\newcommand{\R}{{\mathbb R}}
\newcommand{\eps}{\varepsilon}
\renewcommand{\ge }{\geqslant}
\renewcommand{\geq }{\geqslant}
\renewcommand{\le }{\leqslant}
\renewcommand{\leq }{\leqslant}
\def\neweq#1{\begin{equation}\label{#1}}
\def\endeq{\end{equation}}
\def\eq#1{(\ref{#1})}
\newtheorem{theorem}{Theorem}[section]
\newtheorem{proposition}[theorem]{Proposition}
\newtheorem{lemma}[theorem]{Lemma}
\newtheorem{corollary}[theorem]{Corollary}
\newtheorem{remark}[theorem]{Remark}
\theoremstyle{definition}
\renewcommand{\arraystretch}{1.5}
\begin{document}

\title[variation of frequencies in a rectangular plate]{On the variation of longitudinal and torsional frequencies\\
in a partially hinged rectangular plate}

\author[Elvise BERCHIO]{Elvise BERCHIO}
\address{\hbox{\parbox{5.7in}{\medskip\noindent{Dipartimento di Scienze Matematiche, \\
Politecnico di Torino,\\
        Corso Duca degli Abruzzi 24, 10129 Torino, Italy. \\[3pt]
        \em{E-mail address: }{\tt elvise.berchio@polito.it}}}}}
\author[Davide BUOSO]{Davide BUOSO}
\address{\hbox{\parbox{5.7in}{\medskip\noindent{Dipartimento di Scienze Matematiche, \\
Politecnico di Torino,\\
        Corso Duca degli Abruzzi 24, 10129 Torino, Italy. \\[3pt]
        \em{E-mail address: }{\tt davide.buoso@polito.it}}}}}
\author[Filippo GAZZOLA]{Filippo GAZZOLA}
\address{\hbox{\parbox{5.7in}{\medskip\noindent{Dipartimento di Matematica,\\
Politecnico di Milano,\\
   Piazza Leonardo da Vinci 32, 20133 Milano, Italy. \\[3pt]
        \em{E-mail addresses: }{\tt
          filippo.gazzola@polimi.it}}}}}

\date{\today}

\keywords{Shape variation; eigenvalues; plates; torsional instability; suspension bridges}

\subjclass[2010]{35J40; 35P15; 74K20}

\begin{abstract}
We consider a partially hinged rectangular plate and its normal modes. There are two families of modes, longitudinal and torsional.
We study the variation of the corresponding eigenvalues under domain deformations. We investigate the possibility of finding a shape functional able to
quantify the torsional instability of the plate, namely how prone is the plate to transform longitudinal oscillations into torsional ones. This functional
should obey several rules coming from both theoretical and practical evidences. We show that a simple functional obeying all the required rules does not
exist and that the functionals available in literature are not reliable.
\end{abstract}

\maketitle

\section{Introduction}

We consider a thin rectangular elastic plate $\Omega$ which is hinged on two opposite edges and free on the remaining two edges. Thanks to scaling, we may
restrict our attention to the plate $\Omega=(0,\pi)\times(-\ell,\ell)$ where the width $2\ell$ is assumed to be much smaller than the length $\pi$,
that is, $2\ell\ll\pi$. In this plate we study the following eigenvalue problem
\begin{equation}\label{bridge}
\begin{cases}
\Delta^2 u=\lambda u & \qquad \text{in } \Omega\,, \\
u(0,y)=u_{xx}(0,y)=u(\pi,y)=u_{xx}(\pi,y)=0 & \qquad \text{for } y\in (-\ell,\ell)\,, \\
u_{yy}(x,\pm\ell)+\sigma
u_{xx}(x,\pm\ell)=u_{yyy}(x,\pm\ell)+(2-\sigma)u_{xxy}(x,\pm\ell)=0
& \qquad \text{for } x\in (0,\pi)\, ,
\end{cases}
\end{equation}
where $\sigma$ denotes the Poisson ratio of the material forming the plate. For most elastic materials one has $0<\sigma<0.5$;
since we aim to model the deck of a bridge, which is a mixture of concrete and steel, we will take $\sigma=0.2$. The boundary conditions on the short
edges tell that the deck is hinged and this models the connection of the deck with the ground; these conditions are named after Navier since their first
appearance in \cite{navier}. The boundary conditions on the large edges model the fact that the deck is free to move vertically and they
may also be written in the equivalent form (cf.\ \cite{chasman})
\neweq{equivalent}
(1-\sigma)\frac{\partial^2 u}{\partial\nu^2}+\sigma\Delta u=\frac{\partial\Delta u}{\partial\nu}
+(1-\sigma)\mathrm{div}_{\partial\Omega}(\nu\cdot D^2u)_{\partial\Omega}=0\qquad\mbox{on }(0,\pi)\times\{-\ell,\ell\}.
\endeq

In two recent papers \cite{bfg,fergaz}, the eigenvalue problem \eq{bridge} was analyzed in order to study the stability of suspension bridges. It turns out
that these bridges, as well as any elastic plate modeled by \eq{bridge}, display two kinds of oscillations: the longitudinal and the torsional ones.
In Section \ref{longtors} we emphasize their classification and the corresponding shapes of oscillations (the related eigenfunctions, that is, the normal modes).
We also compute numerically some of the eigenvalues.\par
There is a growing interest of engineers on the shape optimization for the design of bridges and, in particular, on the sensitivity analysis of certain
eigenvalue problems, see \cite[Chapter 6]{jhnm}. As pointed out by Banerjee \cite{banerjee}, {\em the free vibration analysis is a fundamental
pre-requisite before carrying out a flutter analysis}. Whence, since the eigenvalues of \eq{bridge} are the squared frequencies of the normal
modes of $\Omega$, in order to improve the stability of the plate one has to analyze the behavior of the eigenvalues with respect to
perturbations of the domain $\Omega$; since we have in mind the application to bridges, we cannot vary the two short edges of the plate and, therefore,
we will consider domain variations which do not preserve its area. Classical references for the behavior of
the eigenvalues of an elliptic operator under domain perturbation are \cite{bucur,delfour,henrot,henrotpierre,henry,sok}.
In Theorem \ref{implicitfunction} we establish the explicit formula for the derivatives of the eigenvalues when the perturbation of $\Omega$ merely
consists in varying its width $2\ell$; in fact, for later use, we prove a result allowing to compute the derivative of any smooth function
depending on couples of eigenvalues. Concerning shape deformations leading to geometries different from a rectangle, a major difficulty for problem
\eq{bridge} is that it is a fourth order equation and this fact usually yields very complicated formulas for the derivatives of the eigenvalues,
in particular the boundary condition \eq{equivalent} is very delicate: in Theorem \ref{duesetted} we overcome this difficulty by taking advantage of
recent results obtained in his doctoral dissertation by the second author \cite{buosotesi}.\par
Then we apply these results to a stability problem. The most dangerous oscillations for the deck of a bridge, leading to fractures and collapses,
are the torsional ones and the target of engineers is to find possible ways to prevent their appearance; we refer to
Section \ref{thresholds} for detailed explanations. We investigate the possibility of defining a domain functional able to quantify how much a plate is prone to
transform longitudinal oscillations into torsional ones. Such a functional should depend on the particular couple of (longitudinal,torsional) modes
considered, see \cite[Section 2.5]{banerjee}.\par
Together with some colleagues, the first and the third author made several attempts to
mathematically describe the torsional instability for different bridge models, see \cite{bbfg,bfg,elvisefilippo,bgz} and also \cite{bookgaz} for a
survey of the available results. The conclusion of these works is that the instability depends not only on the couple of torsional and
longitudinal frequencies involved but also on the amount of energy present within the structure. This result is reached by studying some second order 2-DOF
Hamiltonian systems that naturally arise while approximating the PDE modeling the dynamics of the deck.
For this reason, in Section \ref{criticalenergy} we study some prototype Hamiltonian systems and we compute their energy thresholds for stability.
As far as we are aware, there is no similar systematic study in literature. It turns out that the energy threshold is very sensitive to the (nonlinear) coupling terms and
it appears very hard to derive a general rule. However, the studied Hamiltonian systems also share a common feature: their energy threshold
only depends on the {\em ratio} of the two frequencies considered.\par
In Section \ref{nostro}, we combine the above facts with some ``axioms'' from the engineering literature, that is, some fundamental properties that are to be expected from bridges and
that are obtained either from experimental tests or from actual bridges.
Then, by exploiting the explicit form of the shape
derivatives previously found in Section \ref{dompert}, we investigate whether some shape functionals fit the obtained requirements. These functionals do not simply depend on the eigenvalues but also on the shape of the domain:
in these situations, the analysis usually requires a suitable combination of variational methods from shape optimization and numerical methods, see
e.g.\ \cite{antunes,henrotoudet,ksw,oudet} for some examples. We show that some of these functionals partially fulfill the basic rules
of a shape functional aiming to compute the energy threshold for the torsional stability. But none of these functionals works for all the
couples of (longitudinal,torsional) eigenvalues. Our conclusion is that there exists no general and simple shape functional obeying all these rules and able to
measure the stability of bridges.\par
In Section \ref{flutter} we quickly survey a part of the engineering literature where one can find some attempts to derive thresholds for the stability
of the deck. By applying again Theorems \ref{implicitfunction} and \ref{duesetted},
we show that also the most popular formula used to compute the energy threshold does not satisfy the fundamental properties observed in actual bridges.
Sections \ref{proof1}-\ref{lemmass} are devoted to the proofs of the results stated in Sections \ref{longtors} and \ref{dompert} while in Section \ref{conc} we summarize the results and we draw our conclusions.

\section{Longitudinal and torsional eigenfunctions and eigenvalues}\label{longtors}

The natural functional space where to study problem \eq{bridge} is
$$
H^2_*(\Omega)=\big\{u\in H^2(\Omega): u=0\mathrm{\ on\ }\{0,\pi\}\times(-\ell,\ell)\big\},
$$
that we endow with the scalar product and corresponding norm
\begin{equation}\label{neumannform1}
(u,v)=\int_\Omega (1-\sigma) D^2u:D^2v+\sigma\Delta u\Delta v\, dA\qquad\forall u,v\in H^2_*(\Omega)\ ,\qquad\|u\|^2=(u,u)\,,
\end{equation}
where $D^2u:D^2v=u_{xx}v_{xx}+2u_{xy}v_{xy}+u_{yy}v_{yy}$ and $dA$ denotes the area element.
Then problem (\ref{bridge}) may also be formulated in the following weak sense
\begin{equation*}
\int_\Omega (1-\sigma) D^2u:D^2v+\sigma\Delta u\Delta v dA=\lambda\int_{\Omega}uv dA\qquad\forall v\in H^2_*(\Omega).
\end{equation*}

First we slightly improve \cite[Theorem 7.6]{fergaz} (see also
\cite[Proposition 3.1]{bfg}) by showing that the eigenfunctions of \eq{bridge} may have one of the forms listed below.

\begin{theorem}\label{eigenvalue}
The set of eigenvalues of \eqref{bridge} may be ordered in an increasing sequence of strictly positive numbers diverging to $+\infty$ and any eigenfunction belongs to
$C^\infty(\overline\Omega)$; the set of eigenfunctions of \eqref{bridge} is a complete system in $H^2_*(\Omega)$. Moreover:\par\noindent
$(i)$ for any $m\ge1$, there exists a unique eigenvalue $\lambda=\mu_{m,1}\in((1-\sigma^2)m^4,m^4)$ with corresponding eigenfunction
$$\left[\big[\mu_{m,1}^{1/2}-(1-\sigma)m^2\big]\, \tfrac{\cosh\Big(y\sqrt{m^2+\mu_{m,1}^{1/2}}\Big)}{\cosh\Big(\ell\sqrt{m^2+\mu_{m,1}^{1/2}}\Big)}+
\big[\mu_{m,1}^{1/2}+(1-\sigma)m^2\big]\, \tfrac{\cosh\Big(y\sqrt{m^2-\mu_{m,1}^{1/2}}\Big)}{\cosh\Big(\ell\sqrt{m^2-\mu_{m,1}^{1/2}}\Big)}\right]\sin(mx)\, ;$$
$(ii)$ for any $m\ge1$ and any $k\ge2$ there exists a unique eigenvalue $\lambda=\mu_{m,k}>m^4$ satisfying
$\left(m^2+\frac{\pi^2}{\ell^2}\left(k-\frac{3}{2}\right)^2\right)^2<\mu_{m,k}<\left(m^2+\frac{\pi^2}{\ell^2}\left(k-1\right)^2\right)^2$
and with corresponding eigenfunction
$$
\left[\big[\mu_{m,k}^{1/2}-(1-\sigma)m^2\big]\, \tfrac{\cosh\Big(y\sqrt{\mu_{m,k}^{1/2}+m^2}\Big)}{\cosh\Big(\ell\sqrt{\mu_{m,k}^{1/2}+m^2}\Big)}
+\big[\mu_{m,k}^{1/2}+(1-\sigma)m^2\big]\, \tfrac{\cos\Big(y\sqrt{\mu_{m,k}^{1/2}-m^2}\Big)}{\cos\Big(\ell\sqrt{\mu_{m,k}^{1/2}-m^2}\Big)}\right]\sin(mx)\, ;
$$
$(iii)$ for any $n\ge1$ and any $j\ge2$ there exists a unique eigenvalue $\lambda=\nu_{n,j}>n^4$ with corresponding eigenfunctions
$$
\left[\big[\nu_{n,j}^{1/2}-(1-\sigma)n^2\big]\, \tfrac{\sinh\Big(y\sqrt{\nu_{n,j}^{1/2}+n^2}\Big)}{\sinh\Big(\ell\sqrt{\nu_{n,j}^{1/2}+n^2}\Big)}
+\big[\nu_{n,j}^{1/2}+(1-\sigma)n^2\big]\, \tfrac{\sin\Big(y\sqrt{\nu_{n,j}^{1/2}-n^2}\Big)}{\sin\Big(\ell\sqrt{\nu_{n,j}^{1/2}-n^2}\Big)}\right]\sin(nx)\, ;
$$
$(iv)$ for any $n\ge1$ satisfying $\ell n\sqrt 2\, \coth(\ell n\sqrt2 )>\left(\frac{2-\sigma}{\sigma}\right)^2$ there exists a unique
eigenvalue $\lambda=\nu_{n,1}\in(\mu_{n,1},n^4)$ with corresponding eigenfunction
$$\left[\big[\nu_{n,1}^{1/2}-(1-\sigma)n^2\big]\, \tfrac{\sinh\Big(y\sqrt{n^2+\nu_{n,1}^{1/2}}\Big)}{\sinh\Big(\ell\sqrt{n^2+\nu_{n,1}^{1/2}}\Big)}
+\big[\nu_{n,1}^{1/2}+(1-\sigma)n^2\big]\, \tfrac{\sinh\Big(y\sqrt{n^2-\nu_{n,1}^{1/2}}\Big)}{\sinh\Big(\ell\sqrt{n^2-\nu_{n,1}^{1/2}}\Big)}\right]
\sin(nx)\, .$$
Finally, if the unique positive solution $s>0$ of the equation
\neweq{iff}
\tanh(\sqrt{2}s\ell)=\left(\frac{\sigma}{2-\sigma}\right)^2\, \sqrt{2}s\ell
\endeq
is not an integer, then the only eigenvalues and eigenfunctions are the ones given in $(i)-(iv)$.
\end{theorem}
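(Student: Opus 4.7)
The plan is to use separation of variables, reducing the PDE to a family of fourth-order ODEs in $y$, and then to analyze carefully the transcendental dispersion equations that arise from the free-edge conditions. A preliminary spectral step is easy: the bilinear form in \eqref{neumannform1} defines a scalar product equivalent to the $H^2$-norm on $H^2_*(\Omega)$, the embedding $H^2_*(\Omega)\hookrightarrow L^2(\Omega)$ is compact, hence the resolvent of $\Delta^2$ understood in the weak sense of \eqref{bridge} is a compact self-adjoint operator on $L^2(\Omega)$, giving a divergent sequence of strictly positive eigenvalues together with a complete orthonormal system of eigenfunctions. Smoothness on $\overline\Omega$ follows from interior elliptic regularity and boundary regularity on the smooth pieces of $\partial\Omega$; at the four corners the Navier conditions are compatible with a smooth extension by odd reflection across $x=0,\pi$, so no corner singularity appears.

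The Navier conditions at $x=0,\pi$ allow any eigenfunction $u$ to be expanded as $u(x,y)=\sum_{m\ge 1}\varphi_m(y)\sin(mx)$, and $\Delta^2 u=\lambda u$ decouples into the ODEs
\begin{equation*}
\varphi_m^{(4)}(y)-2m^2\varphi_m''(y)+(m^4-\lambda)\varphi_m(y)=0,\qquad y\in(-\ell,\ell),
\end{equation*}
subject to the free-edge conditions read off from \eqref{bridge}. Since those conditions are invariant under $y\mapsto -y$, the solution space splits into an even subspace (the $\mu_{m,k}$-family) and an odd subspace (the $\nu_{n,j}$-family). The characteristic polynomial $r^4-2m^2 r^2+(m^4-\lambda)=0$ has roots $r^2=m^2\pm\sqrt\lambda$; according to the sign of $m^2-\sqrt\lambda$ one obtains the four explicit shapes in (i)--(iv), which combine $\cosh$ with $\cosh$ or $\cos$ in the even case, and $\sinh$ with $\sinh$ or $\sin$ in the odd case.

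The real content of the theorem is the analysis of the dispersion equations obtained by imposing the two free-edge conditions on each of these explicit forms. After eliminating the two arbitrary constants one is left with a single equation $F_m(\lambda)=0$ per class; the claims in (i)--(iv) amount to existence and uniqueness of its roots in the stated intervals. In cases (i) and (iv), where $\lambda<m^4$ (respectively $\lambda<n^4$), the factor $\mu^{1/2}-(1-\sigma)m^2$ vanishes at the lower endpoint $\lambda=(1-\sigma^2)m^4$, producing a definite sign of $F_m$ there; a limit computation gives the opposite sign at $\lambda=m^4$, and monotonicity isolates the unique root $\mu_{m,1}$. In cases (ii) and (iii), for $\lambda>m^4$, the trigonometric factor is singular at thresholds of the form $\lambda=(m^2+\pi^2 k^2/\ell^2)^2$; trapping roots between consecutive singularities of $\cos(\ell\sqrt{\mu^{1/2}-m^2})$ (respectively $\sin$) yields the brackets displayed, and monotonicity on each bracket gives uniqueness. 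The additional condition on $n$ in (iv) is precisely the quantitative criterion for $F_n$ to change sign on $(\mu_{n,1},n^4)$, so that an odd eigenvalue below $n^4$ actually exists.

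The main obstacle is this last analysis: tracking signs and monotonicity of transcendental functions built from $\cosh$, $\cos$, $\sinh$, $\sin$ mixed with polynomial factors in $\sqrt\lambda$ and $m^2$, where neither an elementary convexity argument nor a single inequality suffices. The final clause involving \eqref{iff} concerns the degenerate value $\lambda=n^4$: there one characteristic root vanishes, the ODE acquires a polynomial solution in $y$ in addition to the standard $\sinh$ solution, and imposing the free-edge conditions on this polynomial reduces exactly to \eqref{iff}; excluding an integer solution $s$ therefore rules out any eigenvalue missed by the four families.
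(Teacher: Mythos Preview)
Your overall architecture (compact resolvent, separation of variables, even/odd split, dispersion equations) is correct and matches what the paper relies on --- but note that the paper does \emph{not} reprove any of that. It cites \cite{bfg,fergaz} for everything except two specific sharpenings: the lower bound $(1-\sigma^2)m^4$ in~(i), and the bracketing bounds in~(ii). Those two items are the only things actually proved in Section~\ref{proof1}, and your proposal handles precisely those two items incorrectly.

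For (i), you write that the factor $\mu^{1/2}-(1-\sigma)m^2$ vanishes at the lower endpoint $\lambda=(1-\sigma^2)m^4$. It does not: that factor vanishes at $\lambda=(1-\sigma)^2m^4$, which is the \emph{old} lower bound from \cite{fergaz}. Your sign argument therefore only recovers $(1-\sigma)^2m^4<\mu_{m,1}$, strictly weaker than the stated $(1-\sigma^2)m^4<\mu_{m,1}$. The paper's argument is different and not captured by your outline: one evaluates $\Phi^m$ at $\lambda=(1-\sigma^2)m^4$, where after cancellations the sign reduces to
\[
\frac{\tanh\big(\ell m\sqrt{1-(1-\sigma^2)^{1/2}}\big)}{\sqrt{1-(1-\sigma^2)^{1/2}}}
-\frac{\tanh\big(\ell m\sqrt{1+(1-\sigma^2)^{1/2}}\big)}{\sqrt{1+(1-\sigma^2)^{1/2}}}>0,
\]
and this follows from the monotonicity of $s\mapsto\tanh(s)/s$. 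This is the key idea you are missing.

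For (ii), trapping between consecutive poles of $\tan$ would give an upper bound of the form $(m^2+\tfrac{\pi^2}{\ell^2}(k-\tfrac12)^2)^2$, not the sharper $(m^2+\tfrac{\pi^2}{\ell^2}(k-1)^2)^2$ in the statement. The paper observes instead that at $\ell\sqrt{\lambda^{1/2}-m^2}=(k-1)\pi$ the tangent term in $\Upsilon^m$ vanishes while the $\tanh$ term is positive, so $\Upsilon^m>0$ already there; combined with $\Upsilon^m\to-\infty$ as one leaves the pole at $(k-\tfrac32)\pi$ from the right, and monotonicity on that interval, this pins down the tighter bracket.
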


A sketch of the proof of the improved bounds in Theorem \ref{eigenvalue} is given in Section \ref{proof1}. Condition \eq{iff} has probability 0 to occur in
general plates; if it occurs, there is an additional eigenvalue and eigenfunction, see \cite{fergaz}. From \cite{bfg,fergaz} we recall that the eigenvalues are solutions of explicit equations.

\begin{proposition}\label{defunzioni}
Let{\small
$$
\Phi^m(\lambda,\ell):=\sqrt{m^2\!-\!\lambda^{1/2}}\big(\lambda^{1/2}\!+\!(1\!-\!\sigma)m^2\big)^2\tanh(\ell\sqrt{m^2\!-\!\lambda^{1/2}})\!
-\!\sqrt{m^2\!+\!\lambda^{1/2}}\big(\lambda^{1/2}\!-\!(1\!-\!\sigma)m^2\big)^2\tanh(\ell\sqrt{m^2\!+\!\lambda^{1/2}}),
$$
$$
\Upsilon^m(\lambda,\ell):=\sqrt{\lambda^{1/2}\!-\!m^2}\big(\lambda^{1/2}\!+\!(1\!-\!\sigma)m^2\big)^2\tan(\ell\sqrt{\lambda^{1/2}\!-\!m^2})\!+
\!\sqrt{\lambda^{1/2}\!+\!m^2}\big(\lambda^{1/2}\!-\!(1\!-\!\sigma)m^2\big)^2\tanh(\ell\sqrt{\lambda^{1/2}\!+\!m^2}),
$$
$$
\Psi^n(\lambda,\ell):=\sqrt{\lambda^{1/2}\!-\!n^2}\big(\lambda^{1/2}\!+\!(1\!-\!\sigma)n^2\big)^2\tanh(\ell\sqrt{\lambda^{1/2}\!+\!n^2})\!-
\!\sqrt{\lambda^{1/2}\!+\!n^2}\big(\lambda^{1/2}\!-\!(1\!-\!\sigma)n^2\big)^2\tan(\ell\sqrt{\lambda^{1/2}\!-\!n^2}),
$$
$$
\Gamma^n(\lambda,\ell):=\sqrt{n^2\!-\!\lambda^{1/2}}\big(\lambda^{1/2}\!+\!(1\!-\!\sigma)n^2\big)^2\tanh(\ell\sqrt{\lambda^{1/2}\!+\!n^2})\!-
\!\sqrt{\lambda^{1/2}\!+\!n^2}\big(\lambda^{1/2}\!-\!(1\!-\!\sigma)n^2\big)^2\tanh(\ell\sqrt{\lambda^{1/2}\!-\!n^2}).
$$}
Then:\par\noindent
$(i)$ the eigenvalue $\lambda=\mu_{m,1}$ is the unique value $\lambda\in((1-\sigma^2)m^4,m^4)$ such that $\Phi^m(\lambda,\ell)=0$;\par\noindent
$(ii)$ the eigenvalues $\lambda=\mu_{m,k}$ ($k\ge2$) are the solutions $\lambda>m^4$ of the equation $\Upsilon^m(\lambda,\ell)=0$;\par\noindent
$(iii)$ the eigenvalues $\lambda=\nu_{n,j}$ ($j\ge2$) are the solutions $\lambda>n^4$ of the equation $\Psi^n(\lambda,\ell)=0$;\par\noindent
$(iv)$ the eigenvalue $\lambda=\nu_{n,1}$ is the unique value $\lambda\in((1-\sigma^2)n^4,n^4)$ such that $\Gamma^n(\lambda,\ell)=0$.
\end{proposition}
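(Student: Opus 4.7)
The plan is to derive each of the four transcendental equations by inserting the explicit eigenfunctions from Theorem \ref{eigenvalue} into the remaining free-edge boundary condition. Since the Navier conditions at $x=0,\pi$ are automatic for any $\sin(mx)$ factor, and since the coefficients in the eigenfunctions listed in Theorem \ref{eigenvalue} are chosen precisely so that the first free-edge condition $u_{yy}+\sigma u_{xx}=0$ is fulfilled at $y=\pm\ell$, the content of Proposition \ref{defunzioni} reduces to showing that the second condition $u_{yyy}+(2-\sigma)u_{xxy}=0$ collapses into $\Phi^m=0$, $\Upsilon^m=0$, $\Psi^n=0$ or $\Gamma^n=0$ in the four respective cases.

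First I would write $u(x,y)=f(y)\sin(mx)$; substituting into $\Delta^2 u=\lambda u$ yields the ODE $f''''-2m^2 f''+(m^4-\lambda)f=0$, whose characteristic roots are $\pm\sqrt{m^2\pm\sqrt\lambda}$. Splitting by parity in $y$ (even, producing the longitudinal modes, versus odd, producing the torsional ones) and by the sign of $m^2-\sqrt\lambda$ (which decides whether the second pair of roots is real or purely imaginary, i.e.\ hyperbolic or trigonometric) yields exactly the four structural possibilities in Theorem \ref{eigenvalue}. In each case the general ansatz has two free coefficients $A$, $B$, and the two free-edge conditions at $y=\ell$ form a $2\times 2$ homogeneous linear system; the symmetry $y\mapsto -y$ makes the conditions at $y=-\ell$ automatic.

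Concretely in case (ii), writing $f(y)=A\cosh(\alpha y)+B\cos(\beta y)$ with $\alpha=\sqrt{m^2+\sqrt\lambda}$, $\beta=\sqrt{\sqrt\lambda-m^2}$ and using the identities $\alpha^2-\sigma m^2=\sqrt\lambda+(1-\sigma)m^2$, $\beta^2+\sigma m^2=\sqrt\lambda-(1-\sigma)m^2$, $\alpha^2-(2-\sigma)m^2=\sqrt\lambda-(1-\sigma)m^2$, $\beta^2+(2-\sigma)m^2=\sqrt\lambda+(1-\sigma)m^2$, the two boundary conditions read
\begin{align*}
A[\sqrt\lambda+(1-\sigma)m^2]\cosh(\alpha\ell)&-B[\sqrt\lambda-(1-\sigma)m^2]\cos(\beta\ell)=0,\\
A\alpha[\sqrt\lambda-(1-\sigma)m^2]\sinh(\alpha\ell)&+B\beta[\sqrt\lambda+(1-\sigma)m^2]\sin(\beta\ell)=0,
\end{align*}
and the vanishing of the $2\times 2$ determinant is exactly $\Upsilon^m(\lambda,\ell)=0$. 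Cases (i), (iii), (iv) follow the same template: replacing $\cos$ by $\cosh$ (when $\sqrt\lambda<m^2$) or the even pair by its odd counterpart $(\sinh,\sin)$ or $(\sinh,\sinh)$ delivers $\Phi^m$, $\Psi^n$, $\Gamma^n$ verbatim.

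The main obstacle here is not conceptual but purely clerical: the signs coming from $(\cos(\beta y))''=-\beta^2\cos(\beta y)$ versus $(\cosh(\alpha y))''=\alpha^2\cosh(\alpha y)$, together with the bookkeeping of the $\sigma$-dependent coefficients, must be tracked so that the $2\times 2$ determinant collapses into the symmetric factorized form displayed in the definitions of $\Phi^m,\Upsilon^m,\Psi^n,\Gamma^n$. The localization of the unique root in each prescribed interval (e.g.\ $\mu_{m,1}\in((1-\sigma^2)m^4,m^4)$) is really the content of Theorem \ref{eigenvalue} rather than of Proposition \ref{defunzioni}, and would be handled separately by a monotonicity and asymptotic analysis of the four functions in $\lambda$.
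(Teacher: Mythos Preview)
Your derivation is correct and is precisely the standard separation-of-variables argument: the paper does not actually prove Proposition \ref{defunzioni} but simply recalls it from \cite{bfg,fergaz}, where the same computation (substitute $u=f(y)\sin(mx)$, solve the fourth-order ODE for $f$, split by parity and by the sign of $m^2-\sqrt\lambda$, and set the $2\times2$ determinant of the free-edge system to zero) is carried out. Your remark that the interval localizations belong to Theorem \ref{eigenvalue} rather than to this proposition is also consistent with how the paper organizes the material.
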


The eigenfunctions in $(i)-(ii)$ are even with respect to $y$ whereas the eigenfunctions in $(iii)-(iv)$ are odd. We call {\bf longitudinal
eigenfunctions} the eigenfunctions of the kind $(i)-(ii)$ and {\bf torsional eigenfunctions} the eigenfunctions of the kind $(iii)-(iv)$.
Since $\ell$ is small, the former are quite similar to $c_m\sin(mx)$ whereas the latter are similar to $c_ny\sin(nx)$.\par
In the sequel, we consider realistic values of $\sigma$ and $\ell$, as in some actual bridges; we take
\neweq{parameters}
\sigma=0.2\, ,\qquad\ell=\frac{\pi}{150},
\endeq
but very similar results are obtained for values of $\sigma$ and $\ell$ close to \eq{parameters}. This choice of $\ell$ models the case where the
main span of the bridge is $1$ kilometer long and the width $2\ell$ is about $13$ meters.
These values are taken from the original Tacoma Narrows Bridge, see \cite{ammann,bfg}. We denote by
\neweq{denote}
\bar\mu_{m,k}\ \mbox{and}\ \bar\nu_{n,j}\ \mbox{the eigenvalues of \eq{bridge} given in Theorem \ref{eigenvalue} and Proposition
\ref{defunzioni} when \eqref{parameters} holds.}
\endeq
Then, from Theorem \ref{eigenvalue}, we infer that
$$0.96 m^4<\bar\mu_{m,1}<m^4\, ,\qquad(m^2+75^2(2k-3)^2)^2<\bar\mu_{m,k}<(m^2+150^2(k-1)^2)^2\quad\forall k\ge2$$
for all integer $m$.
Furthermore, a direct inspection yields that
\neweq{num1}
\bar\nu_{n,1}\quad \text{does not exist for } 1\leq n\leq 2734\,.
\endeq

In Table \ref{tableigen} we collect some numerical values of $\bar\mu_{m,k}$ and $\bar\nu_{n,j}$ as defined in \eq{denote}.

\begin{table}[ht]
\begin{center}

{\footnotesize
\begin{tabular}{|c|c|c|c|c|c|c|c|c|c|c|c|c|c|}
\hline
$\bar\mu_{1,1}$\!&\!$\bar\mu_{2,1}$\!&\!$\bar\mu_{3,1}$\!&\!$\bar\mu_{4,1}$\!&\!$\bar\mu_{5,1}$\!&\!$\bar\mu_{6,1}$\!&\!$\bar\mu_{7,1}$\!&\!$\bar\mu_{8,1}$\!&\!$\bar\mu_{9,1}$\!&\!$\bar\mu_{10,1}$\!&\!$\bar\mu_{11,1}$\!&\!$\bar\mu_{12,1}$\!&\!$\bar\mu_{13,1}$\!&\!$\bar\mu_{14,1}$
\\
\hline
$0.96$\!&\!$15.36$\!&\!$77.77$\!&\!$245.8$\!&\!$600.14$\!&\!$1244.6$\!&\!$2306.05$\!&\!$3934.57$\!&\!$6303.42$\!&\!$9609.09$\!&\!$14071.4$\!&\!$19933.4$\!&\!$27461.6$\!&\!$36946$\\
\hline
\end{tabular}

\begin{tabular}{|c|c|c|c|c|}
\hline
$\bar\nu_{1,2}$\!&\!$\bar\nu_{2,2}$\!&\!$\bar\nu_{3,2}$\!&\!$\bar\nu_{4,2}$\!&\!$\bar\nu_{5,2}$\\
\hline
$10943.63$\!&\!$43785.82$\!&\!$98560.47$\!&\!$175324.1$\!&\!$274155.8$ \\
\hline
\end{tabular}

\begin{tabular}{|c|c|c|c|c|c|c|c|c|c|c|c|c|c|c|}
\hline
$\bar\mu_{1,2}$\!&\!$\bar\mu_{2,2}$\!&\!$\bar\mu_{3,2}$\!&\!$\bar\mu_{4,2}$\!&\!$\bar\mu_{5,2}$\!&\!$\bar\mu_{6,2}$\!&\!$\bar\mu_{7,2}$\!&\!$\bar\mu_{8,2}$\!&\!$\bar\mu_{9,2}$\!&\! $\bar\mu_{10,2}$\!&\!$\bar\mu_{11,2}$\!&\!$\bar\mu_{12,2}$\!&\!$\bar\mu_{13,2}$\!&\!$\bar\mu_{14,2}$\!&\!$\times$\\
\hline
$1.626$\!&\!$1.628$\!&\!$1.63$\!&\!$1.634$\!&\!$1.638$\!&\!$1.643$\!&\!$1.649$\!&\!$1.657$\!&\!$1.665$\!&\!$1.674$\!&\!$1.684$\!&\!$1.695$\!&\!$1.707$\!&\!$1.72$\!&\!$10^8$\\
\hline
\end{tabular}

\begin{tabular}{|c|c|c|c|c|}
\hline
$\bar\nu_{1,3}$\!&\!$\bar\nu_{2,3}$\!&\!$\bar\nu_{3,3}$\!&\!$\bar\nu_{4,3}$\!&\!$\bar\nu_{5,3}$\\
\hline
$1.2356\!\cdot\!10^9$\!&\!$1.2359\!\cdot\!10^9$\!&\!$1.2365\!\cdot\!10^9$\!&\!$1.2372\!\cdot\!10^9$\!&\!$1.2382\!\cdot\!10^9$\\
\hline
\end{tabular}
}
\caption{Numerical values of some eigenvalues of problem \eq{bridge} when \eq{parameters} holds.}\label{tableigen}
\end{center}
\end{table}

These results are fairly precise and reliable. The ``exact'' value of these eigenvalues will be important in the following sections. Here we just
point out that
\begin{equation}
\label{comparison}
\bar\mu_{1,1}<\dots<\bar\mu_{10,1}<\bar\nu_{1,2}<\bar\mu_{11,1}<\dots<\bar\mu_{14,1}<\bar\nu_{2,2},
\end{equation}
\begin{equation}\label{comparison2}
\bar\nu_{n,2}<\bar\mu_{m,2}<\bar\nu_{n,3}\qquad\mbox{for all }m=1,...,14\mbox{ and }n=1,...,5\, .
\end{equation}

In fact, we considered all the $k=1,2,3,4$, and $j=2,3,4,5$, for $\bar\mu_{m,k}$ and $\bar\nu_{n,j}$ with $m\le14$ and $n\le5$. Let us briefly
summarize what we observed numerically.\par
$\bullet$ The map $m\mapsto\bar\mu_{m,1}$ is strictly increasing and $0.96<\bar\mu_{m,1}<36946.004$ for $m=1,...,14$.\par
$\bullet$ The map $m\mapsto\bar\mu_{m,2}$ is strictly increasing and $1.62\cdot10^8<\bar\mu_{m,2}<1.721\cdot10^8$ for $m=1,...,14$.\par
$\bullet$ The map $m\mapsto\bar\mu_{m,3}$ is strictly increasing and $4.74\cdot10^9<\bar\mu_{m,3}<4.786\cdot10^9$ for $m=1,...,14$.\par
$\bullet$ The map $m\mapsto\bar\mu_{m,4}$ is strictly increasing and $2.895\cdot10^{10}<\bar\mu_{m,4}<2.904\cdot10^{10}$ for $m=1,...,14$.\par
$\bullet$ The map $n\mapsto\bar\nu_{n,2}$ is strictly increasing and $10943.6<\bar\nu_{n,2}<274155.9$ for $n=1,...,5$.\par
$\bullet$ The map $n\mapsto\bar\nu_{n,3}$ is strictly increasing and $1.235\cdot10^9<\bar\nu_{n,3}<1.239\cdot10^9$ for $n=1,...,5$.\par
$\bullet$ The map $n\mapsto\bar\nu_{n,4}$ is strictly increasing and $1.297\cdot10^{10}<\bar\nu_{n,4}<1.299\cdot10^{10}$ for $n=1,...,5$.\par
$\bullet$ The map $n\mapsto\bar\nu_{n,5}$ is strictly increasing and $5.648\cdot10^{10}<\bar\nu_{n,5}<5.65\cdot10^{10}$ for $n=1,...,5$.\par\bigskip

In terms of the frequencies (the square roots of the eigenvalues) the above observations show that
\begin{equation}\label{large freq}
\mbox{\bf the smallest frequencies of the normal modes are those listed in Table \ref{tableigen}.}
\end{equation}
These facts explain why we mainly restricted our attention to the eigenvalues in Table \ref{tableigen} (i.e. $k=1,2$ and $j=2,3$). Moreover, the eigenvalues $\bar\mu_{m,2}$ are much bigger than the eigenvalues $\bar\mu_{m,1}$, and this translates in larger frequencies. This means that a bigger amount of energy is needed in order to trigger the normal modes associated with $\bar\mu_{m,2}$, so that it is quite unlikely to observe them. The same remark holds also for $\bar\nu_{n,2},\bar\nu_{n,3}$. \par
Note that the restrictions $m\le14$ and $n\le5$ are not just motivated by the lack of space in this paper but also by the behavior in actual bridges; at the collapsed
Tacoma Narrows Bridge the longitudinal oscillations appeared with at most ten nodes and the torsional oscillation appeared with one node, see \cite{ammann} and Section \ref{conc} below.\par
Finally, by \eq{num1} we know that the torsional eigenvalues $\bar\nu_{n,1}$ do not exist for $n\leq 2734$, while for $n\geq 2735$ the frequencies are very large.

\section{Domain perturbations and variation of the frequencies}\label{dompert}

The aim of this section is to study the variation of the longitudinal and torsional frequencies when the
rectangular plate $\Omega$ changes width or, more generally, shape.\par
We start by considering the effect of the variation of the width: we assume \eq{parameters} and we use the notations \eq{denote}.
By the Implicit Function Theorem, the relation $\Phi^m(\lambda,\ell)=0$ implicitly defines, in a neighborhood $U$ of $\ell=\frac{\pi}{150}$, a
smooth function $\mu_{m,1}=\mu_{m,1}(\ell)$ such that
$$\mu_{m,1}\left(\frac{\pi}{150}\right)=\bar\mu_{m,1}\, ,\qquad\Phi^m\big(\mu_{m,1}(\ell),\ell\big)=0\quad\forall\ell\in U\, .$$
Similarly, we define the smooth functions $\mu_{m,k}=\mu_{m,k}(\ell)$, $\nu_{n,1}=\nu_{n,1}(\ell)$, $\nu_{n,j}=\nu_{n,j}(\ell)$.
Then, we further exploit the Implicit Function Theorem to derive the following

\begin{theorem}\label{implicitfunction}
Let $\sigma=0.2$ and let $\mu_{m,k}(\ell)$ and $\nu_{n,j}(\ell)$ be the functions defined above. Furthermore, let $\Phi^m,\Upsilon^m,\Psi^n,\Gamma^n$ be
as defined in Proposition \ref{defunzioni} and $\Phi^m_\ell$ and $\Phi^m_\lambda$ denote the partial derivatives of $\Phi^m$ and similarly for $\Upsilon^m,\Psi^n$ and $\Gamma^n$. If $(\mu,\nu)\in \R^2 \mapsto f(\mu,\nu)\in \R$ is a differentiable map, then for all positive integers $m,k,n,j$ the functions
$$\ell\mapsto f(\mu_{m,k}(\ell),\nu_{n,j}(\ell))$$
are differentiable and their derivatives for $\ell=\pi/150$ are given by (here $j,k\ge2$)

$$\frac{d}{d \ell}\left(f(\mu_{m,1}(\ell),\nu_{n,1}(\ell))\right)(\tfrac{\pi}{150})=
-f_{\mu}(\bar\mu_{m,1},\bar\nu_{n,1})\frac{\Phi^m_\ell(\bar\mu_{m,1},\frac{\pi}{150})}{\Phi^m_\lambda(\bar\mu_{m,1},\frac{\pi}{150})}-
f_{\nu}(\bar\mu_{m,1},\bar\nu_{n,1})\frac{\Gamma^n_\ell(\bar\nu_{n,1},\frac{\pi}{150})}{\Gamma^n_\lambda(\bar\nu_{n,1},\frac{\pi}{150})},$$

$$\frac{d}{d \ell}\left(f(\mu_{m,k}(\ell),\nu_{n,1}(\ell))\right)(\tfrac{\pi}{150})=
-f_{\mu}(\bar\mu_{m,k},\bar\nu_{n,1})\frac{\Upsilon^m_\ell(\bar\mu_{m,k},\frac{\pi}{150})}{\Upsilon^m_\lambda(\bar\mu_{m,k},\frac{\pi}{150})}-
f_{\nu}(\bar\mu_{m,k},\bar\nu_{n,1})\frac{\Gamma^n_\ell(\bar\nu_{n,1},\frac{\pi}{150})}{\Gamma^n_\lambda(\bar\nu_{n,1},\frac{\pi}{150})},$$

$$\frac{d}{d \ell}\left(f(\mu_{m,1}(\ell),\nu_{n,j}(\ell))\right)(\tfrac{\pi}{150})=
-f_{\mu}(\bar\mu_{m,1},\bar\nu_{n,j})\frac{\Phi^m_\ell(\bar\mu_{m,1},\frac{\pi}{150})}{\Phi^m_\lambda(\bar\mu_{m,1},\frac{\pi}{150})}-
f_{\nu}(\bar\mu_{m,1},\bar\nu_{n,j})\frac{\Psi^n_\ell(\bar\nu_{n,j},\frac{\pi}{150})}{\Psi^n_\lambda(\bar\nu_{n,j},\frac{\pi}{150})},$$

$$\frac{d}{d \ell}\left(f(\mu_{m,k}(\ell),\nu_{n,j}(\ell))\right)(\tfrac{\pi}{150})=
-f_{\mu}(\bar\mu_{m,k},\bar\nu_{n,j})\frac{\Upsilon^m_\ell(\bar\mu_{m,k},\frac{\pi}{150})}{\Upsilon^m_\lambda(\bar\mu_{m,k},\frac{\pi}{150})}-
f_{\nu}(\bar\mu_{m,k},\bar\nu_{n,j})\frac{\Psi^n_\ell(\bar\nu_{n,j},\frac{\pi}{150})}{\Psi^n_\lambda(\bar\nu_{n,j},\frac{\pi}{150})},$$
where $f_{\mu}$ and $f_{\nu}$ denote the partial derivatives of $f$, $\bar\mu_{m,1}=\mu_{m,1}(\frac{\pi}{150})$, $\bar\mu_{m,k}=\mu_{m,k}(\frac{\pi}{150})$, $\bar\nu_{n,1}=\nu_{n,1}(\frac{\pi}{150})$, $\bar\nu_{n,j}=\nu_{n,j}(\frac{\pi}{150})$, see \eqref{denote} .
\end{theorem}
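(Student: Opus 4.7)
The overall plan is to view the theorem as a clean application of the Implicit Function Theorem combined with the multivariable chain rule, applied to each of the four functions $\Phi^m,\Upsilon^m,\Psi^n,\Gamma^n$ in Proposition \ref{defunzioni}. Since these functions are all smooth in $(\lambda,\ell)$ on the relevant domains (being built from square roots of strictly positive quantities, hyperbolic tangents, and a single tangent of a bounded argument), the only nontrivial hypothesis of the Implicit Function Theorem is the non-vanishing of the partial derivative with respect to $\lambda$ at the eigenvalue being isolated. Once this is checked, the formulas $\mu'_{m,1}(\ell)=-\Phi^m_\ell/\Phi^m_\lambda$, $\mu'_{m,k}(\ell)=-\Upsilon^m_\ell/\Upsilon^m_\lambda$, $\nu'_{n,j}(\ell)=-\Psi^n_\ell/\Psi^n_\lambda$, and $\nu'_{n,1}(\ell)=-\Gamma^n_\ell/\Gamma^n_\lambda$ come at once, and the four displayed identities of the theorem follow by the chain rule
\[
\frac{d}{d\ell}f(\mu(\ell),\nu(\ell))=f_\mu(\mu,\nu)\,\mu'(\ell)+f_\nu(\mu,\nu)\,\nu'(\ell),
\]
evaluated at $\ell=\pi/150$.

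The concrete plan is therefore to treat each family in turn. First, I would fix $m$ (or $n$) and the index $k$ (or $j$), and localize: Theorem \ref{eigenvalue} already shows that $\bar\mu_{m,1}$ is the \emph{unique} root of $\Phi^m(\cdot,\pi/150)=0$ in the interval $((1-\sigma^2)m^4,m^4)$, and analogous uniqueness holds for $\bar\mu_{m,k}$, $\bar\nu_{n,j}$, $\bar\nu_{n,1}$ with respect to $\Upsilon^m,\Psi^n,\Gamma^n$. I would then argue that at each such simple root the corresponding $\lambda$-derivative is nonzero, so that the Implicit Function Theorem yields a smooth branch $\ell\mapsto\mu_{m,k}(\ell)$ or $\ell\mapsto\nu_{n,j}(\ell)$, defined on a neighborhood $U$ of $\pi/150$, that coincides at $\ell=\pi/150$ with the eigenvalue labeled by the same indices and satisfies the relevant implicit equation throughout $U$. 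Matching it with the same labeling of eigenvalues in the perturbed plate (using continuity and the separation of the $\mu$ and $\nu$ branches that follows from parts $(i)$–$(iv)$ of Theorem \ref{eigenvalue}) pins down the function unambiguously.

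Next, granted the differentiability of each scalar branch, I would invoke the chain rule for $f\circ(\mu_{m,k},\nu_{n,j}):U\to\R$, which is legitimate since $f$ is assumed differentiable on $\R^2$ and each coordinate function is smooth on $U$; this yields the four displayed formulas after substituting the implicit-derivative expressions. Because the four cases (combining $k=1$ or $k\ge 2$ with $j=1$ or $j\ge 2$) only differ in which of $\Phi^m,\Upsilon^m$ and which of $\Gamma^n,\Psi^n$ governs the branch, the argument is identical in structure in all four cases and can be written once with a placeholder notation.

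The main obstacle, and really the only nontrivial step, is verifying the non-degeneracy condition $\Phi^m_\lambda(\bar\mu_{m,1},\pi/150)\neq 0$ and its three analogues $\Upsilon^m_\lambda,\Psi^n_\lambda,\Gamma^n_\lambda\neq 0$ at the corresponding eigenvalues. Two routes are available: either a direct sign analysis, using monotonicity of the $\tanh$ and $\tan$ factors in the open intervals isolated in Theorem \ref{eigenvalue} together with the position of the root relative to the turning points of the relevant polynomial factors, or an abstract argument observing that the eigenvalues of \eqref{bridge} are simple in the strip classes $(i)$–$(iv)$ (as follows from the explicit one-parameter form of the eigenfunctions and the uniqueness statements in Theorem \ref{eigenvalue}), which forces each root of $\Phi^m,\Upsilon^m,\Psi^n,\Gamma^n$ to be simple. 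Since the latter route piggybacks on facts already proved in \cite{bfg,fergaz} and summarized in Theorem \ref{eigenvalue}, I would favor it: I would explicitly state the simplicity of the root and deduce that the partial derivatives in $\lambda$ at these roots are nonzero, which is precisely the hypothesis needed for the Implicit Function Theorem, completing the proof.
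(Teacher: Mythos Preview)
Your proposal is correct and follows essentially the same approach as the paper: apply the Implicit Function Theorem to each of $\Phi^m,\Upsilon^m,\Psi^n,\Gamma^n$ to obtain the derivatives of the individual eigenvalue branches, then apply the chain rule to $f$. The paper's own proof is in fact terser than yours---it simply invokes the Implicit Function Theorem and writes down the resulting formula without pausing to justify the non-degeneracy condition $\Phi^m_\lambda\neq 0$ (etc.), which you rightly identify as the only point requiring care; your suggestion to deduce it from the simplicity of the roots established in Theorem~\ref{eigenvalue} is a clean way to fill that gap.
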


In view of \eq{num1}, only the last two derivatives will be useful for our purposes. Furthermore, Theorem \ref{implicitfunction} has the following immediate
consequence.

\begin{corollary}\label{implicitfunctioncorollary}
Under the same assumptions of Theorem \ref{implicitfunction}, for all positive integers $m,k,n,j$ the functions
$$\ell\mapsto \mu_{m,k}(\ell)\quad \text{and}\quad \ell\mapsto \nu_{n,j}(\ell)$$
are differentiable and their derivatives for $\ell=\pi/150$ are given by (here $j,k\ge2$)
$$\frac{d}{d \ell}\left(\mu_{m,1}(\ell)\right)(\tfrac{\pi}{150})=
-\frac{\Phi^m_\ell(\bar\mu_{m,1},\frac{\pi}{150})}{\Phi^m_\lambda(\bar\mu_{m,1},\frac{\pi}{150})}\,,\quad\frac{d}{d \ell}\left(\nu_{n,1}(\ell)\right)(\tfrac{\pi}{150})=-\frac{\Gamma^n_\ell(\bar\nu_{n,1},\frac{\pi}{150})}{\Gamma^n_\lambda(\bar\nu_{n,1},\frac{\pi}{150})}\,,$$

$$\frac{d}{d \ell}\left(\mu_{m,k}(\ell)\right)(\tfrac{\pi}{150})=-\frac{\Upsilon^m_\ell(\bar\mu_{m,k},\frac{\pi}{150})}{\Upsilon^m_\lambda(\bar\mu_{m,k},\frac{\pi}{150})}\,,\quad
\frac{d}{d \ell}\left(\nu_{n,j}(\ell)\right)(\tfrac{\pi}{150})=-\frac{\Psi^n_\ell(\bar\nu_{n,j},\frac{\pi}{150})}{\Psi^n_\lambda(\bar\nu_{n,j},\frac{\pi}{150})},$$
where $\bar\mu_{m,1}=\mu_{m,1}(\frac{\pi}{150})$, $\bar\mu_{m,k}=\mu_{m,k}(\frac{\pi}{150})$, $\bar\nu_{n,1}=\nu_{n,1}(\frac{\pi}{150})$, $\bar\nu_{n,j}=\nu_{n,j}(\frac{\pi}{150})$.
\end{corollary}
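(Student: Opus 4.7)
The plan is to derive Corollary \ref{implicitfunctioncorollary} as an immediate specialization of Theorem \ref{implicitfunction}: the differentiability of the functions $\ell\mapsto \mu_{m,k}(\ell)$ and $\ell\mapsto \nu_{n,j}(\ell)$ was already established (before the statement of Theorem \ref{implicitfunction}) by applying the Implicit Function Theorem to the scalar equations $\Phi^m(\lambda,\ell)=0$, $\Upsilon^m(\lambda,\ell)=0$, $\Gamma^n(\lambda,\ell)=0$, $\Psi^n(\lambda,\ell)=0$ at $\ell=\pi/150$; so only the explicit form of the derivatives has to be extracted.

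First, to obtain the formulas for $\frac{d}{d\ell}\mu_{m,k}(\ell)$ at $\ell=\pi/150$, I would apply Theorem \ref{implicitfunction} with the specific choice $f(\mu,\nu)=\mu$, which is clearly differentiable with $f_\mu\equiv 1$ and $f_\nu\equiv 0$. The four formulas in Theorem \ref{implicitfunction} then collapse to their first summand, splitting into the two cases $k=1$ (giving the $\Phi^m$ quotient) and $k\ge 2$ (giving the $\Upsilon^m$ quotient); the choice of $n$ and $j$ becomes irrelevant because the $\nu$-dependent terms vanish.

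Symmetrically, to obtain the formulas for $\frac{d}{d\ell}\nu_{n,j}(\ell)$ at $\ell=\pi/150$, I would apply Theorem \ref{implicitfunction} with $f(\mu,\nu)=\nu$, so that $f_\mu\equiv 0$ and $f_\nu\equiv 1$. Only the second summand in each of the four displayed identities survives, yielding the $\Gamma^n$ quotient when $j=1$ and the $\Psi^n$ quotient when $j\ge 2$, independently of $m$ and $k$.

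Since Theorem \ref{implicitfunction} is taken as given, there is no genuine obstacle here; the whole content of the corollary is a one-line substitution, and the only thing to check is that the denominators $\Phi^m_\lambda$, $\Upsilon^m_\lambda$, $\Gamma^n_\lambda$, $\Psi^n_\lambda$ evaluated at the corresponding eigenvalues are nonzero, which is precisely the non-degeneracy condition that allowed the Implicit Function Theorem to be invoked in the first place when defining $\mu_{m,k}(\ell)$ and $\nu_{n,j}(\ell)$.
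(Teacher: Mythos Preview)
Your proposal is correct and matches the paper's intent: the corollary is stated there as an ``immediate consequence'' of Theorem \ref{implicitfunction}, and specializing to $f(\mu,\nu)=\mu$ and $f(\mu,\nu)=\nu$ is precisely how one reads off the individual derivatives. In fact, in the paper's proof of Theorem \ref{implicitfunction} the Implicit Function Theorem is applied first to obtain the quotients $-\Phi^m_\ell/\Phi^m_\lambda$, etc., and the general formula for $f$ is then built from these via the chain rule; so your argument and the paper's are the same computation traversed in opposite directions.
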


Now we turn to the effect of the variation of the shape of the plate.
We consider problem (\ref{bridge}) in a family of open sets  parameterized by suitable diffeomorphisms $\xi$
defined on $\Omega$, by maintaining fixed the short edges $\Gamma_1=\{0,\pi\}\times(-\ell,\ell)$. Namely, we set
$$
{\mathcal{A}}_{\Omega }=\biggl\{\xi\in C^2_b(\Omega\, ; {\mathbb{R}}^2 ):\ \inf_{\substack{p_1,p_2\in \Omega \\ p_1\ne p_2}}
\frac{|\xi(p_1)-\xi(p_2)|}{|p_1-p_2|}>0,\ \xi(p)=p\ \forall p\in\Gamma_1 \biggr\},
$$
where $C^2_b(\Omega\, ; {\mathbb{R}}^2 )$ denotes the space of functions from $\Omega $ to ${\mathbb{R}}^2$ of class $C^2$, with bounded derivatives up to
order $2$. We recall that since $\partial\Omega\in C^{0,1}$ then $C^2_b(\Omega\, ; {\mathbb{R}}^2 )\subset C^0(\overline{\Omega}\, ; {\mathbb{R}}^2 )$.
Note that if $\xi \in {\mathcal{A}}_{\Omega }$ then $\xi $ is injective, Lipschitz continuous and $\inf_{\Omega }|{\rm det }\nabla \xi |>0$;
we denote by $\nabla$ both the gradient (vector) of a scalar function and the Jacobian (matrix) of a vector function. Moreover,
$\xi(\Omega )$ is a bounded open set and the inverse map $\xi^{-1}$ belongs to  ${\mathcal{A}}_{\xi(\Omega )}$. Then we define
$$
H^2_*(\xi(\Omega))=\{u\in H^2(\xi(\Omega)): u\circ\xi\in H^2_*(\Omega)\}.
$$

In view of \eq{equivalent} we may write problem (\ref{bridge}) as follows
\neweq{bridge2}
\Delta^2 u\!=\!\lambda u \mbox{ in }\xi(\Omega),\ u\!=\!u_{xx}\!=\!0\mbox{ on }\Gamma_1,\
(1\!-\!\sigma)\frac{\partial^2 u}{\partial\nu^2}+\sigma\Delta u\!=\!\frac{\partial\Delta u}{\partial\nu}
+(1\!-\!\sigma)\mathrm{div}_{\partial\xi(\Omega)}(\nu\cdot D^2u)_{\partial\xi(\Omega)}\!=\!0\mbox{ on }\Gamma_2,
\endeq
where $\Gamma_2=\xi(\partial\Omega\setminus\Gamma_1)$.
We consider problem (\ref{bridge2}) and study the dependence of the eigenvalues $\lambda[\xi (\Omega )]$ on $\xi \in {\mathcal{A}}_{\Omega }$.
We endow the space $C^2_b(\Omega\, ; {\mathbb{R}}^2 )$ with its usual norm $\|f\|_{C^2_b(\Omega\, ;{\mathbb{R}}^2)}=\sup_{|\alpha|\le2,\ p\in\Omega}|D^{\alpha }f(p)|$. Note that ${\mathcal{A}}_{\Omega }$ is an open set in  $C_b^2(\Omega\, ;{\mathbb{R}}^2 )$, see \cite[Lemma~3.11]{lala2004}.
Thus, we may study differentiability and analyticity properties of the maps $\xi \mapsto \lambda[\xi (\Omega )]$ defined for
$\xi\in{\mathcal{A}}_{\Omega }$.
We intend to deform only the free edges of the deck, therefore we investigate deformations of the form
\begin{equation}\label{psi}
\psi(x,y)=\left(x,\tau(x)+y(\delta(x)+1)\right),
\end{equation}
where $\tau,\delta\in C^2[0,\pi]$ are such that $\tau(0)=\tau(\pi)=\delta(0)=\delta(\pi)=0$. Let ${\mathcal{A}}_\lambda=\{\xi\in {\mathcal{A}}_{\Omega };\, \lambda[\xi(\Omega)]\mbox{ is simple}\}$.
Then we have the following result, whose proof is given in Section \ref{dimostrazione}.

\begin{theorem}\label{duesetted}
The set ${\mathcal{A}}_\lambda$ is open in $\mathcal A_{\Omega}$, and the map $\xi\mapsto\lambda[\xi(\Omega)]$ from $\mathcal A_\lambda$
to $\mathbb{R}$ is real analytic.
Moreover, if $v$ is an eigenfunction of $\lambda[\Omega]$ normalized with respect to the scalar product (\ref{neumannform1}), and $\psi$ is as in \eqref{psi}, then we have the following formula for the Fr\'{e}chet differential
\neweq{derivd3}
d|_{\xi={\rm Id}}\lambda[\xi(\Omega)][\psi-{\rm Id}]=2\ell\lambda[\Omega]\int_0^\pi\left((1-\sigma)|D^2v|^2+
\sigma(\Delta v)^2-\lambda[\Omega]v^2\right)|_{y=\ell}\delta(x) dx.
\endeq
\end{theorem}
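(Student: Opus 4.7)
The plan is to combine an abstract analyticity result, obtained via pullback to the fixed domain $\Omega$, with an explicit computation of the shape derivative that exploits the $y$-symmetry of the eigenfunctions of \eqref{bridge}. First, I would use the change of variables $u\mapsto v=u\circ\xi$ to rewrite problem \eqref{bridge2} as an eigenvalue problem on the fixed set $\Omega$, whose bilinear form and mass term depend (through Jacobians, cofactors, and pullbacks of $D^2$ and $\Delta$) analytically on $\xi\in\mathcal{A}_\Omega$. Openness of $\mathcal{A}_\lambda$ and real-analyticity of $\xi\mapsto\lambda[\xi(\Omega)]$ on it then follow from the abstract selfadjoint perturbation theory applied to this family, essentially as developed in \cite{buosotesi} (following the Lamberti--Lanza de Cristoforis approach). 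Simplicity of $\lambda[\Omega]$ is used precisely to isolate a single smooth eigenvalue branch locally.

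For the Fr\'echet differential, I would differentiate the Rayleigh quotient along the curve $\xi_t={\rm Id}+t(\psi-{\rm Id})$, equivalently applying the Hadamard structure theorem. After integration by parts using $\Delta^2 v=\lambda v$ together with the natural boundary condition \eqref{equivalent} on the free edges, and with the Navier conditions on $\Gamma_1$ (on which $\psi-{\rm Id}\equiv 0$ kills every boundary contribution), all interior terms and tangential traces assemble into a single boundary integral
\[
d|_{\xi={\rm Id}}\lambda[\xi(\Omega)][\psi-{\rm Id}]=\lambda[\Omega]\int_{\Gamma_2}\bigl((1-\sigma)|D^2v|^2+\sigma(\Delta v)^2-\lambda[\Omega]v^2\bigr)(V\cdot\nu)\,ds,
\]
where $V=\psi-{\rm Id}=(0,\tau(x)+y\delta(x))$. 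The prefactor $\lambda[\Omega]$ arises from the $H^2_*$-normalization: since $(v,v)=1$, the weak formulation forces $\int_\Omega v^2\,dA=1/\lambda[\Omega]$.

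It then remains to evaluate $V\cdot\nu$ on the two free edges: on $y=\ell$ one gets $\tau(x)+\ell\delta(x)$, while on $y=-\ell$ one gets $-\tau(x)+\ell\delta(x)$. By Theorem \ref{eigenvalue}, every eigenfunction $v$ is either even or odd in $y$, so the density $(1-\sigma)|D^2v|^2+\sigma(\Delta v)^2-\lambda v^2$ is invariant under $y\mapsto -y$. The two boundary contributions therefore combine into $2\ell\int_0^\pi(\cdots)|_{y=\ell}\delta(x)\,dx$, the $\tau$-terms cancel by antisymmetry, and \eqref{derivd3} follows.

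The main obstacle I foresee is the Hadamard formula itself: the free-edge condition \eqref{equivalent} involves the surface divergence $\mathrm{div}_{\partial\Omega}(\nu\cdot D^2 u)_{\partial\Omega}$, and differentiating the pulled-back bilinear form in $\xi$ produces a host of Jacobian and cofactor terms whose boundary traces must be reassembled via intrinsic surface identities before the clean density $(1-\sigma)|D^2v|^2+\sigma(\Delta v)^2-\lambda v^2$ emerges. This is exactly the technical core provided by \cite{buosotesi}; once it is available, the $y$-parity reduction to \eqref{derivd3} is immediate.
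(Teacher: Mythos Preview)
Your proposal is correct and follows essentially the same route as the paper: pull back to the fixed rectangle to obtain an analytic family of compact selfadjoint operators (yielding openness of $\mathcal{A}_\lambda$ and analyticity of the eigenvalue via the Lamberti--Lanza de Cristoforis machinery), invoke the technical differentiation lemma from \cite{buosotesi} to obtain the Hadamard-type boundary density $(1-\sigma)|D^2v|^2+\sigma(\Delta v)^2-\lambda v^2$, and then exploit the $y$-parity of the eigenfunctions from Theorem~\ref{eigenvalue} to cancel the $\tau$-contributions and collapse the two free-edge integrals into \eqref{derivd3}. The only point the paper makes explicit that you leave implicit is that $\Omega$ is merely piecewise smooth, so the integration-by-parts identities from \cite{buosotesi} produce corner terms at the four vertices; these vanish because $\psi-{\rm Id}$ has the special form \eqref{psi} with $\tau,\delta$ vanishing at $0,\pi$.
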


We observe that the derivative in formula (\ref{derivd3}) does not depend on $\tau$. The reason is that $\tau$ acts in ``opposite'' ways on the two long edges. Therefore, from now on we will take $\tau(x)\equiv0$. In this case,
$$
\psi(x,y)-{\rm Id}(x,y)=\left(0,y\ \delta(x)\right), \ \forall (x,y)\in\overline{\Omega}.
$$
In particular, $\pm\ell\delta$ represent the variations of the edges $y=\pm\ell$. For later use, we set
\neweq{phi_def}
\phi(x)=\ell\delta(x)\quad \forall x\in(0,\pi).
\endeq

Since $\phi(0)=\phi(\pi)=0$, we may expand $\phi$ in its Fourier series,
$$
\phi(x)=\sum_{h=1}^\infty a_h\sin(hx),
$$
and analyze the effects term by term.
We have the following corollary, where we consider deformations of the type
$\phi(x)=h\sin(hx)$. We use these deformations (with coefficient $h$) in order to have the same area increment with respect to the original rectangle,
a fact which enables us to compare the results.

\begin{corollary}\label{esplicito}
Under the same hypotheses of Theorem \ref{duesetted}, we have
\neweq{esplicita}
d|_{\xi={\rm Id}}\lambda[\xi(\Omega)][(0,\tfrac{h}{\ell}y\sin(hx))]=2h\lambda[\Omega]\int_0^\pi\left((1-\sigma)|D^2v|^2+
\sigma(\Delta v)^2-\lambda[\Omega]v^2\right)|_{y=\ell}\sin(hx) dx.
\endeq
\end{corollary}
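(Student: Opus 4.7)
The statement is a direct specialization of Theorem \ref{duesetted}, so my plan is to simply recognize the prescribed deformation as a particular instance of the family $\psi$ in \eqref{psi} and substitute into \eqref{derivd3}. Specifically, I would take $\tau \equiv 0$ and $\delta(x) = \frac{h}{\ell}\sin(hx)$ so that
\[
\psi(x,y) - (x,y) = \bigl(0,\,y\delta(x)\bigr) = \Bigl(0,\,\tfrac{h}{\ell}\,y\sin(hx)\Bigr),
\]
matching the test direction in the corollary.

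Before invoking \eqref{derivd3}, I would briefly verify admissibility: $\tau,\delta \in C^2[0,\pi]$, and the required vanishing at the endpoints holds since $\delta(0)=\frac{h}{\ell}\sin 0=0$ and $\delta(\pi)=\frac{h}{\ell}\sin(h\pi)=0$ for every integer $h\ge 1$. Hence $\psi$ is an element of $\mathcal{A}_\Omega$ of the form permitted by \eqref{psi}, and Theorem \ref{duesetted} applies.

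Plugging $\delta(x) = \frac{h}{\ell}\sin(hx)$ into the right-hand side of \eqref{derivd3} yields
\[
2\ell\,\lambda[\Omega]\int_0^\pi \left((1-\sigma)|D^2v|^2+\sigma(\Delta v)^2-\lambda[\Omega]v^2\right)\Big|_{y=\ell} \cdot \tfrac{h}{\ell}\sin(hx)\,dx,
\]
and the factor $\ell$ cancels with $1/\ell$, leaving exactly the expression \eqref{esplicita}. Equivalently, in the notation \eqref{phi_def}, this corresponds to the choice $\phi(x)=h\sin(hx)$, i.e.\ the $h$-th Fourier mode (scaled by $h$ to keep the area increment independent of $h$), which is the normalization the authors have already motivated just before the corollary.

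Since the corollary is a bare substitution into a formula established in the preceding theorem, there is essentially no obstacle: the only thing that might need a word of justification is the admissibility check on $\tau,\delta$, and that is immediate. The substantive content lies entirely in Theorem \ref{duesetted}.
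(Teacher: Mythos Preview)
Your proposal is correct and matches the paper's approach exactly: the corollary is stated without proof precisely because it is the immediate substitution $\tau\equiv 0$, $\delta(x)=\tfrac{h}{\ell}\sin(hx)$ into formula \eqref{derivd3}, and your admissibility check and cancellation of the factor $\ell$ are all that is needed.
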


Note that if $h$ is even, then the derivative \eqref{esplicita} vanishes,
and therefore produces no differences for the shape deformation problem. This can be seen using Lemmas \ref{calcoli} and \ref{torsionali}, coupled with the following elementary equalities
$$
\int_0^\pi\sin^2(mx)\sin(hx)dx=\frac 1 2\int_0^\pi\sin(hx)dx-\frac 1 2 \int_0^\pi\cos(2mx)\sin(hx)dx=\frac{(-1)^{h+1}+1}{2}\frac{4m^2}{h(4m^2-h^2)}
$$
and
$$
\int_0^\pi\cos^2(mx)\sin(hx)dx=\frac 1 2\int_0^\pi\sin(hx)dx+\frac 1 2 \int_0^\pi\cos(2mx)\sin(hx)dx=\frac{(-1)^{h+1}+1}{2}\frac{4m^2-2h^2}{h(4m^2-h^2)}
$$
which are both zero if $h$ is even. We used here the fact that
$$
\int_0^\pi\cos(ax)\sin(bx)dx=b\,\frac{(-1)^{a+b}-1}{a^2-b^2}\qquad\forall a,b\in\mathbb{N},\  a\neq b.
$$
Whence, we will concentrate on odd sines only.

\begin{remark}\label{notation_phi}
When $\Omega$ simply changes width, definition \eq{phi_def} has to be thought with $\phi(x)=1$. We observe that the perturbations $\phi(x)=1$
and $\phi(x)=h\sin (hx)$ should not be compared, since their behavior on the shape is different. Indeed, contrary to the latter, the former does not
preserve the hinged edges. Also, the respective formulas for the derivatives of the eigenvalues are obtained in different ways, see Theorems \ref{implicitfunction} and \ref{duesetted}.
\end{remark}

We conclude this section with some numerical computations of the derivatives of the eigenvalues considered in \eq{comparison}-\eq{comparison2}. For the sake of readability, we denote by $D_h(\cdot)$ the derivative in \eqref{esplicita}, for any eigenvalue (or
combination of eigenvalues). With abuse of notation, we also denote by $D_\ell(\cdot)$ the derivative with respect to the width, as in Theorem \ref{implicitfunction}.

\begin{table}[ht]
\begin{center}
{\tiny
\begin{tabular}{|c|c|c|c|c|c|c|c|c|c|c|c|c|c|c|}
\hline
$ $\!\!&\!\!$\mu_{1,1}$\!\!&\!\!$\mu_{2,1}$\!\!&\!\!$\mu_{3,1}$\!\!&\!\!$\mu_{4,1}$\!\!&\!\!$
\mu_{5,1}$\!\!&\!\!$\mu_{6,1}$\!\!&\!\!$\mu_{7,1}$\!\!&\!\!$\mu_{8,1}$\!\!&\!\!$\mu_{9,1}$\!\!&\!\!$\mu_{10,1}$\!\!&\!\!$\mu_{11,1}$\!\!&\!\!$\mu_{12,1}$\!\!&\!\!$\mu_{13,1}$\!\!&\!\!$\mu_{14,1}$\\
\hline
$D_{\ell}$\!\!&\!\!$89\!\!\cdot\!\!10^{-5}$\!\!&\!\!$57\!\!\cdot\!\!10^{-3}$\!\!&\!\!$65\!\!\cdot\!\!10^{-2}$\!\!&\!\!$3.6$\!\!&\!\!$13.7$\!\!&\!\!$40.8$\!\!&\!\!$102.2$\!\!&\!\!$225.7$\!\!&\!\!$453.2$\!\!&\!\!$843.6$\!\!&\!\!$1476.8$\!\!&\!\!$2457.2$\!\!&\!\!$3917$\!\!&\!\!$6019.6$\\
\hline
$D_1$\!\!&\!\!$19\!\!\cdot\!\!10^{-5}$\!\!&\!\!$31\!\!\cdot\!\!10^{-3}$\!\!&\!\!$39\!\!\cdot\!\!10^{-2}$\!\!&\!\!$2.23$\!\!&\!\!$8.58$\!\!&\!\!$25.6$\!\!&\!\!$64.4$\!\!&\!\!$143$\!\!&\!\!$287$\!\!&\!\!$534$\!\!&\!\!$936$\!\!&\!\!$16\!\!\cdot\!\!10^{2}$\!\!&\!\!$25\!\!\cdot\!\!10^{2}$\!\!&\!\!$38\!\!\cdot\!\!10^{2}$\\
\hline
$D_3$\!\!&\!\!$26\!\!\cdot\!\!10^{-4}$\!\!&\!\!$-57\!\!\cdot\!\!10^{-3}$\!\!&\!\!$13\!\!\cdot\!\!10^{-2}$\!\!&\!\!$1.55$\!\!&\!\!$7.02$\!\!&\!\!$22.5$\!\!&\!\!$58.7$\!\!&\!\!$133$\!\!&\!\!$272$\!\!&\!\!$512$\!\!&\!\!$904$\!\!&\!\!$15\!\!\cdot\!\!10^{2}$\!\!&\!\!$24\!\!\cdot\!\!10^{2}$\!\!&\!\!$37\!\!\cdot\!\!10^{2}$\\
\hline
$D_5$\!\!&\!\!$19\!\!\cdot\!\!10^{-4}$\!\!&\!\!$24\!\!\cdot\!\!10^{-2}$\!\!&\!\!$-1.47$\!\!&\!\!$-0.66$\!\!&\!\!$2.89$\!\!&\!\!$15.0$\!\!&\!\!$45.9$\!\!&\!\!$112$\!\!&\!\!$240$\!\!&\!\!$464$\!\!&\!\!$836$\!\!&\!\!$14\!\!\cdot\!\!10^{2}$\!\!&\!\!$23\!\!\cdot\!\!10^{2}$\!\!&\!\!$36\!\!\cdot\!\!10^{2}$\\
\hline
\end{tabular}

\begin{tabular}{|c|c|c|c|c|c|c|c|c|c|c|c|c|c|c|c|}
\hline
$ $\!\!&\!\!$\mu_{1,2}$\!\!&\!\!$\mu_{2,2}$\!\!&\!\!$\mu_{3,2}$\!\!&\!\!$\mu_{4,2}$\!\!&\!\!$
\mu_{5,2}$\!\!&\!\!$\mu_{6,2}$\!\!&\!\!$\mu_{7,2}$\!\!&\!\!$\mu_{8,2}$\!\!&\!\!$\mu_{9,2}$\!\!&\!\!$\mu_{10,2}$\!\!&\!\!$\mu_{11,2}$\!\!&\!\!$\mu_{12,2}$\!\!&\!\!$\mu_{13,2}$\!\!&\!\!$\mu_{14,2}$\!\!&\!\! $\times\downarrow$\\
\hline
$D_\ell$\!\!&\!\!$-3.106$\!\!&\!\!$-3.107$\!\!&\!\!$-3.109$\!\!&\!\!$-3.113$\!\!&\!\!$-3.117$\!\!&\!\!$-3.122$\!\!&\!\!$-3.128$\!\!&\!\!$-3.135$\!\!&\!\!$-3.142$\!\!&\!\!$-3.151$\!\!&\!\!$-3.16$\!\!&\!\!$-3.17$\!\!&\!\!$-3.18$\!\!&\!\!$-3.19$\!\!&\!\!$10^{10}$\\
\hline
$D_1$\!\!&\!\!$-2.636$\!\!&\!\!$-2.110$\!\!&\!\!$-2.036$\!\!&\!\!$-2.013$\!\!&\!\!$-2.004$\!\!&\!\!$-2.001$\!\!&\!\!$-2.002$\!\!&\!\!$-2.004$\!\!&\!\!$-2.007$\!\!&\!\!$-2.011$\!\!&\!\!$-2.016$\!\!&\!\!$-2.022$\!\!&\!\!$-2.029$\!\!&\!\!$-2.037$\!\!&\!\!$10^{10}$\\
\hline
$D_3$\!\!&\!\!$1.583$\!\!&\!\!$-4.524$\!\!&\!\!$-2.641$\!\!&\!\!$-2.307$\!\!&\!\!$-2.182$\!\!&\!\!$-2.121$\!\!&\!\!$-2.088$\!\!&\!\!$-2.069$\!\!&\!\!$-2.059$\!\!&\!\!$-2.053$\!\!&\!\!$-2.051$\!\!&\!\!$-2.052$\!\!&\!\!$-2.055$\!\!&\!\!$-2.059$\!\!&\!\!$10^{10}$\\
\hline
$D_5$\!\!&\!\!$0.377$\!\!&\!\!$3.522$\!\!&\!\!$-6.488$\!\!&\!\!$-3.257$\!\!&\!\!$-2.650$\!\!&\!\!$-2.409$\!\!&\!\!$-2.286$\!\!&\!\!$-2.215$\!\!&\!\!$-2.171$\!\!&\!\!$-2.143$\!\!&\!\!$-2.125$\!\!&\!\!$-2.114$\!\!&\!\!$-2.107$\!\!&\!\!$-2.104$\!\!&\!\!$10^{10}$\\
\hline
\end{tabular}

\begin{tabular}{|c|c|c|c|c|c|c|c|c|c|c|}
\hline
$ $\!\!&\!\!$\nu_{1,2}$\!\!&\!\!$\nu_{2,2}$\!\!&\!\!$\nu_{3,2}$\!\!&\!\!$\nu_{4,2}$\!\!&\!\!$
\nu_{5,2}$\!\!&\!\!$\nu_{1,3}$\!\!&\!\!$\nu_{2,3}$\!\!&\!\!$\nu_{3,3}$\!\!&\!\!$\nu_{4,3}$\!\!&\!\!$\nu_{5,3}$\\
\hline
$D_\ell$\!\!&\!\!$-10^6$\!\!&\!\!$-42\!\!\cdot\!\!10^5$\!\!&\!\!$-94\!\!\cdot\!\!10^5$\!\!&\!\!$-17\!\!\cdot\!\!10^6$\!\!&\!\!$-26\!\!\cdot\!\!10^6
$\!\!&\!\!$-24\!\!\cdot\!\!10^{10}$\!\!&\!\!$-24\!\!\cdot\!\!10^{10}$\!\!&\!\!$-24\!\!\cdot\!\!10^{10}$\!\!&\!\!$-24\!\!\cdot\!\!10^{10}$\!\!&\!\!$-24\!\!\cdot\!\!10^{10}$\\
\hline
$D_1$\!\!&\!\!$-11\!\!\cdot\!\!10^5$\!\!&\!\!$-30\!\!\cdot\!\!10^5$\!\!&\!\!$-63\!\!\cdot\!\!10^5$\!\!&\!\!$-11\!\!\cdot\!\!10^6$\!\!&\!\!$-17\!\!\cdot\!\!10^6$\!\!&\!\!$
-20\!\!\cdot\!\!10^{10}$\!\!&\!\!$-16\!\!\cdot\!\!10^{10}$\!\!&\!\!$-15\!\!\cdot\!\!10^{10}$\!\!&\!\!$-15\!\!\cdot\!\!10^{10}$\!\!&\!\!$-15\!\!\cdot\!\!10^{10}$\\
\hline
$D_3$\!\!&\!\!$17\!\!\cdot\!\!10^5$\!\!&\!\!$-95\!\!\cdot\!\!10^5$\!\!&\!\!$-10\!\!\cdot\!\!10^6$\!\!&\!\!$-14\!\!\cdot\!\!10^6$\!\!&\!\!$-20\!\!\cdot\!\!10^6$\!\!&\!\!$12\!\!\cdot\!\!10^{10}$\!\!&\!\!$-34\!\!\cdot\!\!10^{10}$\!\!&\!\!$-20\!\!\cdot\!\!10^{10}$\!\!&\!\!$-17\!\!\cdot\!\!10^{10}$\!\!&\!\!$-17\!\!\cdot\!\!10^{10}$\\
\hline
$D_5$\!\!&\!\!$92\!\!\cdot\!\!10^4$\!\!&\!\!$12\!\!\cdot\!\!10^6$\!\!&\!\!$-33\!\!\cdot\!\!10^6$\!\!&\!\!$-24\!\!\cdot\!\!10^6$\!\!&\!\!$-28\!\!\cdot\!\!10^6$\!\!&\!\!$
29\!\!\cdot\!\!10^{9}$\!\!&\!\!$27\!\!\cdot\!\!10^{10}$\!\!&\!\!$-49\!\!\cdot\!\!10^{10}$\!\!&\!\!$-25\!\!\cdot\!\!10^{10}$\!\!&\!\!$-20\!\!\cdot\!\!10^{10}$\\
\hline
\end{tabular}

\par\medskip
\caption{Numerical values of the derivatives of some eigenvalues when \eq{parameters} holds.}\label{tableder}}
\end{center}
\end{table}

\begin{table}[ht]
\begin{center}
{\tiny
\begin{tabular}{|c|c|c|c|c|c|c|c|c|c|c|c|c|c|c|}
\hline
$ $\!\!&\!\!$\mu_{1,1}$\!\!&\!\!$\mu_{2,1}$\!\!&\!\!$\mu_{3,1}$\!\!&\!\!$\mu_{4,1}$\!\!&\!\!$
\mu_{5,1}$\!\!&\!\!$\mu_{6,1}$\!\!&\!\!$\mu_{7,1}$\!\!&\!\!$\mu_{8,1}$\!\!&\!\!$\mu_{9,1}$\!\!&\!\!$\mu_{10,1}$\!\!&\!\!$\mu_{11,1}$\!\!&\!\!$\mu_{12,1}$\!\!&\!\!$\mu_{13,1}$\!\!&\!\!$\mu_{14,1}$\\
\hline
$\frac{\nu_{2,2}}{\mu_{m,1}}$\!\!&\!\!$45609.8$\!\!&\!\!$ 2850.53$\!\!&\!\!$ 563.04$\!\!&\!\!$ 178.14$\!\!&\!\!$ 72.96$\!\!&\!\!$35.18$\!\!&\!\!$ 18.99$\!\!&\!\!$11.13$\!\!&\!\!$ 6.95$\!\!&\!\!$ 4.56$\!\!&\!\!$ 3.11$\!\!&\!\!$ 2.2$\!\!&\!\!$ 1.6$\!\!&\!\!$ 1.18$\\
\hline

\end{tabular}

\begin{tabular}{|c|c|c|c|c|c|c|c|c|c|c|c|c|c|c|c|}
\hline
$ $\!\!&\!\!$\frac{\nu_{2,2}}{\mu_{1,1}}$\!\!&\!\!$\frac{\nu_{2,2}}{\mu_{2,1}}$\!\!&\!\!$\frac{\nu_{2,2}}{\mu_{3,1}}$\!\!&\!\!$\frac{\nu_{2,2}}{\mu_{4,1}}$\!\!&\!\!$\frac{\nu_{2,2}}{\mu_{5,1}}$\!\!&\!\!$\frac{\nu_{2,2}}{\mu_{6,1}}$\!\!&\!\!$\frac{\nu_{2,2}}{\mu_{7,1}}$\!\!&\!\!$\frac{\nu_{2,2}}{\mu_{8,1}}$\!\!&\!\!$\frac{\nu_{2,2}}{\mu_{9,1}}$\!\!&\!\!$\frac{\nu_{2,2}}{\mu_{10,1}}$\!\!&\!\!$\frac{\nu_{2,2}}{\mu_{11,1}}$\!\!&\!\!$\frac{\nu_{2,2}}{\mu_{12,1}}$\!\!&\!\!$\frac{\nu_{2,2}}{\mu_{13,1}}$\!\!&\!\!$\frac{\nu_{2,2}}{\mu_{14,1}}$\!\!&\!\! $\times\downarrow$\\
\hline
$D_\ell$\!\!&\!\!$-4.3\cdot 10^4$\!\!&\!\!$ -2721.2$\!\!&\!\!$ -537.5$\!\!&\!\!$ -170.1$\!\!&\!\!$ -69.6$\!\!&\!\!$ -33.6$\!\!&\!\!$-18.1$\!\!&\!\!$ -10.6$\!\!&\!\!$ -6.6$\!\!&\!\!$ -4.3$\!\!&\!\!$ -3$\!\!&\!\!$ -2.01$\!\!&\!\!$ -1.5$\!\!&\!\!$ -1.1$\!\!&\!\! $10^2$\\
\hline
$D_1$\!\!&\!\!$-31\!\!\cdot\!\!10^3$\!\!&\!\!$-20\!\!\cdot\!\!10^2$\!\!&\!\!$-388$\!\!&\!\!$-123$\!\!&\!\!$-50.3$\!\!&\!\!$-24.2$\!\!&\!\!$-13.1$\!\!&\!\!$-7.67$\!\!&\!\!$-4.79$\!\!&\!\!$-3.14$\!\!&\!\!$-2.15$\!\!&\!\!$-1.51$\!\!&\!\!$-1.10$\!\!&\!\!$-0.82$\!\!&\!\!$10^2$\\
\hline
$D_3$\!\!&\!\!$-99\!\!\cdot\!\!10^3$\!\!&\!\!$-62\!\!\cdot\!\!10^2$\!\!&\!\!$-12\!\!\cdot\!\!10^2$\!\!&\!\!$-387$\!\!&\!\!$-158$\!\!&\!\!$-76.4$\!\!&\!\!$-41.2$\!\!&\!\!$-24.2$\!\!&\!\!$-15.1$\!\!&\!\!$-9.89$\!\!&\!\!$-6.76$\!\!&\!\!$-4.77$\!\!&\!\!$-3.46$\!\!&\!\!$-2.57$\!\!&\!\!$10^2$\\
\hline

\end{tabular}

\par\medskip
\caption{Numerical values of the ratios $\frac{\nu_{2,2}}{\mu_{m,1}}$ and their derivatives when \eq{parameters} holds.}\label{tableder2}}
\end{center}
\end{table}

The numerical values in Tables \ref{tableder} and \ref{tableder2} will be quite useful in the sequel.  We see that
\begin{equation}
\label{osservazione}
\mbox{the derivatives with respect to $\phi=1$ (}D_\ell\mbox{) and to $\phi=\sin x$ (}D_1\mbox{) have the same sign.}
\end{equation}
We also emphasize the following facts.\par
$\bullet$ The absolute value of the derivatives with respect to $\phi=1$ is increasing with respect to $m$, $n$, $k$, and $j$; this is not
fully visible for $\nu_{n,3}$ in Table \ref{tableder} but more precise numerical values confirm this behavior.\par
$\bullet$ When $\phi=1$, the derivatives of $\mu_{m,1}$ are all positive and they are strictly increasing with respect to $m$, whereas
the derivatives of $\mu_{m,2}$ are all negative and they are strictly decreasing with respect to $m$; the latter behavior is also visible
for $\mu_{m,3}$ and $\mu_{m,4}$.\par
$\bullet$ When $\phi=1$, the derivatives of $\nu_{n,j}$ ($j=2,3$) are all negative and they are strictly decreasing with respect to $n$;
this behavior is also visible for $\nu_{n,4}$ and $\nu_{n,5}$.\par
$\bullet$ For $\phi(x)=3\sin3x,\, 5\sin5x\, ,$ the derivatives do not display a clear behavior neither with respect to the absolute value nor with respect to the
sign; this is due to the combination of the sine appearing in $\phi$ with the sine appearing in the eigenfunctions, see Theorem \ref{eigenvalue}. The same happens also for $\phi(x)=7\sin7x,\, 9\sin9x,\, 11\sin11x$.
It is clear that a rule exists but its determination falls beyond the scopes of the present paper.\par
$\bullet$ We also compared the values of the ratios $\gamma(m)=\frac{\nu_{2,2}}{\mu_{m,1}}$ for $m=1,\dots,14$ with their derivatives
$D_k\gamma(m)=D_k(\tfrac{\nu_{2,2}}{\mu_{m,1}})$ for $k=\ell,1,3$. In all three cases, we deduce from Table \ref{tableder2} that these ratios and their derivatives
almost perfectly obey to the following linear law
\begin{equation}\label{odeigen}
\tfrac{D_k\gamma(m)}{c_{0,k}+c_{1,k}\,\gamma(m)}=-1,
\end{equation}
for $k=\ell,1,3$ and $m=1,\dots,14$, with
$$
c_{0,\ell}= 0.14\,,\, c_{1,\ell}=95.53\,,\quad
c_{0,1}=1.897\cdot10^{-3} \,,\, c_{1,1}= 1.443\,,\quad
c_{0,3}= 19\cdot10^{-4}\,,\, c_{1,3}=4.546 \,.
$$
These facts will be exploited in Section \ref{nostro}.

\section{Thresholds for the torsional stability}\label{thresholds}

In recent years the attention of civil engineers has shifted towards the sensitivity analysis and optimal design aiming to improve the performances of
suspension bridges, see \cite{hhs,jhnm}. As explicitly mentioned in the preface of the monograph by Jurado, Hern\'andez, Nieto, and Mosquera \cite{jhnm},
the trend is nowadays to avoid expensive tests in wind tunnels and to test numerically the performances of different designs; hopefully, these tests
should be preceded by a suitable mathematical modeling and, possibly, by analytic arguments, see \cite{lacarbonara}.\par
According to the Federal Report \cite{ammann}, the main reason for the Tacoma Narrows Bridge collapse \cite{tacoma} was the sudden transition from
longitudinal to torsional oscillations. Several other bridges collapsed for the same reason, see e.g.\ \cite[Chapter 1]{bookgaz} or the introduction
in \cite{bfg}. Hence, the most dangerous oscillations, leading to fractures and collapses, are the torsional ones and a common target of engineers is to
find possible ways to prevent their appearance in the deck. Our purpose in this second part of the paper is to discuss the possibility
of finding a domain functional able to quantify how much a plate is prone to transform longitudinal oscillations into torsional ones.
To this end, we need to review some rules that a deck is known to obey, starting from the aerodynamic mechanism generating oscillations.\par
When the wind hits the deck of a bridge the direction of the flow is modified and goes around the body. Behind the deck, or a corner of it, the flow
creates vortices which are, in general, asymmetric. This asymmetry generates a forcing lift which launches the vertical oscillations of the deck. Up to
some minor details, this explanation is shared by the whole community and has been studied with great precision in wind tunnel tests, see e.g.\
\cite{larsen,scanlan,scott}.

Inspired by previous results by Bleich \cite{bleichsolo,bleich}, Rocard \cite[p.163]{rocard} shows that for common bridges there exists a threshold for the
velocity of the wind (that we denote by $V_c$) above which the bridge undergoes to {\em flutter}, namely a form of instability which is visible in many
objects and appears as an uncontrolled vibration, see the videos \cite{youtube}. On the other hand, the vortices induced by the wind increase the
internal energy of the structure and generate wide longitudinal oscillations which look periodic in time and are maintained in amplitude by a somehow
perfect equilibrium between the input of energy from the wind and internal dissipation: at this point, one may assume that the deck is isolated.
Then one wonders how the longitudinal oscillations
may transform into torsional ones. Recent results in \cite{bbfg,bfg,elvisefilippo,bgz} show that the transition is due to some internal
resonance, a phenomenon typical of (conservative) Hamiltonian systems, see \cite{SVM,verh}. This resonance appears in nonlinear systems and depends on
the amount of energy inside the system: when an energy threshold (that we denote by $E_c$) is reached, there is a sudden transfer of energy between
the different components of the Hamiltonian system. For bridges this translates into the possible appearance of wide torsional oscillations from an
apparently pure longitudinal oscillation.\par
In fact, the thresholds for flutter and for internal resonance are linked by the well-known formula for the kinetic energy
\neweq{EcVc}
E_c=k\, V_c^2\ ,
\endeq
for some $k>0$ depending on the air density. We refer to \cite{butikov,herrmann,jenkins} for a clear explanation of the relationship between
flutter and internal resonance.\par
The main idea for computing the energy threshold $E_c$ is as follows. The starting point is a nonlinear evolution equation associated to \eq{bridge} such as
\neweq{pde}
u_{tt}+\Delta^2 u+g(u)=0\ ,\qquad\text{for }x\in\Omega\, \ t>0\, ,
\endeq
complemented with the same boundary conditions and some initial data; here, $g(u)$ is a general nonlinearity, possibly nonlocal, describing both the
nonlinear behavior of the structure and the nonlinear action of the sustaining cables. Well-posedness for this problem is proved in \cite{fergaz} by
using the Galerkin method, that is, an approximation of \eq{pde} involving a finite number of modes. By restricting
the attention to a couple of (longitudinal,torsional) modes, problem \eq{pde} is reduced to a second order 2-DOF Hamiltonian system of the form
\neweq{Hamilton}
\ddot{x}+G_x(x,y)=0\ ,\qquad\ddot{y}+G_y(x,y)=0\ ,
\endeq
which has a first integral representing the conserved energy $(\dot{x}^2+\dot{y}^2)/2+G(x,y)$ for some potential $G$.
The stability of \eq{Hamilton} is studied by taking initial data which concentrate almost all the energy on the mode $x$, that is,
$|\dot{y}(0)|+|y(0)|\ll|\dot{x}(0)|+|x(0)|$; this models the situation where the deck is initially oscillating with a wide longitudinal time-dependent
width $x=x(t)$ and with an imperceptible torsional amplitude $y=y(t)$.\par
For a nonlinear string equation, Cazenave and Weissler \cite{cazw} (see also \cite{cazwei}) were able to study the stability for each
couple of modes; this was possible because their nonlinearity was due to a nonlocal term which allowed the dynamics to remain concentrated on the
initially excited modes (through separation of variables) and, hence, they obtained a system such as \eq{Hamilton}. We also refer to \cite{bbfg} for stability results for a nonlinear nonlocal beam equation. A much more difficult case appears to be that involving local terms, although recent
attempts in \cite{bfg} show that, at least with suitable truncations and approximations, this seems to be possible: it was found there that if the total
energy overcomes a threshold $E_c>0$ that can be computed numerically, then there is a transfer of energy from $x$ to $y$, that is, the amplitude of the
torsional component suddenly grows up. This phenomenon is related to the theory of normal forms (see e.g.\ \cite{SVM}) and has sound explanations through
the Floquet theory and Poincar\'e maps (see e.g.\ \cite{chicone}).\par
Since the ``exact and explicit'' form of $g(u)$ in \eq{pde} is not known due to the extreme complexity of the mechanical system, in the next section
we consider some ``toy models'' from which we derive a general rule for \eq{Hamilton}, independently of $G$.

\section{Energy thresholds for some simple Hamiltonian systems}\label{criticalenergy}

We report below on five numerical experiments that we performed. For each experiment, on the left we write the Hamiltonian system considered and the
corresponding conserved energy, whereas on the right we plot the dependence of the energy threshold $E_c$ on the ratio $\gamma$ of the eigenvalues
(or squared frequencies); this plot is obtained by increasing with step $0.1$ the ratio starting from $\gamma=1$ and then by interpolation.\par\bigskip

\begin{minipage}{76mm}
$$(A)\ \left\{\begin{array}{l}
\ddot{x}+(1+x^2+y^2)x=0\\
\ddot{y}+\gamma(1+x^2+y^2)y=0
\end{array}\right.$$
\par\bigskip
$$E=\frac{\dot{x}^2}{2}+\frac{\dot{y}^2}{2\gamma}+\frac{x^2}{2}+\frac{y^2}{2}+\frac{(x^2+y^2)^2}{4}$$
\end{minipage}\qquad
\begin{minipage}{80mm}
{\includegraphics[height=38mm, width=70mm]{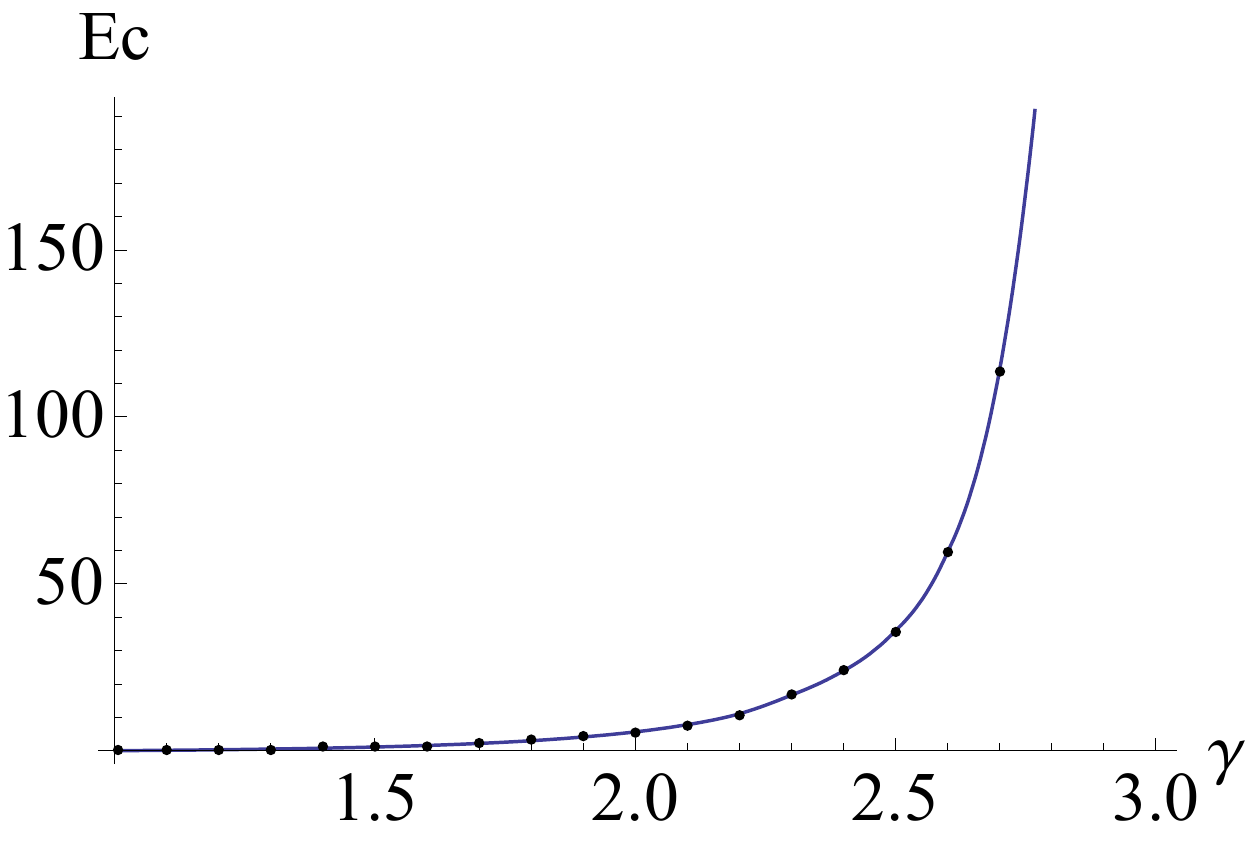}}
\end{minipage}\par\bigskip\noindent
System $(A)$ was derived by Cazenave and Weissler \cite{cazwei} while considering a nonlinear nonlocal wave equation; up to minor changes it
may also be obtained from a beam equation, see \cite{bbfg}. We observe that the energy threshold $E_c$ is a convex and increasing function of $\gamma$
(the squared frequency ratio). It has a behavior similar to $a(\gamma^b-1)^c$ for some $a,b,c>0$
although no choice of these parameters really fits into the displayed plot: in fact, the growth of $E_c(\gamma)$ is more similar to an exponential.
We detected no instability for $(A)$ when $\gamma\le1$; it is proved analytically in \cite{bbfg} that $(A)$ is stable whenever $\gamma<0.955$ although
the conjecture is precisely that $(A)$ is stable for $\gamma\le1$. With simple scalings of $(A)$ one can derive the energy thresholds also for the
systems (with $\alpha,\beta>0$)
$$(A')\ \left\{\begin{array}{l}
\ddot{x}+(\alpha+x^2+y^2)x=0\\
\ddot{y}+\gamma(\alpha+x^2+y^2)y=0\, ,
\end{array}\right.\qquad\qquad
(A'')\ \left\{\begin{array}{l}
\ddot{x}+(1+\beta(x^2+y^2))x=0\\
\ddot{y}+\gamma(1+\beta(x^2+y^2))y=0\, .
\end{array}\right.$$
It is clear that, qualitatively, the plot of $E_c$ remains the same although it is quantitatively different.\par
A slightly different qualitative behavior is exhibited by system $(B)$. This system is also derived from a nonlinear nonlocal
wave equation, see \cite{cazwei,cazw}.

\begin{minipage}{76mm}
$$(B)\ \left\{\begin{array}{l}
\ddot{x}+(1+x^4+y^2)x=0\\
\ddot{y}+\gamma(1+x^2+y^4)y=0
\end{array}\right.$$
\par\bigskip
$$E=\frac{\dot{x}^2}{2}+\frac{\dot{y}^2}{2\gamma}+\frac{x^2}{2}+\frac{y^2}{2}+\frac{x^6}{6}+\frac{y^6}{6}+\frac{x^2y^2}{2}$$
\end{minipage}\qquad
\begin{minipage}{80mm}
{\includegraphics[height=38mm, width=70mm]{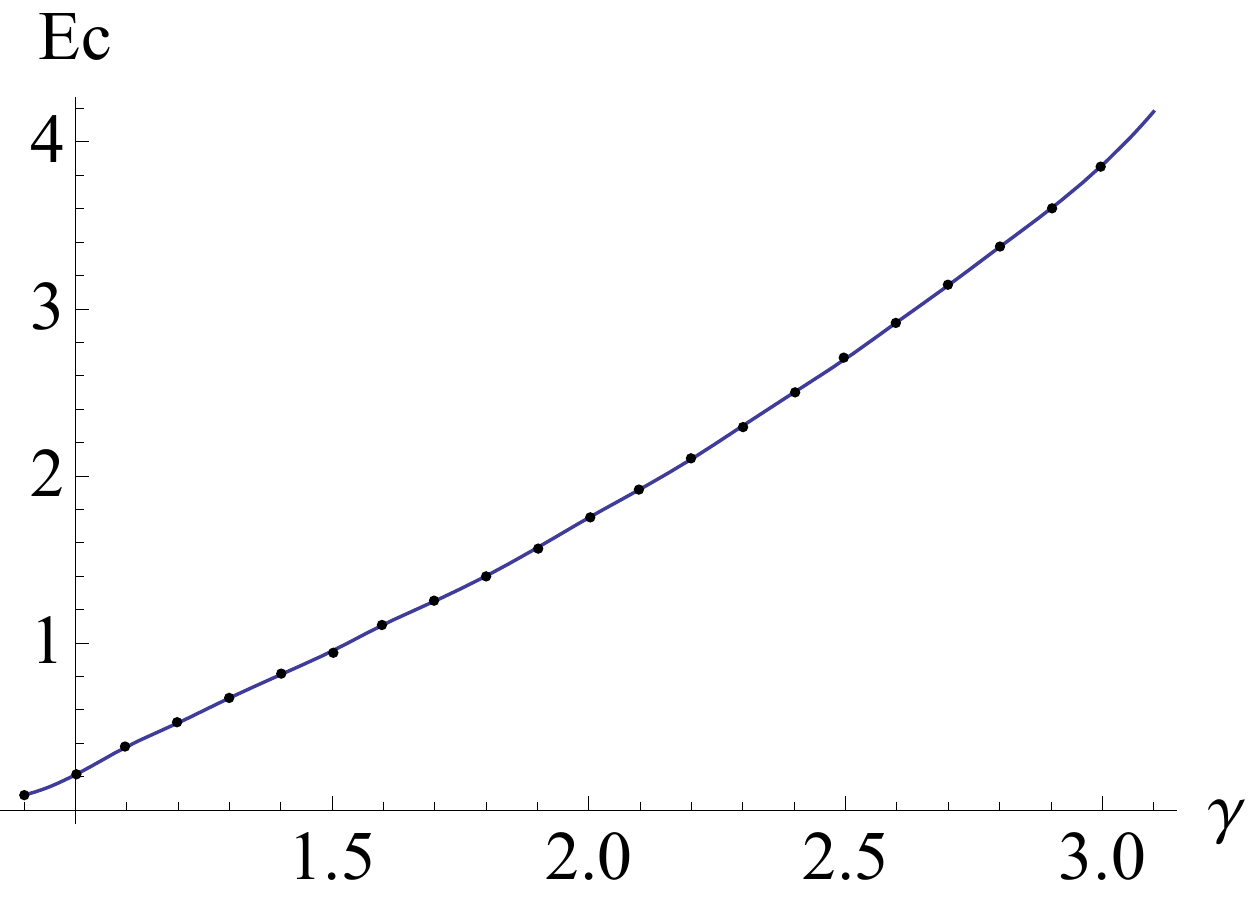}}
\end{minipage}\par\bigskip\noindent
The map $\gamma\mapsto E_c(\gamma)$ is again increasing and convex for $\gamma\ge1$ but there are two crucial differences: we have $E_c(1)>0$
and also $E_c(\gamma)>0$ for all $\gamma>0.84$ while it seems that there is no energy transfer for $\gamma<0.84$. Whence, there may be energy
transfer also from the component with larger frequency towards the component with smaller frequency. We found that $E_c$ exists and is decreasing
for $\gamma\in(0.84,0.98)$. Even if the numerical results exhibit some instability, they are neat and we think that they are reliable.
But we have no explanation of these facts, besides the possible unexpected behavior of resonance tongues in some Hill equations, see
e.g.\ \cite{broer,broer2}.\par
A completely different behavior is displayed by system $(C)$ which was intensively studied in \cite{bgz} in order to figure out a criterion for
the energy transfer in Hamiltonian systems with more than 2-DOF.

\begin{minipage}{76mm}
$$(C)\ \left\{\begin{array}{l}
\ddot{x}+(1+y^2)x=0\\
\ddot{y}+\gamma(1+x^2)y=0
\end{array}\right.$$
\par\bigskip
$$E=\frac{\dot{x}^2}{2}+\frac{\dot{y}^2}{2\gamma}+\frac{x^2}{2}+\frac{y^2}{2}+\frac{x^2y^2}{2}$$
\end{minipage}\qquad
\begin{minipage}{80mm}
{\includegraphics[height=38mm, width=70mm]{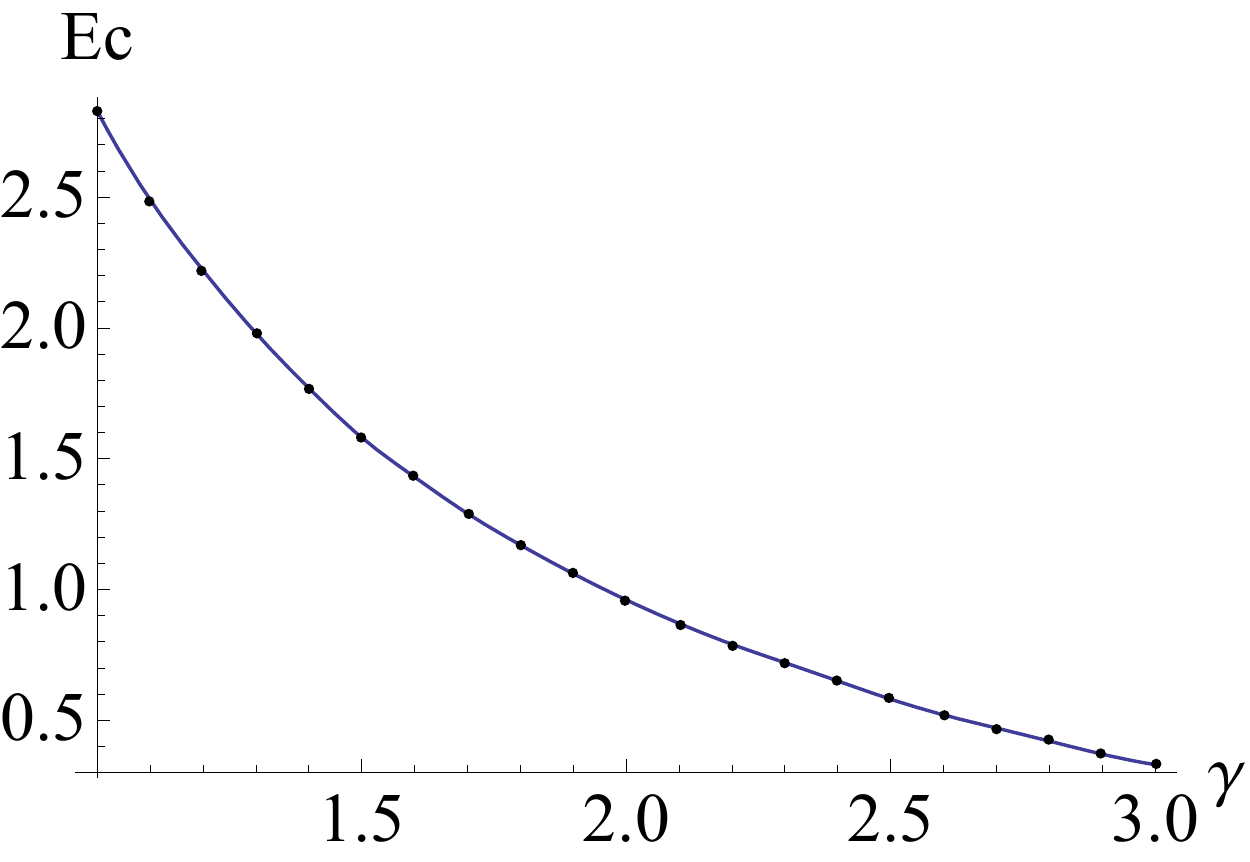}}
\end{minipage}\par\bigskip\noindent
The map $\gamma\mapsto E_c(\gamma)$ is now strictly decreasing although it maintains convexity.
This has a sound explanation in terms of the Mathieu tongues of instability, see \cite{bgz}. Roughly speaking, if $y$ is small then $(C)$ has solutions
close to $(x(t),y(t))\approx(x(0)\cdot\cos t,0)$ when $\dot{x}(0)=0$. By replacing this solution into the second equation, we obtain the Mathieu equation
$\ddot{y}+\gamma(1+x(0)^2\cos^2t)y=0$ for which the resonance tongues are explicitly known, see e.g.\ \cite{cesari,chicone}. By following the Mathieu functions,
one may justify the displayed behavior of $E_c(\gamma)$.\par
The just described results show that the energy thresholds of Hamiltonian systems are very sensitive to the coupling terms.
The energies of the three Hamiltonians $(A)$, $(B)$, and $(C)$, all have the terms
\neweq{postogamma}
\frac{\dot{x}^2}{2}+\frac{\dot{y}^2}{2\gamma}+\frac{x^2}{2}+\frac{y^2}{2}+\frac{x^2y^2}{2}\, .
\endeq
In $(C)$ there is no additional term, in $(A)$ there is the additional term $\frac{x^4}{4}+\frac{y^4}{4}$, whereas in $(B)$ this term is replaced by
$\frac{x^6}{6}+\frac{y^6}{6}$. Still, they exhibit fairly different behaviors.\par
At this point, one may wonder if the ratio of the eigenvalues $\gamma$ is put in the right place in the energy \eq{postogamma}. Therefore we performed
numerical experiments also for the systems $(D)$ and $(E)$. What we discovered is disarming.\par
\begin{minipage}{76mm}
$$(D)\ \left\{\begin{array}{l}
\ddot{x}+(1+x^2+y^2)x=0\\
\ddot{y}+(\gamma+x^2+y^2)y=0\, .
\end{array}\right.$$
\par\bigskip
$$E=\frac{\dot{x}^2}{2}+\frac{\dot{y}^2}{2}+\frac{x^2}{2}+\gamma\frac{y^2}{2}+\frac{(x^2+y^2)^2}{4}$$
\end{minipage}\qquad
\begin{minipage}{80mm}
{\includegraphics[height=38mm, width=70mm]{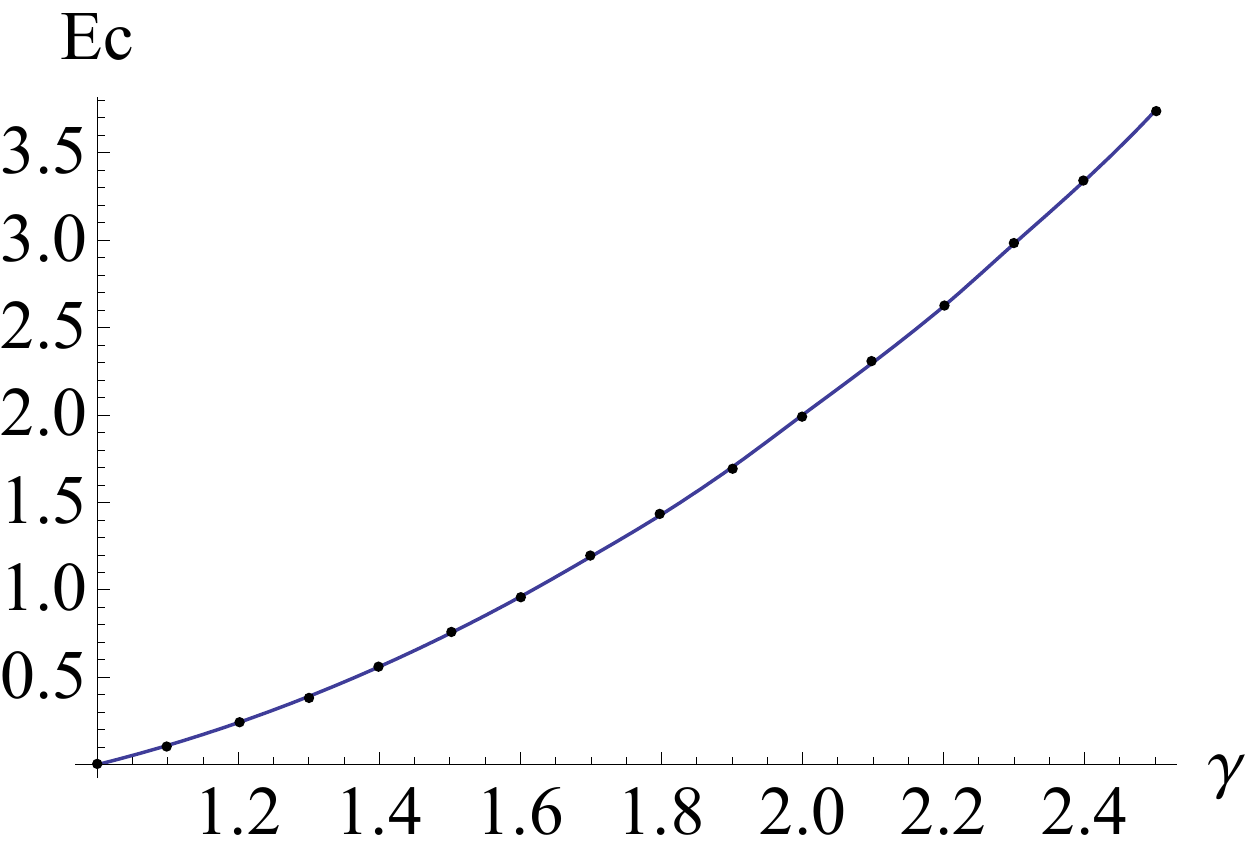}}
\end{minipage}\par\bigskip\noindent
The linearizations of $(D)$ and $(A)$ are the same, that is $\ddot{x}+x=0$ and $\ddot{y}+\gamma y=0$, the ratio of the squared frequencies is
$\gamma$ and the plots of the two critical energies is qualitatively similar (increasing and convex). On the other hand, if we perturb $(D)$ and
consider $(E)$ the energy becomes increasing and concave.\par
\begin{minipage}{76mm}
$$(E)\ \left\{\begin{array}{l}
\ddot{x}+(1+x^2+y^2+2x^6)x=0\\
\ddot{y}+(\gamma+x^2+y^2)y=0\, .
\end{array}\right.$$
\par\bigskip
$$E=\frac{\dot{x}^2}{2}+\frac{\dot{y}^2}{2}+\frac{x^2}{2}+\gamma\frac{y^2}{2}+\frac{(x^2\!+\!y^2)^2}{4}+\frac{x^8}{4}$$
\end{minipage}\qquad
\begin{minipage}{80mm}
{\includegraphics[height=38mm, width=70mm]{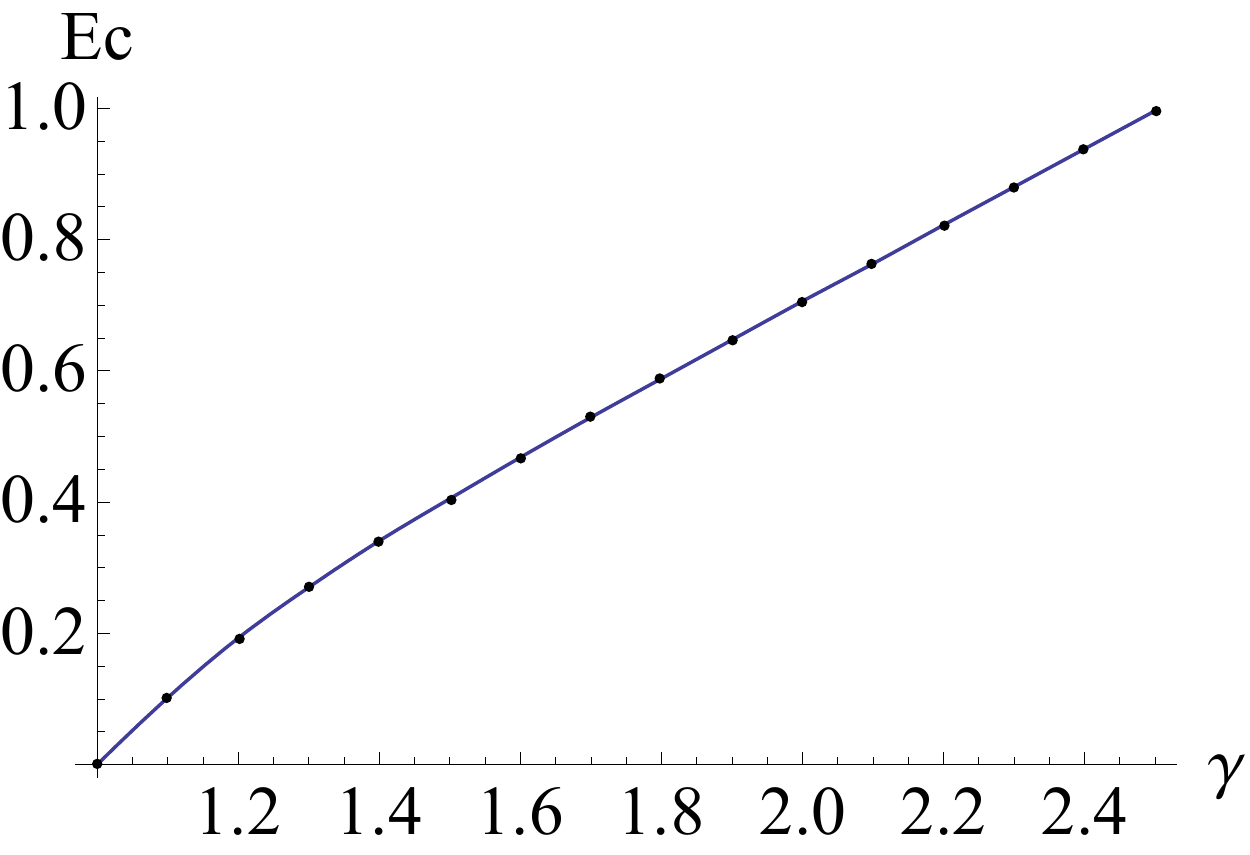}}
\end{minipage}\par\bigskip\noindent

We conclude this section with some remarks aiming to derive a general rule between the considered Hamiltonian systems.
The energy \eq{postogamma} and its perturbations may not satisfy the crucial property that $E_c(1)=0$ whereas the energy of systems $(D)$ and $(E)$ satisfies
this property. On the other hand, $(D)$ and $(E)$ show that
$$
\mbox{there is no common rule on the convexity of the map $\gamma\mapsto E_c(\gamma)$.}
$$
However, all the considered Hamiltonian systems also share a common feature: all the plots displayed in this section show that the map $\gamma\mapsto E_c(\gamma)$ has a ``nice and regular'' graph, representing some simple looking smooth function. These results suggest that\renewcommand{\arraystretch}{1.1}
\neweq{suggest1}
\begin{array}{c}
\mbox{{\bf the energy threshold for the instability of a 2-DOF Hamiltonian system}}\\
\mbox{{\bf depends on the ratio of the squared frequencies.}}
\end{array}\endeq\renewcommand{\arraystretch}{1.5}

\section{Is there a shape functional able to compute the torsional instability?}\label{nostro}

In his pioneering monograph, Rocard \cite[p.164]{rocard} writes that {\em a wide bridge is more stable than a narrow bridge}. For rectangular
plates $\Omega^\ell=(0,\pi)\times(-\ell,\ell)$, this means that
$$
\mbox{{the map}\ }\ell\mapsto V_c(\Omega^\ell)\mbox{ {is strictly increasing}.}
$$
In fact, Rocard \cite[p.186]{rocard} also writes that {\em if all the other factors remain equal and if the natural frequency of bending has
a fixed value, then a bridge twice as wide will have exactly double critical speed $V_c$}. This may be rephrased for rectangular plates as:
\neweq{suggest3}
\mbox{{\bf the map }}\ell\mapsto V_c(\Omega^\ell)\mbox{ {\bf is linearly increasing.}}
\endeq
Condition \eq{suggest3} shows that a functional aiming to compute the flutter velocity of $\Omega^\ell$ should have the
form $V_c=c\, \ell \varphi(\mu,\nu)$ (with $c>0$) for the couple of (longitudinal,torsional) eigenvalues
$(\mu,\nu)$; as we shall see in formula \eq{flutterocard} below, this is precisely the form derived by Rocard. In view of \eq{EcVc}, we may rewrite
this formula in terms of the critical energy of the rectangular plate $\Omega^\ell$, depending on its width:
\neweq{functional}
E_c(\Omega^\ell)=c\, \ell^2\, f(\mu,\nu)\, .
\endeq

So far, we have only considered rectangular plates $\Omega^\ell$. However, in some cases, there is no physical space to enlarge the hinged part of the deck, see
for instance the Aizhai Suspension Bridge \cite{aizhai} and its pictures available on the web; see also the pictures of the cross sections of the
decks of the Severn Bridge and of the Humber Bridge in \cite[Section 2.3.1]{jhnm}. Therefore, one may also be interested in modifying the shape of
the free edges without altering the width of the hinged ones, as in Theorem \ref{duesetted}. Physically meaningful shapes should be symmetric
with respect to the midline of the deck. Therefore, we focus our attention on plates which can be described by
\neweq{newkind}
\Omega_\phi=\big\{(x,y)\in\R^2;\, 0<x<\pi,\, -\ell-\phi(x)<y<\ell+\phi(x)\big\},
\endeq
where $\phi(0)=\phi(\pi)=0$ is a continuous function on $[0,\pi]$ as defined in \eq{phi_def}, see Figure \ref{norectangle}. With this notation we have that $\Omega_{0}=\Omega^\ell$.

\begin{figure}[ht]
\begin{center}
{\includegraphics[height=35mm, width=105mm]{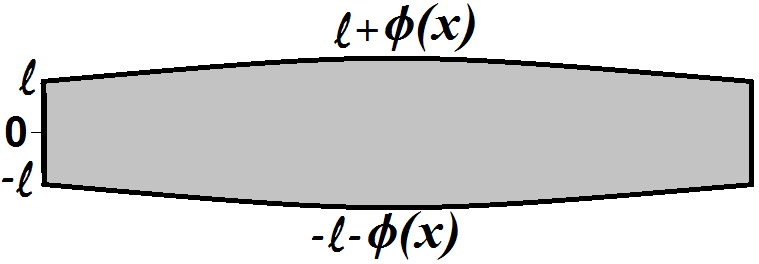}}
\caption{A possible non-rectangular plate $\Omega_\phi$.}\label{norectangle}
\end{center}
\end{figure}

We now generalize \eq{functional} to the case of plates $\Omega_\phi$ of the kind \eq{newkind}. In particular, we must modify the factor $\ell^2$
with a term which also depends on the function $\phi$. The radius of gyration of a plate is related to the moment of inertia with respect to
its axis of symmetry: it is the quadratic mean distance of the parts of the plate from the midline axis (see also Section \ref{flutter}). Therefore, we replace the squared half-width
$\ell^2$ of the rectangular plate $\Omega^\ell$ with the {\em mean value of the squared half-width}:
\neweq{Lphi}
L(\phi):=\frac{1}{\pi}\int_0^\pi\big(\ell+\phi(x)\big)^2\, dx\, .
\endeq
A possible generalization of \eq{functional} able to measure also the flutter velocity of the plate $\Omega_\phi$ in \eq{newkind} then reads
\neweq{flutterocardgen}
E_c(\Omega_\phi)=C\, L(\phi)\, f(\mu,\nu)\,.
\endeq
Using the fact that $\Omega_0=\Omega^\ell$ in \eq{flutterocardgen}, we find \eq{functional}: hence \eqref{flutterocardgen} is indeed a generalization of \eqref{functional}.
Even if Remark \ref{notation_phi} states that variations of $\Omega$ with respect to $\phi=1$ and with respect to $\phi=\sin x$ should not be compared, in view of \eqref{osservazione} and since they both contribute to enlarge the width of the
plate, it is reasonable to expect that
\neweq{reasonable}\renewcommand{\arraystretch}{1.1}
\begin{array}{cc}
\mbox{\bf $E_c$ has the same monotonicity when the long edges of $\Omega^\ell$}\\
\mbox{\bf are perturbed by $\phi=1$ and by $\phi=\sin(x)$.}
\end{array}\renewcommand{\arraystretch}{1.5}
\endeq

In order to understand if the plate increases or decreases its stability, we need to compute the
variation of $E_c$. To this aim, we perturb the free edges
$(0,\pi)\times\{-\ell,\ell\}$ of $\Omega^\ell$ with the function $\phi$ and compute the derivative of $L$ in the direction $\phi$ that we denote by
\neweq{chch}
L'(\phi):=\lim_{\eps\to0}\frac{L(\eps\phi)-L(0)}{\eps}=\frac{2\ell}{\pi}\int_0^\pi \phi(x)\, dx\, .
\endeq
Note that when $\phi= 1$ we get $L'(1)=2\ell$ which is the derivative of $\ell^2$.

\begin{remark} {\em The shape derivative formula \eq{chch} does not change if we replace \eq{Lphi} with
$$\widetilde{L}(\phi):=\left(\frac{1}{\pi}\int_0^\pi\big(\ell+\phi(x)\big)\, dx\right)^2$$
which represents the {\it squared mean value of the half-width}.}\end{remark}

Let us now turn to the function $f=f(\mu,\nu)$. Rocard \cite[p.169]{rocard} claims that for the usual design of bridges the torsional frequency $\omega_t$
is larger than the longitudinal frequency $\omega_v$; further evidence of this fact comes from Irvine \cite[p.178]{irvine}. Moreover, Rocard claims that the
bridge is stable if $\omega_t<\omega_v$ and very unstable (with small $V_c$) if $\omega_t\approx\omega_v$ with $\omega_t>\omega_v$. This results in
the following properties for the function $f$, assumed to be continuous:
$$f(\mu,\nu)>0\quad\forall\nu>\mu>0\ ,\qquad f(\mu,\mu)=0\quad\forall\mu>0\, .$$
Notice that, in view of \eq{comparison}-\eq{comparison2} and \eq{large freq}, we restrict our attention to the following couples of eigenvalues
\begin{eqnarray}
(\bar\mu_{m,1},\bar\nu_{1,2})& & \text{ with }  m=1,...,10\,;\notag\\
(\bar\mu_{m,1},\bar\nu_{n,2})& &\text{ with } m=1,...,14;n=2,...,5\,;\label{choices}\\
(\bar\mu_{m,k},\bar\nu_{n,3})& &\text{ with } k=1,2\,;m=1,...,14;n=1,...,5\,. \notag
\end{eqnarray}
Furthermore, since from \eq{suggest1} we learn that the energy threshold for the instability should depend on the ratio of the squared frequencies,
we end up with the following family of functions
\neweq{suggest4}
f(\mu,\nu)=g\left(\frac{\nu}{\mu}\right)\quad \text{with } g\in C^0[1,+\infty]\,: g(s)>0 \quad\forall s>1\,,\quad  g(1)=0\,.
\endeq
This behavior is qualitatively the one displayed by the Hamiltonian systems $(A)$, $(D)$, and $(E)$ in Section \ref{criticalenergy}, while the systems $(B)$ and $(C)$
fail to satisfy the last condition in \eq{suggest4}. Keeping in mind \eq{suggest4}, by using Corollary \ref{implicitfunctioncorollary} and
Corollary \ref{esplicito}, one may compute the variation of $E_c(\Omega^\ell)$ and $E_c(\Omega_\phi)$ as defined in \eq{functional} and \eq{flutterocardgen}. In the first case, assuming $g$ also differentiable, we obtain
\neweq{diffg}
D_{\ell}E_c(\Omega^\ell)=c\ell \left[2g\left(\frac{\nu}{\mu}\right)+\ell g'\left(\frac{\nu}{\mu}\right)D_{\ell}\left(\frac{\nu}{\mu}\right)\right]\,,
\endeq
and a similar formula may be derived for $E_c(\Omega_\phi)$. Since from Table \ref{tableder} we inferred the empirical rule \eq{odeigen}, formula \eq{diffg}
contradicts \eq{suggest3} for $\nu\approx \mu$ (recall that the corresponding frequencies are $\omega_v=\sqrt{\mu}$ and $\omega_t=\sqrt{\nu}$). Namely,
\neweq{finalc}
\mbox{{\bf there exists no function $f$ such that \eq{suggest3} and \eq{suggest4} hold for all $\nu>\mu>0$}.}
\endeq

Nevertheless, functions in the form \eq{suggest4} for which \eq{suggest3} holds for most of the couples of eigenvalues \eq{choices} can be found. We quote here below two examples of $g$ and, in turn, of $f$.\par\smallskip\noindent
$\bullet$ $g(s)=(s-1)^{1/10}+\frac{1}{10}(s-1)$ gives positive numbers in \eq{diffg} for all couples in $\eq{choices}$ except for $(\bar\mu_{m,1},\bar\nu_{n,3})$ with $m=1,...,14$ and $n=1,...,5$.\par
If we consider instead $D_1 E_c(\Omega_{\phi})$, we have that: it is negative for the couples $(\bar\mu_{m,1},\bar\nu_{1,2})$ with $m=1,...,10$ and for the couples $(\bar\mu_{m,1},\bar\nu_{n,3})$ with $m=1,...,14$ and $n=1,...,5$; positive for $(\bar\mu_{m,2},\bar\nu_{n,3})$ with $m=1,...,14$ and $n=1,...,5$. For $(\bar\mu_{m,1},\bar\nu_{n,2})$ we get both positive and negative derivatives depending on $m$ and $n\geq 2$.
\par\smallskip\noindent
$\bullet$ $g(s)=(s-1)^{1/4}$ gives positive numbers in \eq{diffg} for all couples in $\eq{choices}$ except for $(\bar\mu_{10,1},\bar\nu_{1,2})$ and $(\bar\mu_{14,1},\bar\nu_{2,2})$.
\par
If we consider instead $D_1 E_c(\Omega_{\phi})$, we have that: it is negative for the couples $(\bar\mu_{m,1},\bar\nu_{n,3})$ with $m=1,...,14$
and $n=1,...,5$; positive for the couples $(\bar\mu_{m,2},\bar\nu_{n,3})$ with $m=1,...,14$ and $n=1,...,5$. For $(\bar\mu_{m,1},\bar\nu_{n,2})$ we get both positive and negative derivatives depending on $m$ and $n$.
\par\smallskip
We observe that these functions violate both \eq{suggest3} and \eq{reasonable} for some couples of eigenvalues. This gives additional evidence to the statement \eqref{finalc}.

\section{A quick overview of the engineering literature}\label{flutter}

Let $M$ be the mass of the unit length of the deck (steel and concrete assembled within the same unit length) and let $M_0$ be the mass of air in a
square parallelepiped erected above unit length. For common bridges the ratio $M/M_0$ is around 50, see \cite[p.151]{rocard}. Rocard \cite[p.169]{rocard} considers the natural longitudinal and torsional frequency of the bridge denoted, respectively, by $\omega_v$
and $\omega_t$: he claims that for common bridges one has $\omega_t>\omega_v$ and that the critical velocity of the wind for which the
bridge undergoes to flutter may be computed through the formula \cite[p.163]{rocard}
\neweq{flutterocard2}
V_c^2=\frac{2r^2\ell^2}{2r^2+\ell^2}\frac{M}{M_0}(\omega^2_t-\omega^2_v)\, ,
\endeq
where $2\ell$ is the width of the deck and $r$ is the radius of gyration of the unit length in the deck.
Rocard \cite[p.142]{rocard} well explains the role of the radius of gyration and shows that $r\approx\ell/\sqrt2$. Therefore, formula
\eq{flutterocard2} becomes
\neweq{flutterocard}
V_c^2(\Omega^\ell)=C\, \ell^2(\omega^2_t-\omega^2_v)\ ,
\endeq
where $C>0$ is a physical constant depending on the shape and the material composing the deck.\par
Formula \eq{flutterocard2} was later modified by Selberg \cite{selberg} and, more recently, in \cite[Formula (20)]{matsumoto}; in both cases, there is a
different multiplicative constant $C$ in front of $\ell^2(\omega^2_t-\omega^2_v)$. Let us also mention that related formulas were suggested by
Irvine \cite[Formula (4.91)]{irvine} and by Como, Del Ferraro, and Grimaldi \cite[$\S$ 8]{como}. All these references share the idea of the dependence of
the stability on the quantity $\ell^2(\omega^2_t-\omega^2_v)$. We refer to \cite[$\S$ 1.7.4]{bookgaz} for more details and references.\par
Summarizing, there is an evident disagreement on the exact value of the constant $C>0$, the reason being that in most cases the value of the ``characteristic
parameters'' of a bridge are not found theoretically but with wind tunnel tests, which allow to compute the so-called flutter derivatives also known as aeroelastic (or aerodynamic) derivatives. These are the coefficients
to be inserted in suitable linear ODE's used to find the longitudinal and torsional frequencies, as first suggested by Scanlan and Tomko \cite{scantom}. The flutter
speed may then be computed through closed formulas, see e.g.\ \cite[Formula (9.156)]{lacarbonara}, which, however, are very different from
\eq{flutterocard}.\par
We tested \eq{flutterocard} numerically: by \eq{EcVc}, this formula also reads
$$E_c(\Omega^\ell)=c\, \ell^2\, f(\mu,\nu)\qquad\mbox{with}\quad f(\mu,\nu)=\max\{\nu-\mu,0\}\, .$$
Again, in view of \eq{comparison}-\eq{comparison2}, we focused our attention only on the three families of couples in \eq{choices}.
By using Theorems \ref{implicitfunction} and \ref{duesetted} as well as their corollaries, we obtained the following results.\par
$\bullet$ When $\phi=1$, the derivatives of $E_c$ are negative for all the couples in \eqref{choices} except for $(\bar\mu_{m,1},\bar\nu_{n,2})$ with $n=2,....,5$ and $m=1,...,n-1$.\par
$\bullet$ When $\phi=\sin x$, the derivatives of $E_c$ are negative for all the couples in \eqref{choices}.\par
$\bullet$ When $\phi=3\sin 3x$, the derivatives of $E_c$ are negative for the couples in \eqref{choices} with $n\neq1$.\par
From Remark \ref{notation_phi} we recall that the case $\phi=1$ and the cases $\phi=h\sin hx$ are not directly comparable, since the former is broadening the hinged edges, while the latter is just modifying the free edges. However, these results show that the functional \eq{flutterocard}
and its extension \eq{flutterocardgen} fail to satisfy \eq{reasonable}. Moreover, the results for $\phi=1$ show that \eq{flutterocard} fails to
satisfy \eq{suggest3}. Overall, this suggests to consider \eq{flutterocard} unreliable. Let us emphasize that also the engineering literature
is quite skeptic about \eq{flutterocard}; in particular, Holmes \cite[p.293]{holmes} shows that the Selberg's formula does not always agree with
experimental measurements.

\section{Proof of Theorem \ref{eigenvalue}}\label{proof1}

Theorem \ref{eigenvalue} is essentially proved in \cite{bfg,fergaz}, except for two improvements on the bounds for the eigenvalues.\par
First, by Proposition \ref{defunzioni} we know that the least longitudinal eigenvalue $\mu_{m,1}$ corresponding to eigenfunctions with $(m-1)$ nodes in the
$x$-direction is known to be the unique value $\lambda\in((1-\sigma)^2m^4,m^4)$ such that $\Phi^m(\lambda,\ell)=0$.
In Theorem \ref{eigenvalue} we have a larger lower bound which we now prove.

\begin{lemma} For any $m\geq 1$, there holds
\neweq{lowerbound}
\mu_{m,1}>(1-\sigma^2)m^4.
\endeq
\end{lemma}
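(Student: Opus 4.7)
The plan is to evaluate $\Phi^m$ exactly at the candidate boundary $\lambda=(1-\sigma^2)m^4$, show it is strictly positive there, and combine with the known sign at $\lambda=m^4$ plus the uniqueness given by Proposition \ref{defunzioni} to conclude.

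First I would substitute $\lambda^{1/2}=\sqrt{1-\sigma^2}\,m^2$ into the expression for $\Phi^m$. The key algebraic observation is that with
\[
a=\sqrt{1-\sqrt{1-\sigma^2}},\qquad b=\sqrt{1+\sqrt{1-\sigma^2}},
\]
one has $m^2\pm\lambda^{1/2}=m^2 a^2$ or $m^2 b^2$, and
\[
\bigl(\lambda^{1/2}\mp(1-\sigma)m^2\bigr)^2=2m^4(1-\sigma)\bigl(1\mp\sqrt{1-\sigma^2}\bigr),
\]
while $ab=\sqrt{1-(1-\sigma^2)}=\sigma$. A short computation then collapses the two terms of $\Phi^m$ to
\[
\Phi^m\bigl((1-\sigma^2)m^4,\ell\bigr)=2\sigma(1-\sigma)\,m^5\Bigl[b\tanh(\ell m a)-a\tanh(\ell m b)\Bigr].
\]

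Next I would verify that the bracket is strictly positive. Dividing by $ab>0$ reduces this to $\tfrac{\tanh(\ell m a)}{\ell m a}>\tfrac{\tanh(\ell m b)}{\ell m b}$, which holds because $a<b$ and the function $t\mapsto \tanh t/t$ is strictly decreasing on $(0,\infty)$ (standard; it follows from $\tanh t<t$ after differentiation). Therefore $\Phi^m\bigl((1-\sigma^2)m^4,\ell\bigr)>0$.

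Finally I would combine the three facts: (a) $\Phi^m((1-\sigma)^2m^4,\ell)=m\sqrt{\sigma}\cdot 4(1-\sigma)^2m^4\tanh(\ell m\sqrt{\sigma})>0$ (second term vanishes); (b) $\Phi^m(m^4,\ell)=-m\sqrt{2}\,\sigma^2m^4\tanh(\ell m\sqrt{2})<0$ (first term vanishes); and the freshly established (c) $\Phi^m((1-\sigma^2)m^4,\ell)>0$. Since Proposition \ref{defunzioni} guarantees a unique zero of $\lambda\mapsto\Phi^m(\lambda,\ell)$ in $((1-\sigma)^2m^4,m^4)$, and $\Phi^m$ is still positive at $(1-\sigma^2)m^4$ while negative at $m^4$, continuity forces the zero $\mu_{m,1}$ to lie in $\bigl((1-\sigma^2)m^4,m^4\bigr)$, giving \eqref{lowerbound}.

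The main (mild) obstacle is the algebraic simplification leading to the clean form $2\sigma(1-\sigma)m^5[b\tanh(\ell ma)-a\tanh(\ell mb)]$; once the identities $a^2+b^2=2$, $ab=\sigma$ are noticed, the rest is routine and the decreasing property of $\tanh t/t$ closes the argument.
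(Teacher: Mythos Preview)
Your proof is correct and follows essentially the same approach as the paper: evaluate $\Phi^m$ at $\lambda=(1-\sigma^2)m^4$, simplify using the identities for $(\lambda^{1/2}\pm(1-\sigma)m^2)^2$, and reduce positivity to the strict decrease of $t\mapsto\tanh t/t$. Your write-up is slightly more explicit about the endpoint signs and the role of $ab=\sigma$, but the core argument is identical.
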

\begin{proof} Note that $\Phi^m((1-\sigma)^2m^4,\ell)>0$ and $\Phi^m(m^4,\ell)<0$; whence,
\eq{lowerbound} follows if we show that $\Phi^m((1-\sigma^2)m^4,\ell)>0$. After division by $m^5$, after computing the two squared parenthesis $(\cdot)^2$,
and after a further division by $2(1-\sigma)$, we see that the sign of $\Phi^m((1-\sigma^2)m^4,\ell)$ is the same as
$$
\frac{\tanh(\ell m\sqrt{1-(1-\sigma^2)^{1/2}})}{\sqrt{1-(1-\sigma^2)^{1/2}}}-
\frac{\tanh(\ell m\sqrt{1+(1-\sigma^2)^{1/2}})}{\sqrt{1+(1-\sigma^2)^{1/2}}}\, .
$$
But the sign of this term is positive since the map $s\mapsto\frac{\tanh s}{s}$ is strictly decreasing. This proves \eq{lowerbound}.\end{proof}

Next, the (new) bounds for $\mu_{m,k}$ are obtained as follows.

\begin{lemma}
For any $m\geq 1$ and $k\geq 2$, there holds
$$
\left(m^2+\frac{\pi^2}{\ell^2}\left(k-\frac{3}{2}\right)^2\right)^2<\mu_{m,k}<\left(m^2+\frac{\pi^2}{\ell^2}\left(k-1\right)^2\right)^2.
$$
\end{lemma}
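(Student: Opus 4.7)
My plan is to exploit the transcendental equation $\Upsilon^m(\lambda,\ell)=0$ from Proposition~\ref{defunzioni}(ii) by locating, after a suitable change of variable, the half-periods of the tangent on which its zeros must lie. Introducing $\xi := \sqrt{\lambda^{1/2}-m^2}$ (well defined and positive because $\lambda > m^4$), rewrite the equation as
\[
\xi\bigl(\lambda^{1/2}+(1-\sigma)m^2\bigr)^2\tan(\ell\xi) = -\sqrt{\lambda^{1/2}+m^2}\bigl(\lambda^{1/2}-(1-\sigma)m^2\bigr)^2\tanh\!\bigl(\ell\sqrt{\lambda^{1/2}+m^2}\bigr).
\]
Since $\lambda^{1/2} > m^2 > (1-\sigma)m^2$ when $\lambda > m^4$, the right-hand side is strictly negative, while every factor on the left except $\tan(\ell\xi)$ is positive. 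Hence $\tan(\ell\xi) < 0$, and $\ell\xi$ is forced into one of the disjoint open intervals
\[
I_j := \bigl((j+\tfrac{1}{2})\pi,\,(j+1)\pi\bigr),\qquad j = 0,1,2,\ldots.
\]

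Next, I would study $\Upsilon^m$ as a continuous function of $t := \ell\xi$ on each $I_j$. At the left endpoint $t\to(j+\tfrac{1}{2})\pi^+$, $\tan(t)\to-\infty$ while the $\tanh$-summand stays bounded and positive, so $\Upsilon^m\to-\infty$; at the right endpoint $t=(j+1)\pi$, $\tan(t)=0$ and only the strictly positive $\tanh$-summand survives. The intermediate value theorem then produces at least one zero of $\Upsilon^m$ in each $I_j$. To upgrade this to exactly one zero, I would rewrite the equation in the equivalent form $-\tan(\ell\xi) = R(\xi)$ with
\[
R(\xi) := \frac{\sqrt{\xi^2+2m^2}\,(\xi^2+\sigma m^2)^2\tanh\!\bigl(\ell\sqrt{\xi^2+2m^2}\bigr)}{\xi\,(\xi^2+(2-\sigma)m^2)^2},
\]
and note that the left-hand side decreases strictly from $+\infty$ to $0$ on $I_j$, while $R$ is smooth, positive, and tends to $1$ as $\xi\to+\infty$; verifying that the derivative $-\ell\sec^2(\ell\xi) - R'(\xi)$ is strictly negative throughout $I_j$ makes the difference strictly monotone, so the two graphs meet exactly once.

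Since Proposition~\ref{defunzioni}(ii) enumerates $\mu_{m,k}$ ($k\geq 2$) as the increasing sequence of all solutions of $\Upsilon^m(\lambda,\ell)=0$ with $\lambda > m^4$, uniqueness per $I_j$ forces $\ell\sqrt{\mu_{m,k}^{1/2}-m^2} \in I_{k-2}$, that is,
\[
(k-\tfrac{3}{2})\pi < \ell\sqrt{\mu_{m,k}^{1/2}-m^2} < (k-1)\pi,
\]
and squaring twice produces the stated bounds. The main technical obstacle is verifying the monotonicity condition $-\ell\sec^2(\ell\xi) - R'(\xi) < 0$ near the right endpoint of $I_j$, where $\sec^2$ degenerates to $1$ and provides only the bare control by $\ell$; this is handled using the explicit form of $R$ together with the lower bound $\xi \geq (j+\tfrac{1}{2})\pi/\ell$ valid on $I_j$, which ensures $|R'(\xi)|$ is controlled in terms of quantities that stay small compared to $\ell\sec^2(\ell\xi)$. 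An alternative, possibly cleaner, route is to bypass the monotonicity altogether by invoking a Weyl-type count of eigenvalues in each band matched against the enumeration in Proposition~\ref{defunzioni}(ii).
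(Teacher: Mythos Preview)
Your approach is essentially the same as the paper's: both arguments locate the zeros of $\Upsilon^m(\cdot,\ell)$ in the successive intervals determined by the vertical asymptotes of the tangent, and then argue that there is exactly one zero per interval, so that the $(k-1)$-th zero falls where claimed. The paper does this directly in the variable $\lambda$ and simply asserts that $\lambda\mapsto\Upsilon^m(\lambda,\ell)$ is ``continuous, strictly increasing and goes from $-\infty$ to a positive value'' on each interval $\bigl((m^2+\tfrac{\pi^2}{\ell^2}(k-\tfrac32)^2)^2,\,(m^2+\tfrac{\pi^2}{\ell^2}(k-1)^2)^2\bigr)$, without spelling out the monotonicity check.

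Your version is more explicit: you pass to the variable $\xi=\sqrt{\lambda^{1/2}-m^2}$, first pin down $\ell\xi\in I_{k-2}$ via the sign of $\tan(\ell\xi)$, then obtain existence by the intermediate value theorem, and finally address uniqueness by analysing the monotonicity of $-\tan(\ell\xi)-R(\xi)$. This is the same mechanism as the paper's ``strictly increasing'' claim, just unpacked; the technical obstacle you flag (controlling $R'(\xi)$ against $\ell\sec^2(\ell\xi)$ near the right endpoint of $I_j$) is precisely the content hidden in the paper's one-line assertion. Your suggestion of a Weyl-type eigenvalue count as an alternative to the monotonicity computation is a legitimate shortcut, but not needed here and not used in the paper.
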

\begin{proof}
By definition and by Proposition \ref{defunzioni}, for $m$ fixed, $\mu_{m,k}$ is the $(k-1)-$th positive solution $\lambda$ of the equation $\Upsilon^m(\lambda,\ell)=0$.
It is readily seen that the map $\lambda\mapsto\Upsilon^m(\lambda,\ell)$ is continuous, strictly increasing and goes from $-\infty$ to a positive value in any interval $\left(\left(m^2+\frac{\pi^2}{\ell^2}\left(k-\frac{3}{2}\right)^2\right)^2\,,\left(m^2+\frac{\pi^2}{\ell^2}\left(k-1\right)^2\right)^2\right)$,
from which the thesis follows.
\end{proof}

\section{Proof of Theorem \ref{implicitfunction}}

For $\ell=\pi/150$, Proposition \ref{defunzioni} states that the longitudinal eigenvalue $\bar\mu_{m,1}$ is the unique value of $\lambda\in(0.96m^4,m^4)$ such that $\Phi^m(\bar\mu_{m,1},\frac{\pi}{150})=0$ and the torsional eigenvalue $\bar\nu_{n,1}$ is the unique value of $\lambda\in(0.96n^4,n^4)$ such that $\Gamma^n(\bar\nu_{n,1},\frac{\pi}{150})=0$. By the Implicit Function Theorem, the relation $\Phi^m(\lambda,\ell)=0$ implicitly defines, in a neighborhood $U$ of $\ell=\frac{\pi}{150}$, a
smooth function $\mu_{m,1}=\mu_{m,1}(\ell)$ such that
$$\mu_{m,1}\left(\frac{\pi}{150}\right)=\bar\mu_{m,1}\, ,\qquad\Phi^m\big(\mu_{m,1}(\ell),\ell\big)=0\quad\forall\ell\in U\, .$$
Similarly, the relation $\Gamma^n(\lambda,\ell)=0$ implicitly defines, in a
neighborhood $V$ of $\ell=\frac{\pi}{150}$, a smooth function $\nu_{n,1}=\nu_{n,1}(\ell)$ such that
$$\nu_{n,1}\left(\frac{\pi}{150}\right)=\bar \nu_{n,1}\, ,\qquad\Gamma^n\big(\nu_{n,1}(\ell),\ell\big)=0\quad\forall\ell\in V\, .$$

In particular, if we denote by $\Phi^m_\ell$ and $\Phi^m_\lambda$ the partial derivatives of $\Phi^m$ and by $\Gamma^n_\ell$ and $\Gamma^n_\lambda$ the partial derivatives of $\Gamma^n$, we have that
$$
\frac{d\mu_{m,1}}{d\ell}\left(\frac{\pi}{150}\right)=-\frac{\Phi^m_\ell(\bar\mu_{m,1},\frac{\pi}{150})}{\Phi^m_\lambda(\bar\mu_{m,1},\frac{\pi}{150})}
\qquad\mbox{and}\qquad
\frac{d \nu_{n,1}}{d\ell}\left(\frac{\pi}{150}\right)=-\frac{\Gamma^n_\ell(\bar\nu_{n,1},\frac{\pi}{150})}{\Gamma^n_\lambda(\bar\nu_{n,1},\frac{\pi}{150})}\,.
$$
Hence,
$$\frac{d}{d \ell}\left(f(\mu_{m,1}(\ell),\nu_{n,1}(\ell))\right)(\tfrac{\pi}{150})=
-f_{\mu}(\bar\mu_{m,1},\bar\nu_{n,1})\frac{\Phi^m_\ell(\bar\mu_{m,1},\frac{\pi}{150})}{\Phi^m_\lambda(\bar\mu_{m,1},\frac{\pi}{150})}-
f_{\nu}(\bar\mu_{m,1},\bar\nu_{n,1})\frac{\Gamma^n_\ell(\bar\nu_{n,1},\frac{\pi}{150})}{\Gamma^n_\lambda(\bar\nu_{n,1},\frac{\pi}{150})},$$

This gives the explicit form of the derivative of $f(\mu_{m,1}(\ell),\nu_{n,1}(\ell))$. The proofs for the other couples follow similarly.

\section{Proof of Theorem \ref{duesetted}}\label{dimostrazione}

We consider the operator $P$ from $H^2_*(\xi(\Omega))$ to its dual, which takes $u\in H^2_*(\xi(\Omega))$ to $P[u]$, implicitly defined by (we omit
the duality crochet $\langle\cdot,\cdot\rangle$ in order to avoid heavy notations)
\begin{eqnarray*}
P[u][v] &=& \int_{\phi(\Omega)} (1-\sigma)D^2u:D^2v+\sigma\Delta u\Delta vdA\qquad\forall v \in H^2_*(\xi(\Omega))\\
 &=:& (1-\sigma)M(u,v)+\sigma N(u,v).
\end{eqnarray*}
The operator $P$ is easily seen to be a linear homeomorphism of $H^2_*(\xi(\Omega))$ onto its dual.
We also denote by ${\mathcal J}$ the continuous embedding of $H^2_*(\xi(\Omega))$ into its dual, implicitly defined by
\begin{equation*}
{\mathcal J}[u][v]:=\int_{\xi(\Omega)} uvdA\qquad \forall v\in H^2_*(\xi(\Omega)).
\end{equation*}
Then \eq{bridge2} can be written in the weak form
\begin{equation}\label{dirweak3}
P[u][v]=\lambda\mathcal{J}[u][v]\qquad\forall v\in H^2_*(\xi(\Omega)).
\end{equation}
We define the operator $T:=P^{-1}\circ {\mathcal J}$ from $H^2_*(\xi(\Omega))$ to itself. We have the following

\begin{lemma}\label{comsadir1}
Let $\phi\in\mathcal{A}_{\Omega}$. The operator $T$ is a non-negative compact selfadjoint operator in the Hilbert space $H^2_*(\xi(\Omega))$. Its spectrum is discrete
and consists of a decreasing sequence of positive eigenvalues of finite multiplicity converging to zero. Moreover, the equation $Tu=(\lambda^{-1}) u$ is satisfied for some
$u\in H^2_*(\xi(\Omega))$  if and only if equation (\ref{dirweak3}) is satisfied for any $v \in H^2_*(\xi(\Omega))$.
\end{lemma}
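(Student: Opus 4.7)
The plan is to reduce the lemma to the standard spectral theorem for non-negative compact selfadjoint operators on a Hilbert space, by exhibiting $T$ as the composition of $P^{-1}$ with a compact symmetric embedding.

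First I would endow $H^2_*(\xi(\Omega))$ with the inner product $\langle u,v\rangle_P := P[u][v]$ of (\ref{neumannform1}) (transported to $\xi(\Omega)$), which is equivalent to the standard $H^2$ norm on $H^2_*$ thanks to the Navier condition on $\Gamma_1$; in this inner product $P$ is, up to the Riesz identification, the identity. Next I would factor $\mathcal{J}$ through the continuous inclusion $i:H^2_*(\xi(\Omega))\hookrightarrow L^2(\xi(\Omega))$ as $\mathcal{J}=i^*\circ i$, where $i^*$ is the adjoint. Since $\xi(\Omega)$ is a bounded Lipschitz domain (because $\xi\in\mathcal{A}_\Omega$ is bi-Lipschitz and $\Omega$ is rectangular), $i$ is compact by the Rellich--Kondrachov theorem, whence $\mathcal{J}$ is compact as a map from $H^2_*(\xi(\Omega))$ to its dual. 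Composing with the homeomorphism $P^{-1}$ yields that $T=P^{-1}\circ\mathcal{J}$ is a compact linear operator on $H^2_*(\xi(\Omega))$.

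For selfadjointness I would compute directly in the $P$-inner product:
\[
\langle Tu,v\rangle_P=P[Tu][v]=\mathcal{J}[u][v]=\int_{\xi(\Omega)}uv\,dA=\mathcal{J}[v][u]=P[Tv][u]=\langle u,Tv\rangle_P,
\]
using that $\mathcal{J}$ is symmetric. For non-negativity, $\langle Tu,u\rangle_P=\mathcal{J}[u][u]=\int_{\xi(\Omega)}u^2\,dA\ge 0$; moreover if $Tu=0$ then $\mathcal{J}[u][v]=0$ for all $v$, which forces $u\equiv 0$, so $T$ is injective and all its eigenvalues are strictly positive. The spectral theorem for non-negative compact selfadjoint operators then provides a decreasing sequence of strictly positive eigenvalues of finite multiplicity accumulating only at $0$.

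Finally, the equivalence of eigenvalue problems is essentially a bookkeeping step: $Tu=\lambda^{-1}u$ reads $P^{-1}\mathcal{J}[u]=\lambda^{-1}u$, i.e.\ $\mathcal{J}[u]=\lambda^{-1}P[u]$ in the dual, which upon pairing with an arbitrary $v\in H^2_*(\xi(\Omega))$ is exactly (\ref{dirweak3}); the converse reverses the same chain. The main (mild) technical point is the compactness of $i$ on the possibly non-smooth domain $\xi(\Omega)$, but this is covered by the Lipschitz regularity guaranteed by the definition of $\mathcal{A}_\Omega$; everything else is standard Hilbert-space machinery, and I anticipate no serious obstacle.
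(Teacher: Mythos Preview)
Your proposal is correct and follows essentially the same route as the paper: the paper performs the identical selfadjointness computation $\langle Tu,v\rangle = P[(P^{-1}\circ\mathcal{J})[u]][v]=\mathcal{J}[u][v]$, then simply notes that $\mathcal{J}$ is compact and declares the remaining statements straightforward. You have merely supplied the details (Rellich--Kondrachov for compactness, injectivity for strict positivity, the spectral theorem, and the eigenvalue bookkeeping) that the paper leaves implicit.
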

\begin{proof} For the selfadjointness, it suffices to observe that
$$
\langle  T u,v\rangle =\langle (P^{-1}\circ {\mathcal J}) [u],v\rangle =P\left[(P^{-1}\circ {\mathcal J}) [u]\right][v]={\mathcal J}[u][v],
$$
for any $u,v\in {H}^2_*(\xi(\Omega))$. For the compactness, just observe that the operator $\mathcal J$ is compact. The remaining statements are straightforward.
\end{proof}

In order to prove Theorem \ref{duesetted} we fix $\xi\in{\mathcal A}_\lambda$, consider equation (\ref{dirweak3}) in $\xi(\Omega)$, and pull it back to $\Omega$.
The pull-back $M_\xi$ to $\Omega $ of the operator $M$ on $\xi (\Omega)$ is defined by
\begin{equation*}
M_{\xi }[u][v]=     \int_{\Omega }\left(D^2( u\circ\xi^{-1}):D^2( v\circ\xi^{-1}) \right)\circ\xi|{\rm det}\nabla \xi|dA,
\end{equation*}
for all $u,v  \in H^2_*(\Omega )$, and similarly for $N_{\xi}$ and $P_{\xi}=(1-\sigma)M_{\xi}+\sigma N_{\xi}$. We also note that
$$
\mathcal J_{\xi}[u][v]=\int_{\Omega}uv|{\rm det}\nabla\xi|dA\qquad \forall u,v\in H^2_*(\Omega)\\,
$$
and that the map from $H^2_*(\Omega )$ to  $H^2_*(\xi (\Omega ))$ which maps $u$ to $u\circ \xi^{-1} $ for all $u\in H^2_*(\Omega )$ is a linear homeomorphism.
Hence, equation (\ref{dirweak3}) is equivalent to
\begin{equation*}
P_{\xi }[u][\varphi ]=\lambda \mathcal J_{\xi}[u][\varphi ]\qquad\forall \varphi \in H^2_*(\Omega )\,,
\end{equation*}
where $u=v\circ\xi$. It turns out that the operator $T$ defined in Lemma \ref{comsadir1} is unitarily equivalent to the operator $T_{\xi }$ defined on
$H^2_*(\Omega )$  by
\begin{equation}\label{tphid}
T_{\xi}:=P_{\xi}^{-1}\circ {\mathcal J}_{\xi}.
\end{equation}

We endow the space $H^2_*(\Omega)$ also with the bilinear form
\neweq{endow2}
\langle u,v\rangle _{\xi}=P_{\xi}[u][v] \qquad \forall u,v\in H^2_*(\Omega).
\endeq
We have the following lemma where ${\mathcal{L}}(H^2_{*}(\Omega ))$  denotes the space of linear bounded operators from $H^2_{*}(\Omega )$ to itself and and ${\mathcal{B}}_s(H^2_*(\Omega ))$ denotes the space of bilinear forms on $H^2_*(\Omega )$ (both spaces are equipped with their usual norms).

\begin{lemma}\label{changelemmad}
The operator $T_{\xi}$ defined in (\ref{tphid}) is  non-negative selfadjoint and compact  on the Hilbert space $H^2_*(\Omega)$ endowed with \eq{endow2}.
The equation (\ref{dirweak3}) is satisfied for some $v\in H^{2 }_{*}(\xi(\Omega ))$ if and only if  the equation $T_{\xi } u=(\lambda^{-1}) u$ is satisfied with
$u=v\circ \xi $. Moreover, the map from $\mathcal{A}_{\Omega}$ to ${\mathcal{L}}(H^2_*(\Omega ))\times {\mathcal{B}}_s(H^2_*(\Omega ))$  which takes  $\xi \in \mathcal{A}_{\Omega}$ to $(T_{\xi }, \langle \cdot ,\cdot\rangle _{\xi})$ is real-analytic.
\end{lemma}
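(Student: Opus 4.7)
The plan is to split the lemma into two independent pieces: the operator-theoretic properties of a single $T_\xi$ (for fixed $\xi$), and the real-analytic dependence on $\xi$. For the former I would exploit the pull-back isomorphism already implicit in the definition of $P_\xi$; for the latter I would write $T_\xi$ as the composition $P_\xi^{-1}\circ\mathcal{J}_\xi$ and argue analyticity componentwise.

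First, define the pull-back map $\Phi_\xi\colon H^2_*(\xi(\Omega))\to H^2_*(\Omega)$, $v\mapsto v\circ\xi$. Since $\xi\in\mathcal{A}_\Omega$ is a bi-Lipschitz $C^2$ diffeomorphism with $\inf_\Omega|\det\nabla\xi|>0$, $\Phi_\xi$ is a linear homeomorphism. By the very definitions of $M_\xi,N_\xi,\mathcal{J}_\xi$ (pure change of variables), one has $P_\xi[\Phi_\xi v][\Phi_\xi w]=P[v][w]$ and $\mathcal{J}_\xi[\Phi_\xi v][\Phi_\xi w]=\mathcal{J}[v][w]$ for all $v,w\in H^2_*(\xi(\Omega))$. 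Hence $\Phi_\xi$ is a Hilbert space isometry from $(H^2_*(\xi(\Omega)),P[\cdot][\cdot])$ onto $(H^2_*(\Omega),\langle\cdot,\cdot\rangle_\xi)$, and a direct computation gives $T_\xi=\Phi_\xi\circ T\circ\Phi_\xi^{-1}$. Non-negativity, selfadjointness and compactness of $T_\xi$ therefore transfer at once from Lemma \ref{comsadir1}. The equivalence of $T_\xi u=\lambda^{-1}u$ with \eqref{dirweak3} (setting $u=v\circ\xi$) is then immediate from the identity $P_\xi[u][\varphi]=\mathcal{J}_\xi[T_\xi^{-1}\,\mathcal{J}_\xi^{-1}\,\mathcal{J}_\xi u][\varphi]=\lambda\mathcal{J}_\xi[u][\varphi]$, rewritten via the same change of variables.

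For the real-analytic dependence, I would proceed in three ingredients. (a) The map $\xi\mapsto\nabla\xi$ from $C^2_b(\Omega;\mathbb{R}^2)$ to $C^1_b(\Omega;\mathbb{R}^{2\times 2})$ is linear and continuous, hence real-analytic. (b) Inversion of matrices is real-analytic on $GL(2,\mathbb{R})$, and on $\mathcal{A}_\Omega$ the Jacobian stays uniformly invertible; composing with (a) shows $\xi\mapsto(\nabla\xi)^{-1}$ and $\xi\mapsto|\det\nabla\xi|$ are real-analytic with values in $C^1_b(\Omega;\mathbb{R}^{2\times 2})$ and $C^1_b(\Omega)$ respectively. (c) Writing $D^2(u\circ\xi^{-1})\circ\xi$ as a universal polynomial expression in $(\nabla\xi)^{-1}$, $\nabla^2\xi$ and $D^2 u,\nabla u$, the integrands defining $M_\xi,N_\xi,\mathcal{J}_\xi$ become finite sums of products of real-analytic $\xi$-dependent coefficients times fixed bilinear forms in $u,\varphi$. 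Integrating over $\Omega$ then shows $\xi\mapsto P_\xi\in\mathcal{B}_s(H^2_*(\Omega))$ and $\xi\mapsto\mathcal{J}_\xi\in\mathcal{B}_s(H^2_*(\Omega))$ are real-analytic.

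To conclude, I would use that for $\xi=\mathrm{Id}$ the form $P_\mathrm{Id}=(\cdot,\cdot)$ is coercive on $H^2_*(\Omega)$ (the $\|\cdot\|$ of \eqref{neumannform1} is an equivalent norm), and coercivity persists in an $\mathcal{A}_\Omega$-neighbourhood by the continuity just proved; hence the Riesz-type operator $u\mapsto P_\xi[u][\cdot]$ is invertible and, by analyticity of operator inversion on the open set of invertible operators, $\xi\mapsto P_\xi^{-1}$ is real-analytic. Composition with $\mathcal{J}_\xi$ yields analyticity of $\xi\mapsto T_\xi$ in $\mathcal{L}(H^2_*(\Omega))$, while analyticity of $\xi\mapsto\langle\cdot,\cdot\rangle_\xi=P_\xi$ has already been established. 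The principal technical obstacle is step (c): one must carefully expand $D^2(u\circ\xi^{-1})$ via the chain and product rules so as to display the $\xi$-dependence as a finite sum of products of analytic scalar/tensor factors times $\xi$-independent bilinear expressions in $u,\varphi$; once this bookkeeping is done, the uniform bounds on $\|\nabla\xi\|_{C^1_b}$, $\|(\nabla\xi)^{-1}\|_{C^1_b}$ and $\|\nabla^2\xi\|_{C^0_b}$ on a neighbourhood of any given $\xi_0\in\mathcal{A}_\Omega$ make every estimate routine.
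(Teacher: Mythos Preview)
Your proposal is correct and, in fact, supplies exactly the argument that the paper defers to the references \cite{bula2013,buosohinged}; the paper's own proof consists solely of that citation. The two-step structure you outline---unitary equivalence via the pull-back $\Phi_\xi$ to inherit spectral properties from Lemma~\ref{comsadir1}, then analyticity of $\xi\mapsto P_\xi,\mathcal{J}_\xi$ via the polynomial/rational expansion in $\nabla\xi$, $(\nabla\xi)^{-1}$, $\nabla^2\xi$, $|\det\nabla\xi|$---is precisely the machinery of Buoso--Lamberti.

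Two small points worth tightening. First, the displayed chain ``$P_\xi[u][\varphi]=\mathcal{J}_\xi[T_\xi^{-1}\mathcal{J}_\xi^{-1}\mathcal{J}_\xi u][\varphi]=\lambda\mathcal{J}_\xi[u][\varphi]$'' is not well-formed ($T_\xi$ is compact, $\mathcal{J}_\xi$ is not invertible); the intended equivalence is simply $T_\xi u=\lambda^{-1}u\Leftrightarrow \mathcal{J}_\xi[u]=\lambda^{-1}P_\xi[u]$ in the dual $\Leftrightarrow P_\xi[u][\varphi]=\lambda\mathcal{J}_\xi[u][\varphi]$ for all $\varphi$, which is the pulled-back weak form. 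Second, you only argue coercivity of $P_\xi$ near $\xi=\mathrm{Id}$, whereas the lemma asserts analyticity on all of $\mathcal{A}_\Omega$. This is easily fixed: your first step already shows that $\langle\cdot,\cdot\rangle_\xi$ is the push-forward of the genuine inner product $P[\cdot][\cdot]$ on $H^2_*(\xi(\Omega))$ via the homeomorphism $\Phi_\xi$, so $P_\xi$ is an equivalent inner product on $H^2_*(\Omega)$ for \emph{every} $\xi\in\mathcal{A}_\Omega$, and the Riesz operator is invertible throughout.
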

\begin{proof}
See \cite[Lemma 3.2]{bula2013} and also \cite{buosohinged}.
\end{proof}

We also need the next statement whose proof can be done following step-by-step those of \cite[Lemmas 2.4 and 2.5]{buosotesi} (see also \cite[Lemma 7]{buosopalinuro} and \cite[Lemma 4.4]{bupro2015}).

\begin{lemma}\label{fondamentale}
Let $\theta\in C^2_b(\Omega;\mathbb{R}^2)$ be of the form $\theta(x,y)=(0,\tau(x)+y\delta(x))$, where $\tau,\delta$ are as in (\ref{psi}). Then for all $w_1,w_2\in H^4(\Omega)$ we have
\begin{multline}
\label{formulafondamentale}
d|_{\xi={\rm Id}}M_{\xi}[w_1][w_2][\theta]=\int_{\partial\Omega}(D^2w_1:D^2w_2)\theta\cdot\nu d\mathcal{H}^1\\
+\int_{\partial\Omega}\left(\mathrm{div}_{\partial\Omega}(\nu\cdot D^2w_1)_{\partial\Omega}\nabla w_2+
\mathrm{div}_{\partial\Omega}(\nu\cdot D^2w_2)_{\partial\Omega}\nabla w_1\right)\cdot\theta d\mathcal{H}^1\\
+\int_{\partial\Omega}\left(\frac{\partial\Delta w_1}{\partial\nu}\nabla w_2+\frac{\partial\Delta w_2}{\partial\nu}\nabla w_1\right)\cdot\theta d\mathcal{H}^1
-\int_{\Omega}\left(\Delta^2w_1\nabla w_2+\Delta^2w_2\nabla w_1\right)\cdot\theta d\mathcal{H}^1\\
-\int_{\partial\Omega}\left(\frac{\partial^2w_1}{\partial\nu^2}\nabla w_2+\frac{\partial^2w_2}{\partial\nu^2}\nabla w_1\right)\cdot\frac{\partial\theta}{\partial\nu}d\mathcal{H}^1
-\int_{\partial\Omega}\left(\frac{\partial^2w_1}{\partial\nu^2}\frac{\partial\ }{\partial\nu}\nabla w_2+\frac{\partial^2w_2}{\partial\nu^2}\frac{\partial\ }{\partial\nu}\nabla w_1\right)\cdot\theta d\mathcal{H}^1,
\end{multline}
and
\begin{multline}\label{formulafondamentale2}
d|_{\xi={\rm Id}}N_{\xi}[w_1][w_2][\theta]=
\int_{\partial\Omega}\Delta w_1\Delta w_2\theta\cdot\nu d\mathcal{H}^1\\
+\int_{\partial\Omega}\left(\frac{\partial\Delta w_1}{\partial\nu}\nabla w_2+\frac{\partial\Delta w_2}{\partial\nu}\nabla w_1\right)\cdot\theta d\mathcal{H}^1
-\int_{\Omega}\left(\Delta^2w_1\nabla w_2+\Delta^2w_2\nabla w_1\right)\cdot\theta d\mathcal{H}^1\\
-\int_{\partial\Omega}\left(\Delta w_1\nabla w_2+\Delta w_2\nabla w_1\right)\cdot\frac{\partial\theta}{\partial\nu}d\mathcal{H}^1
-\int_{\partial\Omega}\left(\Delta w_1\frac{\partial\ }{\partial\nu}\nabla w_2+\Delta w_2\frac{\partial\ }{\partial\nu}\nabla w_1\right)\cdot\theta d\mathcal{H}^1.
\end{multline}
\end{lemma}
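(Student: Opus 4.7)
The statement is a standard shape-differentiation identity for the two biharmonic quadratic forms; the plan follows the line of \cite[Lemmas 2.4--2.5]{buosotesi}. First, via the change of variables $y=\xi(x)$ the pulled-back forms may be rewritten as integrals over the deformed domain,
\[
M_\xi[w_1][w_2]=\int_{\xi(\Omega)}D^2u_\xi:D^2v_\xi\,dA,\qquad N_\xi[w_1][w_2]=\int_{\xi(\Omega)}\Delta u_\xi\,\Delta v_\xi\,dA,
\]
with $u_\xi=w_1\circ\xi^{-1}$ and $v_\xi=w_2\circ\xi^{-1}$. In this form the dependence on $\xi$ is split between the domain of integration and the integrand, which is exactly the setup of the Hadamard-type formulas and bypasses the cumbersome differentiation of $|\det\nabla\xi|$.

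Next, I apply the classical shape-derivative rule
\[
\frac{d}{dt}\bigg|_{t=0}\int_{\Omega_t}f_t\,dA=\int_\Omega f'\,dA+\int_{\partial\Omega}f_0\,\theta\cdot\nu\,d\mathcal{H}^1
\]
to $f_t=D^2u_{{\rm Id}+t\theta}:D^2v_{{\rm Id}+t\theta}$ (and the analogous choice for $N$), where $\theta$ is the initial velocity. This produces the ``transport'' boundary term $\int_{\partial\Omega}(D^2w_1:D^2w_2)\theta\cdot\nu\,d\mathcal{H}^1$ (resp.\ its $\Delta w_1\Delta w_2$ counterpart) plus a volume contribution $\int_\Omega(D^2u':D^2w_2+D^2w_1:D^2v')\,dA$. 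The ``Lagrangian'' derivatives $u',v'$ are read off from the defining identity $u_\xi\circ\xi=w_1$: differentiating at $\xi={\rm Id}$ yields $u'=-\nabla w_1\cdot\theta$ and, symmetrically, $v'=-\nabla w_2\cdot\theta$, which lie in $H^2(\Omega)$ thanks to the hypothesis $w_i\in H^4(\Omega)$.

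The main technical step is to convert these volume integrals into the boundary contributions appearing in \eqref{formulafondamentale} and \eqref{formulafondamentale2}, by means of a twofold integration by parts. I use the Green-type identities
\begin{align*}
\int_\Omega D^2u:D^2v\,dA&=\int_\Omega\Delta^2u\,v\,dA+\int_{\partial\Omega}\!\!\left[\tfrac{\partial^2u}{\partial\nu^2}\tfrac{\partial v}{\partial\nu}+\mathrm{div}_{\partial\Omega}(\nu\cdot D^2u)_{\partial\Omega}\,v-\tfrac{\partial\Delta u}{\partial\nu}v\right]d\mathcal{H}^1,\\
\int_\Omega\Delta u\,\Delta v\,dA&=\int_\Omega\Delta^2u\,v\,dA+\int_{\partial\Omega}\!\!\left[\Delta u\,\tfrac{\partial v}{\partial\nu}-\tfrac{\partial\Delta u}{\partial\nu}v\right]d\mathcal{H}^1,
\end{align*}
valid for $u\in H^4(\Omega)$ and $v\in H^2(\Omega)$, and substitute $v=\nabla w_2\cdot\theta$ in one half of the volume terms and $v=\nabla w_1\cdot\theta$ in the symmetric one.

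The real work, and the main obstacle, is the bookkeeping in this last step: on $\partial\Omega$ one must decompose $\nabla(\nabla w_i\cdot\theta)$ into its normal part $(\partial/\partial\nu)(\nabla w_i\cdot\theta)\,\nu$ and its tangential part, so as to separate the contributions proportional to $\partial\theta/\partial\nu$ from those proportional to $\theta$ itself, and then integrate the tangential portion by parts on $\partial\Omega$ to generate the surface-divergence term $\mathrm{div}_{\partial\Omega}(\nu\cdot D^2w_i)_{\partial\Omega}$. Collecting all contributions and adding the transport boundary term from the first step yields the two claimed identities \eqref{formulafondamentale} and \eqref{formulafondamentale2}.
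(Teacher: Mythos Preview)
Your outline follows the same strategy as the paper, which in turn refers to \cite[Lemmas 2.4--2.5]{buosotesi} for the detailed computation: Hadamard's formula, the material derivative $u'=-\nabla w_1\cdot\theta$, and the two-step integration by parts via the biharmonic Green identities are precisely the right ingredients.

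There is, however, one genuine gap. The domain $\Omega=(0,\pi)\times(-\ell,\ell)$ is only piecewise smooth, whereas the derivation you invoke from \cite{buosotesi} assumes $\partial\Omega\in C^4$. On a rectangle, every application of the Tangential Divergence Theorem --- in particular your final ``integrate the tangential portion by parts on $\partial\Omega$'' step that produces the $\mathrm{div}_{\partial\Omega}(\nu\cdot D^2w_i)_{\partial\Omega}$ term, and already the Green identity you state for $\int_\Omega D^2u:D^2v$ --- generates additional point contributions at the four corners, because the unit normal $\nu$ jumps there. Your argument as written does not account for these. The paper's key additional observation, which you omit, is that all such corner terms vanish for the specific deformation field $\theta(x,y)=(0,\tau(x)+y\delta(x))$: since $\tau(0)=\tau(\pi)=\delta(0)=\delta(\pi)=0$, the field $\theta$ (and hence the test function $\nabla w_i\cdot\theta$) vanishes at each corner, killing the extra contributions. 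Without this remark the passage from the smooth-domain identities in \cite{buosotesi} to the rectangle is not justified, and \eqref{formulafondamentale}--\eqref{formulafondamentale2} would not follow.
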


The proof of these results in \cite{buosotesi} is lengthy and delicate, covering a number of pages. There, the domain $\Omega$ is assumed to be of class
at least $C^4$, while in Lemma \ref{fondamentale} it is only piecewise smooth. This translates in the appearance of some corner-terms whenever the
Tangential Divergence Theorem is used (namely, at pages 21, 23, 24, 27 in \cite{buosotesi}). However, those terms happen to vanish due to the
particular form of $\theta$, thereby yielding formulas (\ref{formulafondamentale}) and (\ref{formulafondamentale2}).

\begin{proof}[Proof of Theorem \ref{duesetted}.]
By Lemma~\ref{changelemmad}, $T_{\xi }$
is selfadjoint with respect to the scalar product \eq{endow2} and both $T_{\xi }$ and $\langle \cdot ,\cdot \rangle _{\xi} $ depend real-analytically
on $\xi$. Thus, by applying \cite[Theorem 2.30]{lala2004}, it follows that ${\mathcal{A}}_\lambda$ is an open set in $C^2_b(\Omega;{\mathbb{R}}^2)$
and the map which takes $\xi\in {\mathcal { A}}_\lambda$ to $\lambda[\xi(\Omega)]^{-1}$ is real-analytic and therefore also $\xi\mapsto\lambda[\xi(\Omega)]$
is real-analytic (see also \cite[Theorem 3.21]{lala2004}).

It remains to  prove formula (\ref{derivd3}). Let $v_\lambda$ be as in the statement so that $v_\lambda\in H^4(\Omega)$. By
\cite[Theorem 2.30]{lala2004}, it follows that
\begin{equation*}
{\rm d|}_{\xi ={\rm Id}}\lambda[\xi(\Omega)]^{-1}[\zeta]=\langle {\rm d|}_{\xi ={\rm Id}}T_{\xi }[v_\lambda][\zeta], v_\lambda\rangle
\end{equation*}
for all  $\zeta\in C^2_b( \Omega\, ;{\mathbb{R}}^2) $.
We have
$$\langle {\rm d|}_{\xi ={\rm Id} } T_{\xi }[v_\lambda][\zeta], v_\lambda\rangle =d|_{\xi={\rm Id}}\mathcal{J}_{\xi}[v_\lambda][v_\lambda][\zeta]
-\lambda[\Omega]^{-1}d|_{\xi={\rm Id}}P_{\xi}[v_\lambda][v_\lambda][\zeta].$$

Moreover, by standard calculus, ${\rm d}|_{\xi ={\rm Id}} \left({\mathrm{det}} \nabla\xi\right)[\zeta]={\mathrm{div}}\zeta$, and therefore
\begin{equation*}
d|_{\xi={\rm Id}}\mathcal{J}_{\xi}[v_\lambda][v_\lambda][\zeta]=\int_{\Omega}v_\lambda^2\mathrm{div}\zeta dA.
\end{equation*}
Using (\ref{equivalent}), Lemma \ref{fondamentale}, the fact that $(1-\sigma)\frac{\partial^2 v_\lambda}{\partial\nu^2}+\sigma\Delta v_\lambda=0$
on $\partial\Omega$, observing that
\begin{equation*}
\int_{\Omega}\nabla(v_\lambda^2)\cdot\zeta dA=\int_{\partial\Omega}v_\lambda^2\zeta\cdot\nu d\mathcal{H}^1-\int_{\Omega}v_\lambda^2\mathrm{div}\zeta dA,
\end{equation*}
and taking $\zeta=\psi-{\rm Id}$ with $\psi$ in the form (\ref{psi}), we get
\begin{eqnarray*}
d|_{\phi={\rm Id}}\lambda[\phi(\Omega)][\psi-{\rm Id}]
&=& -\lambda[\Omega]\int_0^\pi\left((1-\sigma)|D^2v|^2+\sigma(\Delta v)^2-\lambda[\Omega]v^2\right)|_{y=-\ell}\left(\tau(x)-\ell\delta(x)\right) dx \label{derivd4}\\
 & &+\lambda[\Omega]\int_0^\pi\left((1-\sigma)|D^2v|^2+\sigma(\Delta v)^2-\lambda[\Omega]v^2\right)|_{y=\ell}\left(\tau(x)+\ell\delta(x)\right)dx.\notag
\end{eqnarray*}
Now we observe that all the eigenfunctions show symmetry properties, i.e., they are either even or odd in the $y$ variable, and symmetric or skew-symmetric
with respect to $x$, see Theorem \ref{eigenvalue}. In particular this implies that $|D^2 v|^2$, $(\Delta v)^2$ and $v^2$ are equal for $y=\pm\ell$ and symmetric with respect to $x=\pi/2$.
This proves formula (\ref{derivd3}) and completes the proof of Theorem \ref{duesetted}.
\end{proof}

\section{Some technical lemmas}\label{lemmass}

In this section we quote some results that were used for the numerical computations of the derivatives of the eigenvalues in Table \ref{tableder}.

\begin{lemma}\label{calcoli}
Let $v_{m,k}$ be the eigenfunction associated with the eigenvalue $\mu_{m,k}$ as in Theorem \ref{eigenvalue}. Then
$$
(1-\sigma)|D^2v_{m,k}|^2+\sigma(\Delta v_{m,k})^2-\mu_{m,k}v^2_{m,k}|_{y=\ell}=A_{m,k}\sin^2(mx)+B_{m,k}\cos^2(mx),
$$
where
$$
A_{m,k}=4\mu_{m,k}(m^4-\sigma^2m^4-\mu_{m,k})
$$
and
$$
B_{m,k}=4(1-\sigma)m^2\mu_{m,k}(\mu_{m,k}^{1/2}+m^2)\left(\frac{\mu_{m,k}^{1/2}-(1-\sigma)m^2}{\mu_{m,k}^{1/2}+(1-\sigma)m^2}\right)^2
\tanh^2\left(\ell\sqrt{m^2+\mu_{m,k}^{1/2}}\right).
$$
\end{lemma}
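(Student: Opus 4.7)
The plan is to exploit the separation-of-variables structure of $v_{m,k}$ provided in Theorem~\ref{eigenvalue}. Writing $v_{m,k}(x,y)=Y(y)\sin(mx)$ for the appropriate $y$-profile $Y=Y_{m,k}$ (cosh–cosh in case $(i)$, cosh–cos in case $(ii)$), direct differentiation gives
$$v_{xx}=-m^2Y\sin(mx),\quad v_{yy}=Y''\sin(mx),\quad v_{xy}=mY'\cos(mx),$$
so that
$$|D^2v|^2=\bigl[m^4Y^2+(Y'')^2\bigr]\sin^2(mx)+2m^2(Y')^2\cos^2(mx),\qquad (\Delta v)^2=(Y''-m^2Y)^2\sin^2(mx).$$
This already produces the claimed decomposition with
$$A(y)=(1-\sigma)\bigl[m^4Y^2+(Y'')^2\bigr]+\sigma(Y''-m^2Y)^2-\mu Y^2,\qquad B(y)=2(1-\sigma)m^2(Y')^2.$$
The task then reduces to evaluating $A$ and $B$ at $y=\ell$.

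For $A_{m,k}$, the key ingredient is the free-edge boundary condition $v_{yy}+\sigma v_{xx}=0$ on $y=\pm\ell$, which translates into the pointwise identity $Y''(\ell)=\sigma m^2Y(\ell)$. Plugging this into $A(\ell)$ and simplifying algebraically,
$$A(\ell)=Y(\ell)^2\bigl[(1-\sigma)m^4(1+\sigma^2)+\sigma(1-\sigma)^2m^4-\mu\bigr]=Y(\ell)^2\bigl[(1-\sigma^2)m^4-\mu\bigr].$$
Evaluating the explicit form of $Y$ at $y=\ell$ collapses both the cosh/cosh ratio and the cosh/cos ratio to $1$, so $Y(\ell)=[\mu^{1/2}-(1-\sigma)m^2]+[\mu^{1/2}+(1-\sigma)m^2]=2\mu^{1/2}$, yielding $A_{m,k}=4\mu_{m,k}(m^4-\sigma^2m^4-\mu_{m,k})$ as claimed.

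The computation of $B_{m,k}$ is where the characteristic equations of Proposition~\ref{defunzioni} enter. Setting $C_1=\mu^{1/2}-(1-\sigma)m^2$, $C_2=\mu^{1/2}+(1-\sigma)m^2$, and differentiating the explicit $Y$ termwise, one finds in case $(ii)$
$$Y'(\ell)=C_1\sqrt{\mu^{1/2}+m^2}\,\tanh\bigl(\ell\sqrt{\mu^{1/2}+m^2}\bigr)-C_2\sqrt{\mu^{1/2}-m^2}\,\tan\bigl(\ell\sqrt{\mu^{1/2}-m^2}\bigr),$$
and an analogous expression (with a second $\tanh$ instead of $\tan$) in case $(i)$. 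The relation $\Upsilon^m(\mu_{m,k},\ell)=0$ (resp.\ $\Phi^m(\mu_{m,1},\ell)=0$) then lets me eliminate the $\tan$ (resp.\ the second $\tanh$) term in favor of $\tanh(\ell\sqrt{\mu^{1/2}+m^2})$, and after using $C_1+C_2=2\mu^{1/2}$ this collapses to the clean form
$$Y'(\ell)=\frac{2\mu^{1/2}C_1}{C_2}\sqrt{\mu^{1/2}+m^2}\,\tanh\bigl(\ell\sqrt{\mu^{1/2}+m^2}\bigr).$$
Squaring and multiplying by $2(1-\sigma)m^2$ yields $B_{m,k}$ in the stated form.

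The only real obstacle is the bookkeeping in the final step, namely recognizing that in both cases $(i)$ and $(ii)$ the characteristic equation has exactly the structure that produces the ratio $(C_1/C_2)^2$ multiplying the surviving $\tanh^2$ factor; the elementary identity $C_1+C_2=2\mu^{1/2}$ is what makes the two boundary contributions to $Y'(\ell)$ merge into a single monomial rather than a sum. Everything else is routine differentiation and use of the Navier/free-edge boundary conditions.
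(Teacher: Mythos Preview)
Your proof is correct and follows essentially the same route as the paper's: write $v_{m,k}=Y(y)\sin(mx)$, compute the second derivatives, evaluate at $y=\ell$, and use the characteristic equation $\Upsilon^m=0$ (resp.\ $\Phi^m=0$) from Proposition~\ref{defunzioni} to collapse the two-term expression for $Y'(\ell)$ into a single $\tanh$ term via the identity $C_1+C_2=2\mu^{1/2}$. The only cosmetic difference is that for $A_{m,k}$ you invoke the free-edge boundary condition $Y''(\ell)=\sigma m^2 Y(\ell)$ directly, whereas the paper computes $(v_{m,k})_{yy}|_{y=\ell}$ explicitly from the eigenfunction formula and obtains $2\sigma m^2\mu_{m,k}^{1/2}\sin(mx)$ --- which is of course the same thing, since the eigenfunction satisfies that boundary condition.
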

\begin{proof} We have
$$
v_{m,k}|_{y=\ell}=2\mu_{m,k}^{1/2}\sin(mx),\qquad (v_{m,k})_{xx}|_{y=\ell}=-2m^2\mu_{m,k}^{1/2}\sin(mx),
$$
\begin{eqnarray*}
(v_{m,k})_{yy}|_{y=\ell} &=& \left((\mu_{m,k}^{1/2}-(1-\sigma)m^2)(\mu_{m,k}^{1/2}+m^2)
+(\mu_{m,k}^{1/2}+(1-\sigma)m^2)(m^2-\mu_{m,k}^{1/2}) \right)\sin(mx)\\
 &=& 2m^2\sigma\mu_{m,k}^{1/2}\sin(mx),
\end{eqnarray*}
and, if $k\ge2$,
\begin{multline*}
(v_{m,k})_{xy}|_{y=\ell}=\left((\mu_{m,k}^{1/2}-(1-\sigma)m^2)\sqrt{\mu_{m,k}^{1/2}+m^2}\tanh\left(\ell\sqrt{\mu_{m,k}^{1/2}+m^2}\right)\right.\\
\left.-(\mu_{m,k}^{1/2}+(1-\sigma)m^2)\sqrt{\mu_{m,k}^{1/2}-m^2}\tan\left(\ell\sqrt{\mu_{m,k}^{1/2}-m^2}\right) \right)m\cos(mx)\\
=m\left(1+\frac{\mu_{m,k}^{1/2}-(1-\sigma)m^2}{\mu_{m,k}^{1/2}+(1-\sigma)m^2}\right)(\mu_{m,k}^{1/2}-(1-\sigma)m^2)\sqrt{\mu_{m,k}^{1/2}+m^2}\tanh\left(\ell\sqrt{\mu_{m,k}^{1/2}+m^2}\right)\cos(mx)\\
=2m\mu_{m,k}^{1/2}\frac{\mu_{m,k}^{1/2}-(1-\sigma)m^2}{\mu_{m,k}^{1/2}+(1-\sigma)m^2}\sqrt{\mu_{m,k}^{1/2}+m^2}\tanh\left(\ell\sqrt{\mu_{m,k}^{1/2}+m^2}\right)\cos(mx),
\end{multline*}
where in the second equality we used the fact that $\mu_{m,k}$ is the $(k\!-\!1)$th positive zero of the function $\lambda\mapsto\Upsilon^m(\lambda,\ell)$,
see Proposition \ref{defunzioni}. By collecting terms, the proof is concluded observing that
$$\begin{array}{cc}
(1-\sigma)|D^2v_{m,k}|^2+\sigma(\Delta v_{m,k})^2-\mu_{m,k}v^2_{m,k}|_{y=\ell}\\
=(v_{m,k})^2_{xx}|_{y=\ell}+(v_{m,k})^2_{yy}|_{y=\ell}+2\sigma(v_{m,k})_{xx}(v_{m,k})_{yy}|_{y=\ell}-\mu_{m,k}v^2_{m,k}|_{y=\ell}+2(1-\sigma)(v_{m,k})_{xy}^2|_{y=\ell}.
\end{array}$$
The case $k=1$ is analogous.
\end{proof}

\begin{lemma}\label{torsionali}
Let $w_{n,j}$ be the eigenfunction associated with the eigenvalue $\nu_{n,j}$ as in Theorem \ref{eigenvalue}. Then
$$
(1-\sigma)|D^2w_{n,j}|^2+\sigma(\Delta w_{n,j})^2-\nu_{n,j}w^2_{n,j}|_{y=\ell}=\tilde A_{n,j}\sin^2(nx)+\tilde B_{n,j}\cos^2(nx),
$$
where
$$
\tilde A_{n,j}=4\nu_{n,j}(n^4-\sigma^2n^4-\nu_{n,j})
$$
and
$$
\tilde B_{n,j}=4(1-\sigma)n^2\nu_{n,j}(\nu_{n,j}^{1/2}+n^2)\left(\frac{\nu_{n,j}^{1/2}-(1-\sigma)n^2}{\nu_{n,j}^{1/2}+(1-\sigma)n^2}\right)^2
\coth^2\left(\ell\sqrt{n^2+\nu_{n,j}^{1/2}}\right).
$$
\end{lemma}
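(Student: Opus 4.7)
The plan is to mirror the proof of Lemma \ref{calcoli} almost verbatim, replacing the longitudinal eigenfunction of Theorem \ref{eigenvalue}(ii) by the torsional eigenfunction of Theorem \ref{eigenvalue}(iii) (case (iv), involving two hyperbolic factors, is analogous and simpler). First I would compute the four boundary traces $w_{n,j}|_{y=\ell}$, $(w_{n,j})_{xx}|_{y=\ell}$, $(w_{n,j})_{yy}|_{y=\ell}$, $(w_{n,j})_{xy}|_{y=\ell}$ by direct differentiation. Since $\sinh(\ell\alpha)/\sinh(\ell\alpha)=\cosh(\ell\alpha)/\cosh(\ell\alpha)=1$ at $y=\ell$, and since two $y$-derivatives of the sinh-part and of the sine-part produce the same factors $(\nu_{n,j}^{1/2}+n^2)$ and $-(\nu_{n,j}^{1/2}-n^2)$ that arose in the cosh/cos case, the traces of $w_{n,j}$, $(w_{n,j})_{xx}$ and $(w_{n,j})_{yy}$ coincide (with $n,\nu_{n,j}$ in place of $m,\mu_{m,k}$) with the expressions derived in Lemma \ref{calcoli}; in particular the identity $(w_{n,j})_{yy}|_{y=\ell}=2\sigma n^2\nu_{n,j}^{1/2}\sin(nx)$ comes from exactly the same algebraic cancellation.

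The only real novelty is the mixed derivative. Since $\partial_y[\sinh(\alpha y)/\sinh(\alpha\ell)]|_{y=\ell}=\alpha\coth(\alpha\ell)$ (rather than $\alpha\tanh(\alpha\ell)$), and $\partial_y[\sin(\beta y)/\sin(\beta\ell)]|_{y=\ell}=\beta\cot(\beta\ell)$ (rather than $-\beta\tan(\beta\ell)$), the raw expression for $(w_{n,j})_{xy}|_{y=\ell}$ involves both $\coth\bigl(\ell\sqrt{\nu_{n,j}^{1/2}+n^2}\bigr)$ and $\cot\bigl(\ell\sqrt{\nu_{n,j}^{1/2}-n^2}\bigr)$. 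The next step is to eliminate the cotangent by invoking the characteristic equation $\Psi^n(\nu_{n,j},\ell)=0$ from Proposition \ref{defunzioni}(iii): solving it for $\cot\bigl(\ell\sqrt{\nu_{n,j}^{1/2}-n^2}\bigr)$ as a multiple of $\coth\bigl(\ell\sqrt{\nu_{n,j}^{1/2}+n^2}\bigr)$ and substituting, the same telescoping factorization used in Lemma \ref{calcoli} yields
$$
(w_{n,j})_{xy}|_{y=\ell}=2n\nu_{n,j}^{1/2}\,\frac{\nu_{n,j}^{1/2}-(1-\sigma)n^2}{\nu_{n,j}^{1/2}+(1-\sigma)n^2}\sqrt{\nu_{n,j}^{1/2}+n^2}\coth\!\left(\ell\sqrt{\nu_{n,j}^{1/2}+n^2}\right)\cos(nx),
$$
i.e.\ the torsional counterpart, with $\tanh\mapsto\coth$, of the longitudinal formula.

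Finally, I would plug the four traces into the pointwise identity
$$
(1-\sigma)|D^2w_{n,j}|^2+\sigma(\Delta w_{n,j})^2-\nu_{n,j}w_{n,j}^2 = w_{n,j,xx}^2+w_{n,j,yy}^2+2\sigma w_{n,j,xx}w_{n,j,yy}-\nu_{n,j}w_{n,j}^2+2(1-\sigma)w_{n,j,xy}^2,
$$
evaluated at $y=\ell$, and collect coefficients. The $\sin^2(nx)$ contribution, coming from the first four terms, reduces by the same arithmetic as in Lemma \ref{calcoli} to $\tilde A_{n,j}=4\nu_{n,j}(n^4-\sigma^2 n^4-\nu_{n,j})$, while the $\cos^2(nx)$ contribution comes solely from $2(1-\sigma)w_{n,j,xy}^2|_{y=\ell}$ and reproduces $\tilde B_{n,j}$. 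The main obstacle is the $\cot\leftrightarrow\coth$ conversion via $\Psi^n=0$: although algebraically parallel to the $\tan\leftrightarrow\tanh$ step in Lemma \ref{calcoli}, one must be careful with the different sign pattern of $\Psi^n$ relative to $\Upsilon^m$ so that the positive sign in front of $\coth$ (and hence the overall sign of $\tilde B_{n,j}$) comes out correctly. For case (iv) no such hyperbolic-trigonometric conversion is needed and the same scheme, using $\Gamma^n(\nu_{n,1},\ell)=0$, delivers the result directly.
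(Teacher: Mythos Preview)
Your proposal is correct and is exactly the approach the paper takes: its proof of Lemma~\ref{torsionali} consists of the single line ``The proof is identical to that of Lemma~\ref{calcoli},'' and your write-up spells out precisely that identification, including the key step of eliminating $\cot\bigl(\ell\sqrt{\nu_{n,j}^{1/2}-n^2}\bigr)$ in favor of $\coth\bigl(\ell\sqrt{\nu_{n,j}^{1/2}+n^2}\bigr)$ via $\Psi^n(\nu_{n,j},\ell)=0$, which is the exact analogue of the $\Upsilon^m=0$ substitution used for $(v_{m,k})_{xy}$.
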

\begin{proof}
The proof is identical to that  of Lemma \ref{calcoli}.
\end{proof}

Note that from \eq{lowerbound} we deduce that $A_{m,k},\tilde A_{n,j}<0$ and $B_{m,k},\tilde B_{n,j}>0$ for all $m,k,n,j$.

Now we compute the $H^2_*(\Omega)$-norms, see \eq{neumannform1}, of the eigenfunctions characterized in Theorem \ref{eigenvalue}.

\begin{lemma}
Let $\mu_{m,k},\nu_{n,j}$ be the eigenvalues of problem (\ref{bridge}), and let $v_{m,k},w_{n,j}$ be the respective eigenfunctions as described in
Theorem \ref{eigenvalue}. Let also
$$\alpha_{m,k}=\left|m^2-\sqrt{\mu_{m,k}}\right|,  \beta_{m,k}=m^2+\sqrt{\mu_{m,k}}\quad\mbox{and}\quad
\tilde\alpha_{n,j}=\left|n^2-\sqrt{\nu_{n,j}}\right| , \tilde\beta_{n,j}=n^2+\sqrt{\nu_{n,j}}.$$
Then
\begin{eqnarray*}
||v_{m,1}||^2 &=& \frac{\ell\pi\mu_{m,1}}2\frac{(\sigma m^2-\alpha_{m,1})^2}{\cosh^2(\ell\sqrt{\beta_{m,1}})}+
\frac{\ell\pi\mu_{m,1}}2\frac{(\beta_{m,1}-\sigma m^2)^2}{\cosh^2(\ell\sqrt{\alpha_{m,1}})}\\
 & & +\pi\mu_{m,1}(\sigma m^2-\alpha_{m,1})^2\sqrt{\beta_{m,1}}\tanh(\ell\sqrt{\beta_{m,1}})\left(\frac{m^2}{m^4-\mu_{m,1}}+\frac{4(1-\sigma) m^2}{\mu_{m,1}-(1-\sigma)^2m^4}\right),
\end{eqnarray*}
and, for $k>1$,
\begin{eqnarray*}
||v_{m,k}||^2 &=& \frac{\ell\pi\mu_{m,k}}2 \frac{(\sigma m^2+\alpha_{m,k})^2}{\cosh^2(\ell\sqrt{\beta_{m,k}})}
+\frac{\ell\pi\mu_{m,k}}2 \frac{(\beta_{m,k}-\sigma m^2)^2}{\cos^2(\ell\sqrt{\alpha_{m,k}})}\\
&& +\pi\mu_{m,k}(\sigma m^2+\alpha_{m,k})^2\sqrt{\beta_{m,k}}\tanh(\ell\sqrt{\beta_{m,k}})\left(\frac{m^2}{m^4-\mu_{m,k}}+\frac{4(1-\sigma) m^2}{\mu_{m,k}-(1-\sigma)^2m^4}\right).
\end{eqnarray*}
Similarly,
\begin{eqnarray*}
||w_{n,1}||^2 &=& -\frac{\ell\pi\nu_{n,1}}2
\frac{(\sigma n^2-\tilde\alpha_{n,1})^2}{\sinh^2(\ell\sqrt{\tilde\beta_{n,1}})}-\frac{\ell\pi\nu_{n,1}}2\frac{(\tilde\beta_{n,1}-\sigma n^2)^2}{\sinh^2(\ell\sqrt{\tilde\alpha_{n,1}})}\\
& & +\pi\nu_{n,1}(\sigma n^2-\tilde\alpha_{n,1})^2\sqrt{\tilde\beta_{n,1}}
\coth(\ell\sqrt{\tilde\beta_{n,1}})\left(\frac{n^2}{n^4-\nu_{n,1}}+\frac{4(1-\sigma) n^2}{\nu_{n,1}-(1-\sigma)^2n^4}\right),
\end{eqnarray*}
and, for $j>1$,
\begin{eqnarray*}
||w_{n,j}||^2 &=& -\frac{\ell\pi\nu_{n,j}}2 \frac{(\sigma n^2+\tilde\alpha_{n,j})^2}{\sinh^2(\ell\sqrt{\tilde\beta_{n,j}})}
+\frac{\ell\pi\nu_{n,j}}2 \frac{(\tilde\beta_{n,j}-\sigma n^2)^2}{\sin^2(\ell\sqrt{\tilde\alpha_{n,j}})}\\
&&+\pi\nu_{n,j}(\sigma n^2+\tilde\alpha_{n,j})^2\sqrt{\tilde\beta_{n,j}}
\coth(\ell\sqrt{\tilde\beta_{n,j}})\left(\frac{n^2}{n^4-\nu_{n,j}}+\frac{4(1-\sigma) n^2}{\nu_{n,j}-(1-\sigma)^2n^4}\right).
\end{eqnarray*}
\end{lemma}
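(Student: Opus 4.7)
The plan is to reduce the computation of the four norms to the evaluation of a single $L^2$ integral by invoking the eigenvalue equation, and then to evaluate that integral in each case using elementary integrals of products of hyperbolic and trigonometric functions.

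First I would test the weak formulation of \eq{bridge} against the eigenfunction itself; this yields $\|v_{m,k}\|^2=\mu_{m,k}\|v_{m,k}\|_{L^2(\Omega)}^2$ and $\|w_{n,j}\|^2=\nu_{n,j}\|w_{n,j}\|_{L^2(\Omega)}^2$. Since every eigenfunction in Theorem \ref{eigenvalue} factorizes as $F(y)\sin(mx)$ or $F(y)\sin(nx)$, and $\int_0^\pi\sin^2(mx)\,dx=\pi/2$, the task reduces to computing $\int_{-\ell}^\ell F(y)^2\,dy$ in each of the four cases.

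Take as a model the case $v_{m,k}$ with $k\geq 2$. Write $F(y)=A\cosh(\beta y)/\cosh(\beta\ell)+B\cos(\alpha y)/\cos(\alpha\ell)$ with $A=\mu_{m,k}^{1/2}-(1-\sigma)m^2$, $B=\mu_{m,k}^{1/2}+(1-\sigma)m^2$, $\alpha=\sqrt{\mu_{m,k}^{1/2}-m^2}$ and $\beta=\sqrt{\mu_{m,k}^{1/2}+m^2}$, so that $\alpha^2+\beta^2=2\mu_{m,k}^{1/2}$, $B^2-A^2=4(1-\sigma)m^2\mu_{m,k}^{1/2}$, $AB=\mu_{m,k}-(1-\sigma)^2m^4$ and $\alpha^2\beta^2=\mu_{m,k}-m^4$. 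Expanding $F^2$ produces three contributions: the two squared terms reduce to the standard integrals $\int_{-\ell}^\ell \cosh^2(\beta y)\,dy=\ell+\sinh(\beta\ell)\cosh(\beta\ell)/\beta$ and $\int_{-\ell}^\ell \cos^2(\alpha y)\,dy=\ell+\sin(\alpha\ell)\cos(\alpha\ell)/\alpha$, whereas the cross integral is computed from the antiderivative $\int \cosh(\beta y)\cos(\alpha y)\,dy=[\beta\sinh(\beta y)\cos(\alpha y)+\alpha\cosh(\beta y)\sin(\alpha y)]/(\alpha^2+\beta^2)$. The crux is then to invoke the eigenvalue equation $\Upsilon^m(\mu_{m,k},\ell)=0$ of Proposition \ref{defunzioni}, which in this notation reads $\alpha B^2\tan(\alpha\ell)=-\beta A^2\tanh(\beta\ell)$: substituting it into both $B^2\tan(\alpha\ell)/\alpha$ (coming from the second square) and $\beta\tanh(\beta\ell)+\alpha\tan(\alpha\ell)$ (from the cross term) collapses the three integrals into two $\ell$-multiples, namely $A^2\ell/\cosh^2(\beta\ell)$ and $B^2\ell/\cos^2(\alpha\ell)$, plus a single multiple of $\beta\tanh(\beta\ell)$ whose coefficient simplifies, via the algebraic identities above, to $2A^2(m^2/(m^4-\mu_{m,k})+4(1-\sigma)m^2/(\mu_{m,k}-(1-\sigma)^2m^4))$. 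Multiplying by $\pi\mu_{m,k}/2$ and observing that $A=\sigma m^2+\alpha_{m,k}$ and $B=\beta_{m,k}-\sigma m^2$ (since $\alpha_{m,k}=\mu_{m,k}^{1/2}-m^2$ for $k\geq 2$) delivers the stated formula.

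The remaining three cases will follow by the same scheme. For $v_{m,1}$ one has $\mu_{m,1}^{1/2}<m^2$, so $\cos(\alpha y)$ is replaced by $\cosh(\tilde\alpha y)$ with $\tilde\alpha=\sqrt{m^2-\mu_{m,1}^{1/2}}$ and $\Phi^m(\mu_{m,1},\ell)=0$ plays the role of $\Upsilon^m=0$; since now $\alpha_{m,1}=m^2-\mu_{m,1}^{1/2}$, one has $A=\sigma m^2-\alpha_{m,1}$, which accounts for the sign change in the stated formula. For the torsional eigenfunctions the cosines and hyperbolic cosines are replaced by the corresponding sines, producing $\sinh^2$ and $\sin^2$ terms (hence $\coth$, $\cot$ in place of $\tanh$, $\tan$, together with a sign flip in $\int\sinh^2$), and $\Psi^n=0$, $\Gamma^n=0$ play the role of $\Upsilon^m=0$ for $j\geq 2$ and $j=1$ respectively. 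The main obstacle will not be any single conceptual step but the bookkeeping across these four parallel calculations: one must track the different signs in the elementary integrals and invoke the correct eigenvalue equation, checking in each case that the same cancellation pattern (the eigenvalue equation simultaneously absorbing the $\alpha$-side square term and the $\alpha\tan$ piece of the cross term, leaving only a single $\beta\tanh$ or $\beta\coth$ contribution) actually goes through.
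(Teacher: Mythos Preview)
Your proposal is correct and follows essentially the same approach as the paper: reduce to the $L^2$ norm via $\|v\|^2=\lambda\|v\|_{L^2}^2$, separate variables and integrate out the $x$-part, then expand the $y$-integral into two squares plus a cross term and use the implicit eigenvalue equation from Proposition~\ref{defunzioni} to collapse the result. The only cosmetic difference is that you take $v_{m,k}$ with $k\geq 2$ as the model case (using $\Upsilon^m=0$) whereas the paper works out $v_{m,1}$ in detail (using $\Phi^m=0$), each declaring the remaining cases analogous.
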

\begin{proof} Here we provide the computations only for $v_{m,1}$, the other cases being similar.
We recall some identities which will be extensively used in the following computations:
$$
\begin{array}{cc}
\int_0^\pi\sin^2(mx)dx=\int_0^\pi\cos^2(mx)dx=\frac\pi 2\qquad\forall m\in\mathbb N ,\\
\int_{-\ell}^\ell\cosh^2(ay)dy = \frac{\sinh(a\ell)\cosh(a\ell)}{a}+\ell,\\
\int_{-\ell}^\ell\cosh(ay)\cosh(by)dy = \frac{2}{a^2-b^2} (a \sinh(a\ell)\cosh(b\ell)- b\sinh(b\ell)\cosh(a\ell)),
\end{array}
$$
for any $a,b\in\mathbb R$ such that $a^2\neq b^2$. We shall also make use of the implicit characterization $(i)$ following Theorem \ref{eigenvalue}
for the eigenvalue $\mu_{m,1}$ associated with the eigenfunction $v_{m,1}$, that is,
$$
(\beta_{m,1}-\sigma m^2)^2\sqrt{\alpha_{m,1}}\tanh(l\sqrt{\alpha_{m,1}})=(\sigma m^2-\alpha_{m,1})^2\sqrt{\beta_{m,1}}\tanh(l\sqrt{\beta_{m,1}}),
$$
where we have set $\alpha_{m,1}=m^2-\sqrt{\mu_{m,1}} $, $\beta_{m,1}=m^2+\sqrt{\mu_{m,1}}$. Hence
$$\begin{array}{rcl}
\frac2\pi \int_{\Omega}v_{m,1}^2 &=& \frac{(\sigma m^2-\alpha_{m,1})^2}{\cosh^2(\ell\sqrt{\beta_{m,1}})}\int_{-\ell}^\ell\cosh^2(y\sqrt{\beta_{m,1}})dy
+\frac{(\beta_{m,1}-\sigma m^2)^2}{\cosh^2(\ell\sqrt{\alpha_{m,1}})}\int_{-\ell}^\ell\cosh^2(y\sqrt{\alpha_{m,1}})dy\\
&& +2\frac{(\sigma m^2-\alpha_{m,1})(\beta_{m,1}-\sigma m^2)}{\cosh(\ell\sqrt{\alpha_{m,1}})\cosh(\ell\sqrt{\beta_{m,1}})}\int_{-\ell}^\ell\cosh(y\sqrt{\alpha_{m,1}})\cosh(y\sqrt{\beta_{m,1}})dy\\
&=& \frac{(\sigma m^2-\alpha_{m,1})^2}{\sqrt{\beta_{m,1}}}\tanh(\ell\sqrt{\beta_{m,1}})+\frac{\ell(\sigma m^2-\alpha_{m,1})^2}{\cosh^2(\ell\sqrt{\beta_{m,1}})}
+\frac{(\beta_{m,1}-\sigma m^2)^2}{\sqrt{\alpha_{m,1}}}\tanh(\ell\sqrt{\alpha_{m,1}})+\frac{\ell(\beta_{m,1}-\sigma m^2)^2}{\cosh^2(\ell\sqrt{\alpha_{m,1}})}\\
&&+4\frac{(\sigma m^2-\alpha_{m,1})(\beta_{m,1}-\sigma m^2)}{\alpha_{m,1}-\beta_{m,1}}\left(\sqrt{\alpha_{m,1}}\tanh(\ell\sqrt{\alpha_{m,1}})-\sqrt{\beta_{m,1}}\tanh(\ell\sqrt{\beta_{m,1}})\right)\\
&=& \frac{\ell(\sigma m^2-\alpha_{m,1})^2}{\cosh^2(\ell\sqrt{\beta_{m,1}})}+\frac{\ell(\beta_{m,1}-\sigma m^2)^2}{\cosh^2(\ell\sqrt{\alpha_{m,1}})}
+(\sigma m^2-\alpha_{m,1})^2\sqrt{\beta_{m,1}}\tanh(\ell\sqrt{\beta_{m,1}})\left(\frac{1}{\beta_{m,1}}+\frac{1}{\alpha_{m,1}}\right)\\
&&+(\sigma m^2-\alpha_{m,1})^2\sqrt{\beta_{m,1}}\tanh(\ell\sqrt{\beta_{m,1}})\left(\frac{\sigma m^2-\alpha_{m,1}}{\beta_{m,1}-\sigma m^2}-\frac{\beta_{m,1}-\sigma m^2}{\sigma m^2-\alpha_{m,1}}\right)\frac{4}{\alpha_{m,1}-\beta_{m,1}}\\
&=&\frac{\ell(\sigma m^2-\alpha_{m,1})^2}{\cosh^2(\ell\sqrt{\beta_{m,1}})}+\frac{\ell(\beta_{m,1}-\sigma m^2)^2}{\cosh^2(\ell\sqrt{\alpha_{m,1}})}
+(\sigma m^2-\alpha_{m,1})^2\sqrt{\beta_{m,1}}\tanh(\ell\sqrt{\beta_{m,1}})\frac{2m^2}{m^4-\mu_{m,1}}\\
&&+(\sigma m^2-\alpha_{m,1})^2\sqrt{\beta_{m,1}}\tanh(\ell\sqrt{\beta_{m,1}})\frac{8(1-\sigma) m^2}{\mu_{m,1}-(1-\sigma)^2m^4}.
\end{array}$$
Since $||v||^2=\lambda||v||_{L^2(\Omega)}^2$ for any eigenfunction $v\in H^2_*(\Omega)$ associated with the eigenvalue $\lambda$, this concludes the proof.\end{proof}

\section{Conclusions}\label{conc}

Following suggestions from the engineering literature \cite{jhnm}, we have set up an analytic approach to the study of the behavior of frequencies in partially
hinged rectangular plates subject to shape variations. This is usually considered a very difficult task, see \cite[Chapter 6]{jhnm}, and several
simplifications in the model are necessary, provided they maintain the behavior of the structure. In particular, our approach does
not take into account the elastic deformation of the plate which is however considered a negligible phenomenon, see again \cite{jhnm}. The present paper
should be considered as a first simple attempt to analyze the torsional stability of a deck with respect to shape variations. We do not pretend it to be
exhaustive, it may certainly be improved. But, at least, it gives very strong hints on the impossibility of having a simple and reliable formula able to
quantify the torsional stability, and raises severe criticisms against the formulas available in literature.\par
From Theorem \ref{eigenvalue} we learn that a longitudinal oscillation may be a linear combination of infinitely many different eigenfunctions.
For instance, a linear combination of eigenfunctions associated with $\mu_{m,k}$ for $k\ge1$ has the form $Y(y)\sin(mx)$ with $Y$ being an even function
of $y$. This means that any such function is approximately of the form $C\sin(mx)$ and, in turn, a longitudinal oscillation of the plate. A similar argument works also for torsional
oscillations. Therefore, a precise characterization of longitudinal or torsional normal modes is not straightforward. However, the extremely larger frequencies
that appear in Table \ref{tableigen} suggest to restrict the attention to the eigenvalues in \eq{comparison}. In this respect, let us recall what happened
at the Tacoma Bridge. A few days prior to its collapse, the project engineer L.R.\ Durkee wrote a letter (see \cite[p.28]{ammann}) describing the
oscillations which were observed so far. He wrote: {\em Altogether, seven different motions have been definitely identified on the main
span of the bridge, and likewise duplicated on the model. These different wave actions consist of motions from the simplest, that of no nodes, to the
most complex, that of seven modes}. Moreover, Farquharson \cite[V-10]{ammann} witnessed the collapse and wrote that {\em the motions, which a
moment before had involved a number of waves (nine or ten) had shifted almost instantly to two}. This means that the instability occurred from the ninth
or tenth longitudinal mode to the second torsional one. Smith and Vincent \cite[p.21]{tac2} state that this shape of torsional oscillations is the
only possible one, see also \cite[Section 1.6]{bookgaz} for further evidence and more historical facts. For these reasons, the eigenvalues in \eq{comparison}
appear more than enough for a reliable stability analysis.\par
On the other hand, from \cite[Section 2.5]{banerjee} we learn that the flutter speed $V_c$ of a bridge depends on the couple of modes considered.
All this suggests to study the torsional stability of the plate for several (but not all) couples of (longitudinal,torsional) normal modes.
When the parameters are fixed according to the Tacoma Narrows Bridge data, see \eq{parameters}, the most interesting ones are $(\bar\mu_{m,1},\bar\nu_{2,2})$ for $m=1,...,14$ since they correspond to low frequencies, they satisfy $\mu<\nu$,
and they involve the second torsional mode.\par
In this paper we have generalized the definition of critical energy (or flutter velocity), see formula \eq{flutterocardgen}, in order to make it also
usable for plates having shapes other than rectangular, see \eq{newkind}. Then we discovered several rules that the critical energy of a couple
of (longitudinal,torsional) eigenvalues is supposed to satisfy: these are summarized in \eq{suggest1}, \eq{suggest3}, \eqref{reasonable}, and \eq{suggest4}.
Nevertheless, the empirical formula \eq{odeigen} shows that these suggestions cannot be simultaneously satisfied.
From Section \ref{flutter} we learn that a fully accepted way to compute the flutter velocity is not available and that the most popular formula coming
from engineering literature is not reliable. Clearly, a formula able to quantify the torsional stability of a deck would be very important for the safety of bridges and any
progress in this direction would be extremely welcome.
But our conclusion is that it cannot be found in an explicit and simple form.\par
Finally, let us mention a related challenging problem. The deck of a bridge is often strengthened with stiffening trusses \cite{haer,xant}.
It would be of great interest to study the variation of the frequencies of oscillations in presence of trusses. In this respect, the framework could
be that of Michell trusses \cite{michell}, see also \cite{truss} for a first naif attempt. Related recent works are \cite{BouButSep,Bouche,bougang},
which deal with second order energies (leading to fourth order equations such as \eq{bridge}) on thin structures such as a plate modeling the deck of
a bridge.\par\bigskip\noindent	
\textbf{Acknowledgments.} The authors are grateful to Luigi Provenzano for several discussions.
The first and second author are partially supported by the Research Project FIR (Futuro in Ricerca) 2013 \emph{Geometrical and
qualitative aspects of PDE's}. The third author is partially supported by the PRIN project {\em Equazioni alle derivate parziali di tipo ellittico e parabolico:
aspetti geometrici, disuguaglianze collegate, e applicazioni}. The three authors are members of the Gruppo Nazionale per l'Analisi Matematica, la Probabilit\`a
e le loro Applicazioni (GNAMPA) of the Istituto Nazionale di Alta Matematica (INdAM).


\begin{thebibliography}{10}
\bibitem{ammann} O.H. Ammann, T. von K\'{a}rm\'{a}n, G.B. Woodruff, {\em The failure of the Tacoma Narrows Bridge}, Federal Works Agency (1941)

\bibitem{antunes} P.R.S. Antunes, F. Gazzola, {\em Convex shape optimization for the least biharmonic Steklov eigenvalue},
ESAIM Control Optim.\ Calc.\ Var.\ 19 (2013), 385-403

\bibitem{banerjee} J.R.\ Banerjee, {\em A simplified method for the free vibration and flutter analysis of bridge decks}, J. Sound Vibration 260 (2003), 829-845

\bibitem{bbfg}
U.\ Battisti, E.\ Berchio, A.\ Ferrero, F.\ Gazzola, {\it Periodic solutions and energy transfer between modes in a nonlinear beam equation},
preprint.

\bibitem{bleichsolo} F.\ Bleich, {\it Dynamic instability of truss-stiffened suspension bridges under wind action}, Proceedings ASCE 74 (1948), 1269-1314

\bibitem{bleich} F.\ Bleich, C.B.\ McCullough, R.\ Rosecrans, G.S.\ Vincent, {\it The mathematical theory of vibration in suspension bridges},
U.S. Dept. of Commerce, Bureau of Public Roads, Washington D.C.\ (1950)

\bibitem{bfg}
E.\ Berchio, A.\ Ferrero, F.\ Gazzola, {\it Structural instability of nonlinear plates modelling suspension bridges:
mathematical answers to some long-standing questions}, Nonlin. Anal. Real World Appl. 28 (2016), 91-125

\bibitem{elvisefilippo} E. Berchio, F. Gazzola, {\em A qualitative explanation of the origin of torsional instability in suspension bridges},
Nonlin. Anal. TMA 121 (2015), 54-72

\bibitem{bgz}
E.\ Berchio, F.\ Gazzola, C.\ Zanini, {\it Which residual mode captures the energy of the dominating mode in second order Hamiltonian
systems?}, SIAM J. Appl. Dyn. Syst. 15 (2016), 338-355

\bibitem{BouButSep} G.\ Bouchitt\'e, G.\ Buttazzo, P.\ Seppecher, {\em Energies with respect to a measure and applications to low dimensional structures},
Calc. Var. 5 (1997), 37-54

\bibitem{Bouche} G.\ Bouchitt\'e, I.\ Fragal\`a, {\em Second-order energies on thin structures: variational theory and non-local effects},
J. Funct. Anal. 204 (2003), 228-267

\bibitem{bougang} G.\ Bouchitt\'e, W.\ Gangbo, P.\ Seppecher, {\em Michell trusses and lines of principal action}, Math. Models Meth. Appl. Sci.
18 (2008), 1571-1603

\bibitem{broer} H.W.\ Broer, M.\ Levi, {\em Geometrical aspects of stability theory for Hill's equations}, Arch. Rat. Mech. Anal. 131 (1995), 225-240

\bibitem{broer2} H.W.\ Broer, C.\ Sim\'o, {\em Resonance tongues in Hill's equations: a geometric approach}, J. Diff. Eq. 166 (2000), 290-327

\bibitem{bucur} D.\ Bucur, G.\ Buttazzo, {\it Variational methods in shape optimization problems}, Progress in Nonlinear Differential Equations
and Their Applications, Vol. 65 , Birkh\"auser (2005)

\bibitem{buosotesi}
D.\ Buoso, {\it Shape sensitivity analysis of the eigenvalues of polyharmonic operators and elliptic systems}, PhD Dissertation,
Universit\`a degli Studi di Padova, Padova, 2015.

\bibitem{buosopalinuro} D.\ Buoso, {\it Analyticity and criticality for the eigenvalues of the biharmonic operator}, Geometric Properties for Parabolic and Elliptic PDE's, Springer INdAM Series\ 176 (2016), 65-85

\bibitem{bula2013}
D.\ Buoso, P.D.\ Lamberti, {\it Eigenvalues of polyharmonic operators on variable domains}, ESAIM COCV\ 19 (2013), no.\ 4, 1225-1235

\bibitem{buosohinged}
D.\ Buoso, P.D.\ Lamberti, {\it Shape deformation for vibrating hinged plates},  Math.\ Methods Appl.\ Sci.\ 37 (2014), no.\ 2, 237-244

\bibitem{bupro2015}
D.\ Buoso, L.\ Provenzano, {\it A few shape optimization results for a biharmonic Steklov problem}, J.\ Diff.\ Eq.\ 259 (2015), 1778-1818

\bibitem{butikov} E.I.\ Butikov, {\it Subharmonic resonances of the parametrically driven pendulum}, J. Phys. A: Math. Gen. 35 (2002), 6209-6231

\bibitem{cazwei} T.\ Cazenave, F.W.\ Weissler, {\em Asymptotically periodic solutions for a class of nonlinear coupled oscillators}, Portugal. Math. 52
(1995), 109-123

\bibitem{cazw} T.\ Cazenave, F.W.\ Weissler, {\it Unstable simple modes of the nonlinear string}, Quart. Appl. Math. 54 (1996), 287-305

\bibitem{cesari} L.\ Cesari, {\em Asymptotic behavior and stability problems in ordinary differential equations}, Springer, Berlin (1971)

\bibitem{chasman}  L.M.\ Chasman, {\it An isoperimetric inequality for fundamental tones of free plates with nonzero Poisson's ratio},
Applicable Analysis, (online, DOI: 10.1080/00036811.2015.1068299) (2015)

\bibitem{chicone} C.\ Chicone, {\em Ordinary differential equations with applications}, Texts in Applied Mathematics 34 (2nd Ed.), Springer, New York (2006)

\bibitem{como} M.\ Como, S.\ Del Ferraro, A.\ Grimaldi, {\it A parametric analysis of the flutter instability for long span suspension bridges},
Wind and Structures 8 (2005), 1-12

\bibitem{delfour}
M.C.\ Delfour, J.P.\ Zol\'esio, Shapes and geometries. Analysis, differential calculus, and optimization. Advances in Design and Control, 4.
Society for Industrial and Applied Mathematics (SIAM), Philadelphia, PA, 2001

\bibitem{fergaz}
A.\ Ferrero, F.\ Gazzola, {\it A partially hinged rectangular plate as a model for suspension bridges}, Disc.\ Cont.\ Dyn.\ Syst.\ A.\ 35 (2015), 5879-5908

\bibitem{truss}
F.\ Gazzola, {\it Hexagonal design for stiffening trusses}, Ann. Mat. Pura Appl. 194 (2015), 87-108

\bibitem{bookgaz}
F.\ Gazzola, {\it Mathematical models for suspension bridges. Nonlinear structural instability}, MS\&A Vol.\,15, Springer Cham, 2015

\bibitem{hhs} M.\ Haberland, S.\ Hass, U.\ Starossek, {\it Robustness assessment of suspension bridges}, $6^{th}$ International Conference on Bridge
Maintenance, Safety and Management, IABMAS12, Stresa, Biondini \& Frangopol (Editors), Taylor \& Francis Group, London (2012), 1617-1624

\bibitem{henrot}
A.\ Henrot, Extremum problems for eigenvalues of elliptic operators. Frontiers in Mathematics. Birkh\"auser Verlag, Basel, 2006

\bibitem{henrotoudet} A.\ Henrot, E.\ Oudet, {\em Minimizing the second eigenvalue of the Laplace operator with Dirichlet boundary conditions},
Arch. Rat. Mech. Anal. 169 (2003), 73-87

\bibitem{henrotpierre}
A.\ Henrot, M.\ Pierre, {\it Variation et optimisation de formes, une analyse g\'eom\'etrique}, Math\'ematiques et Applications 48, 2005

\bibitem{henry}
D.\ Henry, Perturbation of the boundary in boundary-value problems of partial differential equations. With editorial assistance from Jack Hale and Ant\^onio Luiz Pereira. London Mathematical Society Lecture Note Series, 318. Cambridge University Press, Cambridge, 2005

\bibitem{herrmann} G.\ Herrmann, W.\ Hauger, {\it On the interrelation of divergence, flutter and auto-parametric resonance}, Ingenieur-Archiv 42
(1973), 81-88

\bibitem{holmes} J.D.\ Holmes, {\it Wind loading of structures}, (2nd Ed.) Taylor \& Francis, London \& New York, 2007

\bibitem{irvine} H.M.\ Irvine, {\it Cable Structures}, MIT Press Series in Structural Mechanics, Massachusetts, 1981

\bibitem{jenkins} A. Jenkins, {\it Self-oscillation}, Physics Reports 525 (2013), 167-222

\bibitem{jhnm} J.A.\ Jurado, S.\ Hern\'andez, F.\ Nieto, A.\ Mosquera, {\it Bridge aeroelasticity: sensitivity analysis and optimum design (high performance
structures and materials)}, WIT Press - Computational Mechanics, Southampton, 2011

\bibitem{ksw} B.\ Kawohl, J.\ Stara, G.\ Wittum, {\em Analysis and numerical studies of a problem of shape design}, Arch. Rat. Mech. Anal. 114 (1991), 349-363

\bibitem{lacarbonara} W.\ Lacarbonara, {\em Nonlinear structural mechanics. Theory, dynamical phenomena and modeling}, Springer New York, 2013

\bibitem{lala2004}
P.D.\ Lamberti, M.\ Lanza de Cristoforis, {\it A real analyticity result for symmetric functions of the eigenvalues of a domain
dependent Dirichlet problem for the Laplace operator}, J.\ Nonlinear Convex Anal., 5 (2004), no.\ 1, 19-42

\bibitem{larsen} A.\ Larsen, {\em Aerodynamics of the Tacoma Narrows Bridge - 60 years later}, Struct. Eng. Internat. 4 (2000), 243-248

\bibitem{matsumoto} M.\ Matsumoto, H.\ Matsumiya, S.\ Fujiwara, Y.\ Ito, {\em New consideration on flutter properties based on step-by-step analysis},
J. Wind Engineering Indust. Aerodynamics 98 (2010), 429-437

\bibitem{michell} A.G.M.\ Michell, {\em The limits of economy of material in framed structures}, Philosophical Magazine Series 6, Vol.\ 8,
Issue 47 (1904), 589-597

\bibitem{haer} National Park Service, {\em Trusses: a study by the historical American engineering record}, Society for Industrial Archeology,
{\tt http://www.nps.gov/hdp/samples/HAER/truss\%20poster.pdf} (1976)

\bibitem{navier} C.L.\ Navier, {\it Extraits des recherches sur la flexion des plans \'elastiques}, Bulletin des Sciences de la Soci\'et\'e
Philomathique de Paris (1823), 92-102

\bibitem{oudet} E.\ Oudet, {\it  Numerical minimization of eigenmodes of a membrane with respect to the domain}, ESAIM COCV 10 (2004), 315-335

\bibitem{rocard}
Y.\ Rocard, {\it Dynamic instability: automobiles, aircraft, suspension bridges}, Crosby Lockwood, London, 1957

\bibitem{SVM} J.A.\ Sanders, F.\ Verhulst, J.\ Murdock, {\it Averaging methods in nonlinear dynamical systems}, 2nd Ed. Applied Mathematical
Sciences 59, Springer, New York (2007)

\bibitem{scanlan} R.H.\ Scanlan, {\it Developments in low-speed aeroelasticity in the civil engineering field}, AIAA Journal 20 (1982), 839-844

\bibitem{scantom} R.H.\ Scanlan, J.J.\ Tomko, {\it Airfoil and bridge deck flutter derivatives}, J. Eng. Mech. 97 (1971), 1717-1737

\bibitem{scott} R.\ Scott, {\em In the wake of Tacoma. Suspension bridges and the quest for aerodynamic stability}, ASCE Press (2001)

\bibitem{selberg} A.\ Selberg, {\it Oscillation and aerodynamic instability of suspension bridges}, Acta Polytechnica Scandinavica,
Civil Engineering and Construction Series 13, Trondheim, 1961

\bibitem{tac2} F.C.\ Smith, G.S. Vincent, {\it Aerodynamic stability of suspension bridges: with special reference to the {T}acoma {N}arrows {B}ridge,
{P}art {II}: {M}athematical analysis}, Investigation conducted by the Structural Research Laboratory, University of Washington,
University of Washington Press, Seattle, 1950

\bibitem{sok} J.\ Sokolowski, J.P.\ Zol\'esio, {\it Introduction to shape optimization}, Springer Series in Computational Mathematics 16 (1992)

\bibitem{verh} F.\ Verhulst, {\em Perturbation analysis of parametric resonance}, In: Encyclopedia of Complexity and Systems Science,
Springer (2009), pp. 6625-6639

\bibitem{aizhai} Wikipedia, The Free Encylopedia, {\tt https://en.wikipedia.org/wiki/Aizhai\_Bridge}

\bibitem{xant} P.P.\ Xanthakos, {\it Theory and design of bridges}, John Wiley \& Sons, New York (1994)

\bibitem{tacoma} YouTube, {\em Tacoma Narrows Bridge collapse}, {\tt http://www.youtube.com/watch?v=3mclp9QmCGs} (1940)

\bibitem{youtube} YouTube, {\em Airfoil flutter, aricraft flutter, bridge flutter}, {\tt https://www.youtube.com/watch?v=72cQgXw7\_kI\,,
https://www.youtube.com/watch?v=iTFZNrTYp3k\,, https://www.youtube.com/watch?v=1Oq8HuB7\_tI}

\end{thebibliography}
\end{document}